\theoremstyle{plain}
\newtheorem{teo}{Theorem}[section]
\newtheorem{coro}[teo]{Corollary}
\newtheorem{lem}[teo]{Lemma}
\newtheorem{pro}[teo]{Proposition}
\newtheorem{defn}[teo]{Definition}
\theoremstyle{definition}
\newtheorem{oss}[teo]{Remark}
\newcommand{\B}{\mathbb{B}}
\newcommand{\HH}{\mathbb{H}}
\newcommand{\C}{\mathbb{C}}
\newcommand{\rr}{\mathbb{R}}
\newcommand{\s}{\mathbb{S}}
\newcommand{\Hi}{\mathcal{H}}
\newcommand{\de}{\partial_c}
\newcommand{\p}{\partial}
\DeclareMathOperator{\IIm}{Im}
\DeclareMathOperator{\length}{length}
\DeclareMathOperator{\RRe}{Re}
\newcommand{\III}{\mathcal{I}}
\newcommand{\RR}{\mathbb{R}}
\newcommand{\BB}{\mathbb{B}}
\newcommand{\II}{\mathbb{I}}
\renewcommand{\SS}{\mathbb{S}}
\title{\bf Invariant metrics for the quaternionic Hardy space}
\author{Nicola Arcozzi\thanks{Partially supported by the PRIN project Real and Complex Manifolds of the Italian MIUR and by INDAM-GNAMPA},
Giulia Sarfatti\thanks{Partially supported by INDAM-GNSAGA and by the PRIN project Real and Complex Manifolds of the Italian MIUR}}
\date{}
\begin{document}

\maketitle

\begin{abstract}
We find Riemannian metrics on the unit ball of the quaternions, which are naturally associated with reproducing kernel Hilbert spaces. We study the metric arising 
from the Hardy space in detail. 
We show that, 
in contrast to the one-complex variable case, no Riemannian metric is invariant under regular self-maps of the quaternionic ball.
\\{\noindent \scriptsize \sc Key words and
phrases:} {\scriptsize{\textsf {Hardy space on the quaternionic ball; functions of a quaternionic variable; invariant Riemannian metric.}}}\\{\scriptsize\sc{\noindent Mathematics
Subject Classification:}}\,{\scriptsize \,30G35, 46E22, 58B20.}
\end{abstract}

\bigskip


{
\small
\noindent \bf Notation. \it The symbol $\HH$ denotes the set of the quaternions $q=x_0+x_1i+x_2j+x_3k=\RRe(q)+\IIm(q)$, with $\RRe(q)=x_0$ and $\IIm(q)=x_1i+x_2j+x_3k$;  
where the $x_j$'s are real numbers and the imaginary units
$i,j,k$ are subject to the rules $ij=k,\ jk=i,\ ki=j$ and $i^2=j^2=k^2=-1$. We identify the quaternions $q$ whose imaginary part vanishes, $\IIm(q)=0$, 
with real numbers, $\RRe(q)\in\RR$; 
and, similarly, we let $\II=\RR i+\RR j+\RR k$ be the set of the imaginary quaternions. The norm $|q|\ge0$ of $q$ is 
$|q|=\sqrt{\sum_{l=0}^3 x_l^2}=(q\overline{q})^{1/2}$, where $\overline{q}=x_0-x_1i-x_2j-x_3k$ is the conjugate of $q$. 
The open unit ball $\BB$ in $\HH$ contains the quaternions $q$ such that $|q|<1$. The boundary of $\BB$ in $\HH$
is denoted by $\partial\BB$. By the symbol $\SS$ we denote the unit sphere of the imaginary quaternions: $q\in\II$ belongs to $\SS$ if $|q|=1$. For $I$ in $\s$, the slice 
$L_I=L_{-I}$ in $\HH$ contains all quaternions having the form $q=x+yI$, with $x,y$ in $\RR$.
If $f$ is a real differentiable function on a domain $\Omega \subseteq \HH$, we denote its real differential at a point $w\in\Omega$ by the symbol $f_*[w]$. 
\rm
}

\section{Introduction}

Let $\HH$ be the skew-field of the quaternions.
The quaternionic Hardy space $H^2(\BB)$ consists of the formal power series of the quaternionic variable $q$,
\begin{equation*}
 f(q)=\sum_{n=0}^\infty q^na_n,
\end{equation*}
such that the sequence of quaternions $\{a_n\}$ satisfies
\begin{equation}\label{htwo}
\|f\|_{H^2(\BB)}:= \left(\sum_{n=0}^\infty \left|a_n\right|^2\right)^{1/2}<\infty.
\end{equation}
It is easily verified that the series converges to a function $f:\BB=\{q\in\HH:\ |q|<1\}\to\HH$. The function $f$ is \it slice regular \rm \cite{GSAdvances} in the sense of Gentili and Struppa,
who developed a version of complex function theory which holds in the quaternionic setting. See the monograph \cite{libroGSS} for a detailed account of the theory. 
The norm (\ref{htwo}) can be polarized to obtain an inner product with values in the quaternions,
\begin{equation*} 
\left\langle\sum q^na_n,\sum q^nb_n\right\rangle_{H^2(\BB)}:=\sum_{n=0}^\infty\overline{b_n}a_n.
\end{equation*}
The space $H^2(\BB)$ is a reproducing kernel Hilbert space, in the quaternionic sense: for $w$ in $\BB$ and $f$ in $H^2(\BB)$ we have 
\begin{equation*} 
f(w)= \left\langle f,k_w\right\rangle_{H^2(\BB)}, \text{ where } k_w(q)=k(w,q)=\sum_{n=0}^\infty q^n\overline{w}^n.
\end{equation*}
There is a rich interplay between reproducing kernel Hilbert spaces and distance functions. See \cite{distance} for an overview and several examples from 
one-variable holomorphic function space theory. In \cite{cowen} the connection between metric theory and operator theory is analyzed at a very deep level, and the case of 
the Hardy space is a model example of that.  The seminal article \cite{aron} by Aronszajn is still an excellent introduction to the theory of reproducing kernel Hilbert spaces.

In this article we are mainly interested in studying metrics on $\BB$ which are associated with the function space $H^2(\BB)$. We also provide evidence that the 
metric properties of the space 
reflect the  behavior of functions in $H^2(\BB)$. The first metric we consider measures the distance between projections of kernel functions 
in the unit sphere of the Hilbert space $H^2(\BB)$:
\begin{equation}\label{campane}
\delta(p,q):=\sqrt{1-\left|
\left\langle 
\frac{k_q}{\|k_q\|_{H^2(\BB)}},\frac{k_p}{\|k_p\|_{H^2(\BB)}}
\right\rangle_{H^2(\BB)}
\right|^2}.
\end{equation}
In the holomorphic case of $H^2(\Delta)$ one obtains this way the pseudo-hyperbolic metric $\delta^\prime(z,w)=\left|\frac{z-w}{1-\overline{w}z}\right|$.  
A calculation, see Proposition \ref{pseudo} below, gives a formally similar result in the quaternionic case:
\begin{equation*} 
 \delta(z,w)=|(1-q\overline{w})^{-*}*(q-w)|_{|_{q=z}},
\end{equation*}
for $z,w$ in $\BB$. Here, the product $f(q)*g(q)$ and the multiplicative inverse $f(q)^{-*}$ are not pointwise product and pointwise inverse: they are $*$-product and
$*$-inverse, which are defined so that the usual convolution rule for coefficients of power series' products holds. See \cite{libroGSS}, and Section  \ref{arimatea} where we
summarize some background material on slice regular functions.

The infinitesimal version of the pseudo-hyperbolic metric in the complex disc, is the hyperbolic metric in the Riemann-Beltrami-Poincar\'e disc model: 
$ds^2=\frac{|dz|^2}{(1-|z|^2)^2}$. By infinitesimal version of a distance $\delta_0$, we mean the length metric associated with $\delta_0$ (see e.g. \cite{gromov}).
The infinitesimal metric associated with $\delta$ is a Riemannian metric $g$ on $\BB$. In Theorem \ref{metricadiagonale} a formula is produced, which works for a wide class of
reproducing kernel quaternionic Hilbert spaces. Here is the special case of the quaternionic Hardy space.
\begin{teo}\label{riemannian}
{\bf(I) } The length metric associated with (\ref{campane}) is the Riemannian metric $g$ defined below.

For any $w\in \B$, let us identify the tangent space $T_w\B$ with $\HH$. For any vector $d\in T_w\B$, where $w=x+yI_w$ lies in $L_{I_w}$, we decompose
$d=d_1+d_2$ with $d_1$ in $L_{I_w}$ and $d_2$ in $L_{I_w}^\perp$, the orthogonal complement of $L_{I_w}$ with respect to the Euclidean metric in $\HH$. 
The length of $d$ with respect to $g$ is:
\begin{equation}\label{metricacartesianaone}
 |d|_{g(w)}^2=\frac{1}{(1-|w|^2)^2}|d_1|^2+\frac{1}{|1-w^2|^2}|d_2|^2.
\end{equation}

\noindent {\bf(II)} The isometry group of $g$ is the one generated by the following classes of self-maps of $\BB$:
\begin{itemize}
 \item[(a)] regular M\"obius transformations of the form 
 $$
 q\mapsto M_\lambda(q)=(1-q\lambda)^{-*}*(q-\lambda)=\frac{q-\lambda}{1-\lambda q},
 $$
 with $\lambda$ in $(-1,1)$;
 \item[(b)] isometries of the sphere of the imaginary units, 
 $$
 q=x+yI\mapsto T_A(q)=x+yA(I),
 $$
 where $A:\SS\to\SS$ is an isometry of  $\, \SS$;
 \item[(c)] the reflection in the imaginary hyperplane,
 $$
 q\mapsto R(q)=-\overline{q}.
 $$
\end{itemize}
\end{teo}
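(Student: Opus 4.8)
The plan is to read the infinitesimal metric directly off $\delta$. By Proposition \ref{pseudo} we have $\delta(z,w)=|M_w(z)|$ with $M_w(q)=(1-q\overline{w})^{-*}*(q-w)$ and $M_w(w)=0$, so for $w=x+yI_w$ and a tangent vector $d$,
\[
|d|_{g(w)}=\lim_{t\to 0}\frac{\delta(w+td,w)}{t}=\bigl|(M_w)_*[w](d)\bigr|,
\]
the Euclidean norm of the real differential of $M_w$ at $w$; equivalently this is the specialization to the Hardy kernel $k(w,q)=\sum_n q^n\overline{w}^n$ of the general formula of Theorem \ref{metricadiagonale}, obtained by expanding $\delta^2=1-|\langle \hat k_z,\hat k_w\rangle|^2$ to second order in $t$. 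I then split $d=d_1+d_2$ with $d_1\in L_{I_w}$ and $d_2\in L_{I_w}^\perp$ and treat the two pieces separately. For $d_1$ along the slice, the coefficients of $1-q\overline{w}$ and $q-w$ all lie in the commutative slice $L_{I_w}$, so there the $*$-product and $*$-inverse collapse to the pointwise ones and $M_w|_{L_{I_w}}$ is the classical Blaschke factor $z\mapsto (z-w)(1-\overline{w}z)^{-1}$, whose derivative at $w$ has modulus $(1-|w|^2)^{-1}$. This produces the first term of (\ref{metricacartesianaone}).

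The orthogonal contribution is the crux. For $d_2\in L_{I_w}^\perp$ one must expand the $*$-product through its evaluation rule, using the symmetrization $(1-qw)*(1-q\overline{w})=1-2\RRe(w)q+|w|^2q^2$ (which has real coefficients, hence is central), and track the noncommutative terms arising from $I_wJ=-JI_w$ for $J\perp I_w$. Carrying this out --- most transparently at an imaginary point $w=yI$, then transporting to general $w$ by the symmetries of Part (II) --- one finds that $(M_w)_*[w]$ scales Euclidean length on $L_{I_w}^\perp$ by exactly $|1-w^2|^{-1}$, giving the second term. As checks, at real $w$ one has $|1-w^2|=1-|w|^2$, so the two factors coincide and $g$ is a scalar multiple of the Euclidean metric, while at $w=0$ both factors equal $1$ and $g(0)$ is Euclidean. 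This noncommutative expansion is the main obstacle in (I).

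\textbf{Part (II).} I first verify that (a), (b), (c) are isometries. The maps $T_A$ and $R$ preserve $|w|$, the slice $L_I$ through $w$, the quantity $|1-w^2|$, and the Euclidean orthogonal splitting $d=d_1+d_2$; since (\ref{metricacartesianaone}) is assembled only from these data, $T_A^*g=g$ and $R^*g=g$. For $M_\lambda$ with $\lambda\in(-1,1)$ real, the efficient route is to show $M_\lambda$ preserves $\delta$ itself: being a \emph{real}-parameter regular Möbius transformation, $M_\lambda$ fixes every slice $L_I$ and acts on it as a disc automorphism, and the corresponding transformation law of the reproducing kernel --- the quaternionic analogue of the identity $\bigl|\tfrac{\varphi(z)-\varphi(w)}{1-\overline{\varphi(w)}\varphi(z)}\bigr|=\bigl|\tfrac{z-w}{1-\overline{w}z}\bigr|$ for disc automorphisms $\varphi$ --- leaves $|\langle\hat k_p,\hat k_q\rangle|$ unchanged. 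Passing to the infinitesimal metric gives $M_\lambda^*g=g$. That $\lambda$ must be real here is precisely the phenomenon underlying the nonexistence result advertised in the abstract.

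It remains to show these generate the whole isometry group. The key is that the two coefficients in (\ref{metricacartesianaone}) coincide exactly on the real segment $(-1,1)=\RR\cap\B$, since $|1-w^2|^2-(1-|w|^2)^2=4y^2$ vanishes only when $y=0$. This locus is isometrically distinguished --- singled out by the curvature tensor of $g$ as the set where the metric is infinitesimally isotropic --- so every isometry $\phi$ maps $(-1,1)$ to itself, and in particular $\phi(0)\in(-1,1)$. Composing with the isometry $M_{\phi(0)}$ of type (a), which sends $\phi(0)$ to $0$, I may assume $\phi$ fixes $0$. Then $\phi_*[0]$ preserves the Euclidean $g(0)$, so $\phi_*[0]\in O(4)$; preserving moreover the tangent line to $(-1,1)$ at $0$, it preserves the splitting $\HH=\RR\oplus\II$ and hence lies in $O(1)\times O(3)$. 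Since the maps $T_A$ (realizing $\{1\}\times O(3)$) together with $R$ (realizing the factor $-1$ on $\RR$) exhaust every element of $O(1)\times O(3)$ as a $1$-jet at $0$, and a Riemannian isometry of a connected real-analytic manifold is determined by its value and differential at one point, $\phi$ coincides with the matching product of maps of types (b) and (c); undoing the initial $M_{\phi(0)}$ exhibits $\phi$ as a product of (a), (b), (c). The sharp obstacle here is proving rigorously both that the coincidence locus is \emph{exactly} $(-1,1)$ as an isometry invariant and that no orthogonal transformation at $0$ outside $O(1)\times O(3)$ can be the $1$-jet of a global isometry.
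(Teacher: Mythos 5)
Your outline diverges from the paper's in both parts, but it has a genuine gap at the crux of Part (II), and Part (I) is left incomplete at exactly the step you yourself call ``the main obstacle.'' For Part (I), your route via $|d|_{g(w)}=\lim_{t\to0}\delta(w+td,w)/t=|(M_w)_*[w](d)|$ is viable in principle --- the paper itself remarks, right after Proposition \ref{pseudo}, that by \cite{bisistop} the differential $(M_w)_*$ acts on $L_{I_w}$ by right multiplication by $(1-|w|^2)^{-1}$ and on $L_{I_w}^{\perp}$ by $(1-\overline{w}^2)^{-1}$, which is precisely your claim --- but you never carry out the advertised noncommutative expansion, and your plan to verify it only at $w=yI$ and then ``transport to general $w$ by the symmetries of Part (II)'' is circular as written: at that stage neither the formula for $g$ nor the isometry property of those maps is established. (It is repairable: $\delta$-invariance of $M_\lambda$ for real $\lambda$ follows from the fact that composition with a slice preserving regular map preserves regularity together with Stoppato's classification of regular Möbius transformations, and the translates of the imaginary disc under $\{M_\lambda\}$ cover $\BB$ as in Remark \ref{foliation}; but you assert rather than prove the needed invariance.) The paper instead derives \eqref{metricacartesianaone} from the general Theorem \ref{metricadiagonale} --- a second-order expansion of $\delta^2_{\Hi}$ diagonalized via the slice preserving kernel hypothesis and the decomposition $f_*[w](d)=d_1\de f(w)+d_2\p_s f(w)$ --- plus five explicit Hardy-kernel computations, each reducible to the complex plane, so no noncommutative expansion of the $*$-product is ever needed.

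The serious gap is in Part (II): your only justification that every isometry preserves $\RR\cap\BB$ is that this locus is ``singled out by the curvature tensor of $g$ as the set where the metric is infinitesimally isotropic.'' That is not an argument. Pointwise, every Riemannian metric is a Euclidean inner product, and the coincidence of the two coefficients in \eqref{metricacartesianaone} is a statement about the particular frame adapted to the decomposition $L_{I_w}\oplus L_{I_w}^{\perp}$, which is not a priori an isometry invariant; no curvature computation is supplied, and ``isotropy'' is a misnomer here since even at $0$ the stabilizer only realizes $O(1)\times O(3)$ on $T_0\BB$ (Theorem \ref{maancheno} shows full isotropy is impossible anywhere). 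The paper fills exactly this hole with substantial work: Lemmas \ref{capodanno}, \ref{C(J)}, \ref{M} and Corollary \ref{D(C(J))} produce totally geodesic submanifolds; the warped-product expression \eqref{malanotteno} realizes $D(\pi/2,\mathcal{C}(J))$ as a surface of revolution whose Euler--Lagrange analysis yields Lemma \ref{inj} (injectivity radius infinite at $0$, finite at every other point of $D$); and the foliation of Remark \ref{foliation} by Möbius translates then gives Lemma \ref{natale}, i.e.\ the invariance of the real diameter. You flag this as ``the sharp obstacle'' but do not overcome it, so the classification is unproved as submitted. Granting that lemma, your endgame is actually correct and \emph{shorter} than the paper's: reducing to $\phi(0)=0$, noting $\phi_*[0]\in O(1)\times O(3)$ is exhausted by the $1$-jets of $T_A$ and $R$, and invoking $1$-jet rigidity of isometries bypasses the paper's intermediate steps (showing $\Phi$ fixes $\B(\pi/2)$ via the distance asymptotics of Lemma \ref{amaranto}); the paper's longer route has the side benefit of establishing geometric facts it reuses, but your shortcut is sound --- it is only the entry point, the isometric distinguishability of the real diameter, that is missing.
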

In the metric \eqref{metricacartesianaone}, the first, ``large''  summand is the hyperbolic metric on a slice, while the second ``small'' summand is peculiarly quaternionic: it measures 
the variation of a quaternionic Hardy function in the direction orthogonal to the slices. Its small size reflects in quantitative, geometric terms the fact that regular functions are
affine in the $\SS$ variable, see \cite{libroGSS}.

The special r\^ole of the real axis in the theory of slice regular functions is here reflected in the fact that all isometries of the metric $g$ fix $\RR\cap\BB$. In particular,
contrary to the case of the complex disc, the action is not transitive. Other, more precise, properties of the metric will be stated and proved on route to the proof of
Theorem \ref{riemannian}. We will study geodesics, geodesically complete submanifolds and other geometric properties of the metric. For instance, we will prove that
the radius of injectivity is infinite at points of the real diameter of $\BB$, and finite elsewhere. The metric has neither positive, nor negative sectional curvature.

The proof of Theorem \ref{riemannian} is split into two steps. In Theorem \ref{metricadiagonale} we will compute the Riemannian metric associated with rather general 
reproducing kernel quaternionic Hilbert spaces $\Hi$; that is the length metric associated with the distance function
\begin{equation*} 
\delta_{\Hi}(w,z)=\sqrt{1-\left|
\left\langle 
\frac{k_w}{\|k_w\|_{\mathcal H}},\frac{k_z}{\|k_z\|_{\mathcal H}}
\right\rangle_{\mathcal H}\right|^2}.
\end{equation*}
We restrict most of our analysis to spaces of functions defined on symmetric slice domanis in $\HH$, with a slice preserving reproducing kernel. Examples are the Hardy space
on $\BB$ and on the right half-space $\HH^+=\{q\in \HH \, | \, \RRe(q)>0 \}$ and the Bergman space on $\BB$. The classification of the isometries is carried out in Section \ref{nonvaria}.

The Riemannian metric $g$ has a rather small group of isometries, compared with the state of 
things in the unit disc of the complex plane, or even in the unit ball in several complex variables, with the 
Bergman-Kobayashi metric. The latter metrics have a transitive group of isometries and, more, the space is isotropic; whereas all isometries of the former
have to fix the real line. One might expect that something better is possible. Unfortunately, there is no Riemannian metric on $\BB$ 
which is invariant under \it regular \rm M\"obius functions and which is ``democratic'' with respect to the sphere of imaginary units. If a geometric invariant for slice 
regular functions on the quaternionic ball exists, it has to be other than a Riemannian metric.
\begin{teo}\label{maancheno}
There is no Riemannian metric $m$ on $\BB$ having as isometries:
\begin{itemize}
 \item[(i)] regular M\"obius transformations $q\mapsto(1-q\overline{a})^{-*}*(q-a)u$, with $a$ in $\BB$ and $|u|=1$;
 \item[(ii)] maps of the form $q=x+yI\mapsto x+yA(I)$, with $A$ in $O(3)$, the orthogonal group of $\RR^3$.
\end{itemize}
\end{teo}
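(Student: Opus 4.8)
The plan is to show that the isometry requirements over-determine $m$, forcing it to be the conformal Poincaré metric $\frac{c}{(1-|q|^2)^2}\langle\cdot,\cdot\rangle$ on $\BB\subseteq\RR^4$, and then to exhibit a single regular M\"obius map that fails to preserve it. Throughout write $\langle X,Y\rangle=\RRe(\overline X Y)$ for the Euclidean product on $\HH\cong T_q\BB$. First I would pin down $m$ at the origin. The maps $R_u:q\mapsto qu$ with $|u|=1$ are exactly the class (i) transformations with $a=0$, and they fix $0$; hence $m(0)$ is an inner product invariant under right multiplication by every unit quaternion. Writing $m(0)(X,Y)=\langle SX,Y\rangle$ with $S$ Euclidean-symmetric and positive, invariance forces $S$ to commute with all $R_u$, hence with every right multiplication. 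Since the commutant of the right multiplications in $\mathrm{End}_\RR(\HH)$ is the algebra of left multiplications $\{L_p\}$, and $L_p$ is Euclidean-symmetric only when $p\in\RR$, we conclude $S=c\,\mathrm{Id}$, i.e. $m(0)=c\langle\cdot,\cdot\rangle$ for some $c>0$.

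Next I would propagate this value across $\BB$. The real M\"obius map $\mu_{s}(q)=(q-s)(1-sq)^{-1}$ (class (i), $u=1$) satisfies $\mu_{-t}(0)=t$ and has scalar differential $(\mu_{-t})_*[0]=(1-t^2)\mathrm{Id}$, so invariance gives $m(t)=\frac{c}{(1-t^2)^2}\langle\cdot,\cdot\rangle$ at every real $t\in(-1,1)$. Since any $q\in\BB$ can be written $q=tu$ with $t=|q|$ and $u=q/|q|$ a unit quaternion, applying the isometry $R_u$ (whose differential is the Euclidean-orthogonal map $X\mapsto Xu$) transports this to $m(q)=\frac{c}{(1-|q|^2)^2}\langle\cdot,\cdot\rangle$ for all $q$. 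Thus the only candidate is the conformal hyperbolic metric; note this is consistent with the real M\"obius maps and the $R_u$, which are genuinely conformal, so any obstruction must come from a non-slice-preserving map.

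Finally I would produce the contradiction using a regular M\"obius transformation with purely imaginary parameter, $M_{bi}(q)=(1-q\overline{bi})^{-*}*(q-bi)$ with $b\in(0,1)$, which sends $bi\mapsto0$. Expanding the $*$-inverse and $*$-product as power series yields $M_{bi}(q)=-bi+(1-b^2)\sum_{m\ge1}q^m(-bi)^{m-1}$; differentiating term by term via $D(q^m)[q](h)=\sum_{j=0}^{m-1}q^jhq^{m-1-j}$ at the point $q=bi$, and separating the directions that commute with $bi$ (the slice $L_i$) from those that anticommute with it (the transverse directions $j,k$), gives the two distinct scalings $(M_{bi})_*[bi]:\ 1\mapsto(1-b^2)^{-1},\ j\mapsto(1+b^2)^{-1}j$. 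Hence $M_{bi}$ matches $m(bi)=\frac{c}{(1-b^2)^2}\langle\cdot,\cdot\rangle$ against $m(0)=c\langle\cdot,\cdot\rangle$ correctly in the slice direction but is off by the factor $(1-b^2)^2/(1+b^2)^2\ne1$ in a transverse direction, so it cannot be an isometry. This contradiction shows no such $m$ exists.

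The main obstacle is the last step: one must compute the real differential of a genuinely quaternionic (non-slice-preserving) regular M\"obius transformation and detect its anisotropy between slice and transverse directions. This is precisely the mechanism behind the two different coefficients in \eqref{metricacartesianaone}, and the same $*$-product differentiation used to derive that formula drives the present contradiction. I also note that class (ii) is in fact not needed: the transformations in (i) already over-determine the metric.
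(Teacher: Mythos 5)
Your proposal is correct, and it reaches the conclusion by a genuinely different route than the paper. The paper uses both families (i) and (ii) to show that the isometry group acts transitively on orthonormal frames, hence that $(\BB,m)$ has constant curvature; it then identifies $m$ with the Poincar\'e metric of the four-ball (totally geodesic slices, geodesic radii, metric spheres, Gauss Lemma) and concludes by citing Bisi--Stoppato (\cite{bisistop}, Remark 5), who proved that regular M\"obius maps do not preserve the Poincar\'e metric. You instead determine the metric by linear algebra alone: the commutant argument at the origin is sound (the unit right multiplications span all $R_p$, the commutant of the right regular representation is the algebra of left multiplications $L_p$, and $L_p$ is Euclidean-symmetric precisely for real $p$), the differential of the real M\"obius map at $0$ is indeed the scalar $(1-t^2)\mathrm{Id}$, and conjugating by the orthogonal maps $R_u$ propagates the value to $m(q)=c\,(1-|q|^2)^{-2}\langle\cdot,\cdot\rangle$ at every point. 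Your final computation is also right: $(M_{bi})_*[bi]$ scales $L_i$ by $(1-b^2)^{-1}$ and $L_i^{\perp}$ by $(1+b^2)^{-1}$, in agreement with the formula of \cite{bisistop} quoted after Proposition \ref{pseudo} (note $(1-\overline w^2)^{-1}=(1+b^2)^{-1}$ for $w=bi$), and this anisotropy is incompatible with a conformal metric, which would require the single factor $(1-b^2)^{-1}$ in all directions. What your route buys: it avoids the Riemannian machinery (frame transitivity, the constant-curvature classification, the identification with the Poincar\'e model) and makes the final contradiction self-contained rather than an external citation; moreover, your closing observation is a genuine strengthening of the statement --- hypothesis (ii) is never used, so the maps in (i) alone admit no invariant Riemannian metric, whereas the paper's frame-transitivity step makes essential use of the maps $T_A$ from (ii). One cosmetic point: slice regular power series carry coefficients on the right, so the expansion should be written $M_{bi}(q)=-bi+\sum_{m\ge1}q^m(-bi)^{m-1}(1-b^2)$; since $1-b^2$ is real, your version is equivalent.
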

The proof will be given in Subsection \ref{incontri}. 
We mention here that Bisi and Gentili proved in \cite{cinzia} that the usual Poincar\'e metric on $\BB$ is invariant under classical (non-regular) M\"obius maps.

A first relationship between the space $H^2(\BB)$ and the metric $g$ concerns the $H^2$ norm itself.
Let $r\SS^3$ be the sphere of radius $0<r<1$ in $\BB$, with respect to the usual Euclidean metric; containing quaternions $q=re^{tI}$, with $I$ in $\SS$ and $t$ in $[0,\pi]$.
The restriction of $g$ to  $r\SS^3$  induces a volume form $dVol_{r\s^3}$. Let $f$  be in $H^2(\BB)$. Then,
\begin{equation*} 
  \|f\|_{H^2(\BB)}^2=\lim_{r\to1}(1-r^2)\frac{1}{Vol_{r\s^3}(r\s^3)}\int_{r\s^3}|f|^2 dVol_{r\s^3},
\end{equation*}
a relation similar to the definition of the Hardy norm in the unit disc by means of the Poincar\'e metric.

In Section \ref{destra} we use the Caley map $C:q\mapsto(1-q)^{-1}(1+q)$ to write down the metric $g$ in coordinates living in right half-space 
$\HH^+:=C(\BB)=\{q\in \HH \, | \, \RRe(q)>0\}$. This makes it easier to
prove a bilateral estimate for the distance function associated with $g$, Theorem \ref{bilaterale}. As an application, in Theorem \ref{castelguelfo} we
further investigate the ``rigidity'' of the metric $g$, by showing that the only inner functions which are Lipschitz continuous with respect to $g$
have to be be slice preserving.
In particular, they have to fix the real diameter of $\BB$.
 A function defined from $\Omega\subseteq\HH$ to $\HH$ is slice preserving if it maps $L_I\cap\Omega$ to $L_I$ for all $I$ in $\SS$. 

We also consider in Subsection \ref{hardypiatto} four equivalent definitions of the Hardy space $H^2(\HH^+)$ on $\HH^+$. 
First, functions $f$ in $H^2(\HH^+)$
might be characterized, pretty much as in the one-dimensional complex case, as inverse Fourier transforms -in the quaternionic sense- 
of functions $F:[0,\infty)\to\HH$ with finite $L^2$-norm
$$
\|F\|_{L^2}=\left(\int_0^\infty |F(\zeta)|^2d\zeta\right)^{1/2}.
$$
Equivalently, $H^2(\HH^+)$ is the Hilbert space having reproducing kernel $k_w(q)=(q+\overline{w})^{-*}$. This second viewpoint has the advantage of relating
$H^2(\HH^+)$ and $H^2(\BB)$. We show in fact in Proposition \ref{donatini} that the reproducing kernel for $H^2(\HH^+)$ 
is a \it rescaling \rm of the reproducing kernel for $H^2(\BB)$:
$$
k_{H^2(\B)}(C^{-1}(w),C^{-1}(z))=\frac{1}{2}(1+z)k_{H^2(\HH^+)}(w,z)(1+\overline w).
$$
Here we use the symbols $k_{H^2(\BB)}$ and $k_{H^2(\HH^+)}$ for the reproducing kernels on $\BB$ and $\HH^+$, respectively.
Hence, third, the functions in $H^2(\HH^+)$ might be defined as the rescaled versions of functions in $H^2(\BB)$. 
Fourth, the norm of $f$ in $H^2(\HH^+)$  can be computed as the limit of the integrals of $|f|^2$ on ``horocycles'' in $H^2(\HH^+)$, when these are endowed with the 
natural volume form induced by the metric $g$.
The space $H^2(\HH^+)$ was defined in  \cite{semispazio}, using a fifth definition, which is shown to give rise 
to the same reproducing kernel. Our contribution here is mainly relating 
$H^2(\HH^+)$ and the geometry of $\HH^+$.

\section{Preliminaries}\label{arimatea}

We recall the definition of slice regularity, together with some basic results that hold for this class of functions. 
We refer to the book \cite{libroGSS} for all details and proofs.
Let $\HH$ denote the four-dimensional (non-commutative) real algebra of quaternions and 
let $\s$ denote the two-dimensional sphere of imaginary units of $\HH$, $\s=\{q\in \HH \, | \, q^2=-1\}$. 
One can ``slice'' the space $\HH$ in copies of the complex plane that intersect along the real axis,  
\[ \HH=\bigcup_{I\in \s}(\rr+\rr I),  \hskip 1 cm \rr=\bigcap_{I\in \s}(\rr+\rr I),\]
where $L_I:=\rr+\rr I\cong \C$, for any $I\in\s$.  
Then, each element $q\in \HH$ 
can be expressed as $q=x+yI_q$, where $x,y$ are real (if $q\in\rr$, then $y=0$) and $I_q$ is an imaginary unit.  
Let $\Omega\subseteq \HH$ be a subset  of $\HH$. For any $I\in \s$, we will denote by $\Omega_I$ the intersection $\Omega \cap L_I$.
We can now recall the definition of slice regular functions, in the sequel simply called {\em regular} functions.
\begin{defn}
Let $\Omega$ be a domain (open connected subset) in $\HH$. 
A function $f:\Omega \to \HH$ is said to be {\em (slice) regular} if for any $I\in\s$ the restriction $f_I$ of $f$ to $\Omega_I$
has continuous partial derivatives and it is such that
\[\overline{\p}_If_I(x+yI)=\frac{1}{2}\left(\frac{\p}{\p x}+I\frac{\p}{\p y}\right)f_I(x+yI)=0\]
for all $x+yI\in \Omega_I$.
\end{defn}
\noindent A wide class of examples of regular functions is given by power series with quaternionic coefficients of the form $\sum^{\infty}_{n= 0}q^na_n$
which converge in open balls centered at the origin. 
\begin{teo}
A function $f$ is regular on $B(0,R)=\{q\in\HH \, | \, |q|<R\}$ if and only if $f$ has a power series expansion
$f(q)=\sum^{\infty}_{n= 0}q^na_n$ converging in $B(0,R)$.
\end{teo}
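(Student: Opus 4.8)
The plan is to prove the two implications separately, treating the forward direction (power series $\Rightarrow$ regular) as a direct termwise computation and reserving the real work for the converse. First I would handle the easy direction: assume $f(q)=\sum_{n=0}^\infty q^na_n$ converges on $B(0,R)$, fix $I\in\s$, and restrict to the slice $L_I\cong\C$, writing $q=x+yI$. On a single monomial, since $I$ commutes with every element of $L_I$, one computes $\frac12(\p_x+I\p_y)(x+yI)^n=\frac{n}{2}\bigl[(x+yI)^{n-1}+I(x+yI)^{n-1}I\bigr]=\frac{n}{2}(1+I^2)(x+yI)^{n-1}=0$, so each term of the restriction $f_I$ satisfies $\overline{\p}_I f_I=0$. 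Because a power series converges uniformly on compact subsets of its ball of convergence, I may differentiate termwise, so $f_I$ has continuous partial derivatives and $\overline{\p}_I f_I=0$ on $\Delta_I:=B(0,R)\cap L_I$ for every $I\in\s$; that is, $f$ is regular.

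For the converse, suppose $f$ is regular on $B(0,R)$. First I would fix $I\in\s$ and pick $J\in\s$ with $J\perp I$, so $\{1,I,J,IJ\}$ is an orthonormal basis of $\HH$. Grouping the real components of the restriction $f_I$ gives a splitting $f_I=F+GJ$ with $F,G\colon\Delta_I\to L_I$, and the equation $\overline{\p}_I f_I=0$ then separates, along the orthogonal summands $L_I$ and $L_IJ$, into the two equations $\overline{\p}_I F=0$ and $\overline{\p}_I G=0$. Hence $F$ and $G$ are honest holomorphic $L_I\cong\C$-valued functions on the disc $\Delta_I$ of radius $R$, and classical Cauchy theory yields convergent expansions $F(x+yI)=\sum_{n}(x+yI)^n\alpha_n$ and $G(x+yI)=\sum_{n}(x+yI)^n\beta_n$ with $\alpha_n,\beta_n\in L_I$. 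Setting $a_n:=\alpha_n+\beta_n J\in\HH$ produces the candidate coefficients, giving $f_I(x+yI)=\sum_{n}(x+yI)^n a_n$ on $\Delta_I$.

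It then remains to pass from this single slice to the whole ball. The holomorphy of $F$ and $G$ on the disc of radius $R$ forces $\limsup_n|\alpha_n|^{1/n}\le 1/R$ and likewise for $\beta_n$; since $|a_n|^2=|\alpha_n|^2+|\beta_n|^2$, I obtain $\limsup_n|a_n|^{1/n}\le 1/R$, so the quaternionic series $g(q):=\sum_{n=0}^\infty q^na_n$ converges on all of $B(0,R)$ and, by the first part, defines a regular function there. By construction $g=f$ on the entire disc $\Delta_I$. Since $B(0,R)$ is a symmetric slice domain and both $f$ and $g$ are regular, the Identity Principle for regular functions — two regular functions agreeing on a subset of one slice having an accumulation point coincide on the whole domain — forces $f\equiv g$ on $B(0,R)$, which is exactly the claimed expansion.

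The hard part is precisely this last step. The coefficients $a_n$ are extracted from the behavior of $f$ on one arbitrary slice $L_I$, and nothing in the pointwise definition of regularity immediately tells us that the same series reproduces $f$ on the other slices. What closes the gap is the rigidity special to slice regular functions: the affine dependence of $f$ on the imaginary unit, encoded in the Representation Formula, which is what underlies the Identity Principle invoked above. I would therefore expect the main obstacle to be organizing the argument so that the one-slice splitting and expansion are cleanly globalized by this principle, rather than any of the routine complex-analytic estimates.
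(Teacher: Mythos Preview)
The paper does not actually prove this theorem: it is stated in the Preliminaries section as background, with the explicit disclaimer that ``we refer to the book \cite{libroGSS} for all details and proofs.'' So there is no in-paper proof to compare your proposal against.

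That said, your argument is correct and is essentially the standard proof one finds in the literature (including \cite{libroGSS}): the termwise check that $\overline{\partial}_I(q^n a_n)=0$ for the forward direction, and for the converse the Splitting Lemma on a fixed slice $L_I$ to reduce to two classical holomorphic expansions, followed by the Identity Principle on the symmetric slice domain $B(0,R)$ to globalize. Your identification of the globalization step as the only nontrivial point is accurate, and your handling of the radius of convergence via $|a_n|^2=|\alpha_n|^2+|\beta_n|^2$ is the right bookkeeping. The only caution worth noting is a logical one about dependencies: you invoke the Identity Principle (equivalently, the Representation Formula) to pass from one slice to all of $B(0,R)$, so you should make sure that the version you cite is established independently of the power-series characterization you are proving; in the development of \cite{libroGSS} this is indeed the case, so there is no circularity.
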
 
For regular functions, it is possible to define an appropriate notion of derivative:
\begin{defn}
Let $f$ be a regular function on a domain $\Omega\subseteq \HH$. The {\em slice (or Cullen) derivative} of $f$ is the regular function defined as 
\[\de f(x+yI)=\frac{1}{2}\left(\frac{\p}{\p x} -I\frac{\p}{\p y}\right)f_I(x+yI).\]
\end{defn}
We will consider domains in certain restricted classes. 
\begin{defn}
Let $\Omega\subseteq \HH$ be a domain. 
\begin{enumerate}
\item $\Omega$ is called a {\em slice domain} if it intersects the real axis and if, for any $I\in \s$, $\Omega_I$ is a domain in $L_I$.
\item $\Omega$ is called a {\em symmetric domain} if for any point $x+yI\in \Omega$, with $x,y \in \rr$ and $I\in \s$, the entire two-sphere $x+y\s$ is contained in $\Omega$. 
\end{enumerate}
\end{defn}
\noindent The ball $\BB$ and the right half-space $\HH^+=\{q=x+Iy:\ I\in\SS,\ x>0,\ y\in\RR\}$ are symmetric slice domains.

Slice regular functions defined on symmetric slice domains have a peculiar property.
\begin{teo}[Representation Formula]\label{RF}
Let $f$ be a regular function on a symmetric slice domain $\Omega$ and let $x+y\s\subset \Omega$. Then, for any $I,J\in\s$,
\[f(x+yJ)=\frac{1}{2}[f(x+yI)+f(x-yI)]+J \frac I 2 [f(x-yI)-f(x+yI)].\]
In particular, there exist $b,c\in\HH$ such that $f(x+yJ)=b+Jc$ for any $J\in\s$.
\end{teo}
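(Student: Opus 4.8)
The plan is to fix an arbitrary $I\in\s$, build from the values of $f$ on the single slice $L_I$ an explicit candidate function $g$, and prove $g=f$; since the construction privileges no particular slice, the identity then holds for every $I$, and hence for every pair $I,J$. Concretely, for $q=x+yJ\in\Omega$ with $J\in\s$, I would define
\[
g(x+yJ)=\tfrac{1}{2}[f(x+yI)+f(x-yI)]+J\,\tfrac{I}{2}[f(x-yI)-f(x+yI)].
\]
Because $\Omega$ is symmetric, the two points $x\pm yI$ lie in $\Omega_I$ whenever $x+yJ\in\Omega$, so the right-hand side is well defined. I would first record that it is consistent with the alternative description $x+yJ=x+(-y)(-J)$ of the same point: replacing $(y,J)$ by $(-y,-J)$ leaves the symmetric first bracket unchanged and turns $(-J)\tfrac{I}{2}[f(x+yI)-f(x-yI)]$ into $J\tfrac{I}{2}[f(x-yI)-f(x+yI)]$, the original second term. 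The goal is then to show $g=f$ on all of $\Omega$.

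The key step is to verify that $g$ is regular on $\Omega$. Writing $\phi(x,y)=f(x+yI)$ and $\psi(x,y)=f(x-yI)$, the regularity of $f$ means $\overline{\p}_If_I=0$ on $\Omega_I$, which I would translate into $\p_x\phi=-I\,\p_y\phi$ and, after accounting for the sign reversal of the $y$-variable in $\psi$, into $\p_x\psi=I\,\p_y\psi$. Setting $b=\tfrac12(\phi+\psi)$ and $c=\tfrac{I}{2}(\psi-\phi)$, the restriction of $g$ to any slice $L_K$ is $g(x+yK)=b+Kc$, and the Cauchy--Riemann condition $\big(\p_x+K\p_y\big)(b+Kc)=0$ reduces, using $K^2=-1$, to the pair of identities $\p_xb=\p_yc$ and $\p_xc=-\p_yb$. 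Both follow by directly substituting the relations for $\phi$ and $\psi$ and applying $I^2=-1$. The $C^1$ regularity needed to speak of these derivatives is inherited from the holomorphy of $f_I$.

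Next I would check that $g$ and $f$ agree on the whole slice $\Omega_I$. Setting $J=I$ and using $I\cdot\tfrac{I}{2}=-\tfrac12$ collapses the defining formula to $g(x+yI)=f(x+yI)$, while $J=-I$ gives $g(x-yI)=f(x-yI)$; thus $g=f$ on $\Omega_I$. Since the sum (and hence the difference) of regular functions is regular, $f-g$ is regular on the slice domain $\Omega$ and vanishes on the entire slice $\Omega_I$, so by the Identity Principle for regular functions (see \cite{libroGSS}) we get $f-g\equiv0$ on $\Omega$. This is the displayed formula. For the final assertion, fixing $x,y,I$ makes $b=\tfrac12[f(x+yI)+f(x-yI)]$ and $c=\tfrac{I}{2}[f(x-yI)-f(x+yI)]$ constant quaternions, and the formula reads $f(x+yJ)=b+Jc$ for every $J\in\s$.

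The only delicate point I anticipate is the regularity verification for $g$: it hinges on differentiating $f(x-yI)$ correctly, where the orientation reversal flips the sign relating $\p_x\psi$ to $\p_y\psi$, and on carefully tracking the factors produced by $I^2=-1$ in the two Cauchy--Riemann identities. Everything else is bookkeeping, with the Identity Principle carrying out the genuine work of propagating agreement from a single slice to the whole symmetric slice domain.
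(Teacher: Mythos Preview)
Your argument is correct and is essentially the standard proof of the Representation Formula: extend $f_I$ to a candidate $g$ on $\Omega$ via the affine-in-$J$ formula, verify slice-by-slice Cauchy--Riemann equations for $g$ using the relations $\partial_x\phi=-I\partial_y\phi$ and $\partial_x\psi=I\partial_y\psi$, observe $g=f$ on $\Omega_I$, and invoke the Identity Principle. The computations you outline all check out.

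Note, however, that the paper does not supply its own proof of this statement. Theorem~\ref{RF} appears in the preliminaries section, where the authors explicitly write that they ``refer to the book \cite{libroGSS} for all details and proofs.'' So there is no in-paper argument to compare against; your proof is the one found (in essentially this form) in \cite{libroGSS} and in the original Gentili--Struppa papers. The only point worth tightening is the $C^1$ regularity of $g$ as a function on $\Omega$ (not merely slicewise), particularly across the real axis where the $(x,y,J)$ coordinates degenerate; this is routine but should be stated if you write the proof out in full.
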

When restricted to a sphere of the form $x+y\s$, a regular function  is actually affine in the variable $q$. This nice geometric property leads to the following definition
\begin{defn}
Let $f$ be a regular function on a symmetric slice domain $\Omega$. The {\em spherical derivative} of $f$ is defined as
\[\p_s f(q)=(q-\overline{q})^{-1}\left(f(q)-f(\overline{q})\right).\]
\end{defn} 
A basic result that establishes a relation between regular functions and holomorphic functions of one complex variable is the following.
\begin{lem}[Splitting Lemma]\label{split}
Let $f$ be a regular function on a slice domain $\Omega\subseteq \HH$. Then for any $I\in\s$ and for any $J\in \s$, $J\perp I$ there exist two holomorphic functions $F,G:\Omega_I\to L_I$ such that
\[f(x+yI)=F(x+yI)+G(x+yI)J\]
for any $x+yI\in\Omega_I$.
\end{lem}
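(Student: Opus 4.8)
The plan is to construct $F$ and $G$ by a fixed pointwise real-linear projection and then read off their holomorphicity directly from the regularity equation for $f$. First I would fix the algebraic setup. Setting $K:=IJ$, the four quaternions $1,I,J,K$ form an orthonormal real basis of $\HH$, and since $(c+dI)J=cJ+dK$ for real $c,d$, every quaternion decomposes uniquely as an element of $L_I$ plus an element of $L_IJ$; that is, $\HH=L_I\oplus L_IJ$ as a direct sum of real subspaces. Consequently, writing $z=x+yI$, the restriction $f_I$ admits a unique pointwise decomposition
\[
f_I(z)=F(z)+G(z)J,\qquad F(z),G(z)\in L_I.
\]
Because $F$ and $G$ are the images of $f_I$ under fixed real-linear maps (the projection onto $L_I$ for $F$, and the projection onto $L_IJ$ followed by right multiplication by $J^{-1}=-J$ for $G$), they inherit the continuous partial derivatives that $f_I$ possesses by the definition of regularity. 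It then remains only to show that $F$ and $G$ are holomorphic from $\Omega_I\subseteq L_I\cong\C$ to $L_I\cong\C$.

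The key point is that $L_I$ is commutative, so $I$ commutes with every $L_I$-valued function and with its partial derivatives, while $J$ is a right-hand constant. Applying the operator $\overline{\p}_I$ that defines regularity, I would compute
\[
\overline{\p}_If_I=\frac12\left(\frac{\p}{\p x}+I\frac{\p}{\p y}\right)(F+GJ)=\frac12\left(\frac{\p F}{\p x}+I\frac{\p F}{\p y}\right)+\frac12\left(\frac{\p G}{\p x}+I\frac{\p G}{\p y}\right)J,
\]
where in the second summand the identity $I\,\p_y(GJ)=(I\,\p_y G)J$ uses precisely the commutation of $I$ with the $L_I$-valued $\p_y G$ and the fact that $J$ is constant. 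The first summand lies in $L_I$ and the second in $L_IJ$.

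Finally I would invoke the directness of $\HH=L_I\oplus L_IJ$: the regularity hypothesis $\overline{\p}_If_I\equiv 0$ forces both components to vanish separately, giving
\[
\frac{\p F}{\p x}+I\frac{\p F}{\p y}=0,\qquad \frac{\p G}{\p x}+I\frac{\p G}{\p y}=0.
\]
These are exactly the Cauchy--Riemann equations for $F$ and $G$ viewed as maps $\Omega_I\to L_I$, once $I$ is identified with the imaginary unit of $\C$; hence $F$ and $G$ are holomorphic, as claimed. The argument is short, and its only delicate ingredient is the non-commutative bookkeeping in the middle display: everything works because $J$ sits on the right and $I$ commutes with $L_I$, so that $\overline{\p}_I$ preserves the splitting $L_I\oplus L_IJ$. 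Had one attempted a decomposition with $J$ on the left, this commutation would fail, which is why the stated form $F+GJ$ is the natural one to use.
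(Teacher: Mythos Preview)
Your proof is correct and is essentially the standard argument for the Splitting Lemma. Note that the paper does not actually prove this statement: it is listed among the preliminary facts in Section~\ref{arimatea} with the blanket reference ``We refer to the book \cite{libroGSS} for all details and proofs,'' and the proof you give is the one found there.
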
 
In general, the pointwise product of functions does not preserve slice regularity. 
It is possible to introduce a new multiplication operation, which, in the special case of power series, can be defined as follows.
\begin{defn}
Let $f(q)=\sum_{n=0}^{\infty}q^na_n,$ and $g(q)=\sum_{n=0}^{\infty}q^nb_n$ be regular functions on $B(0,R)$.
Their {\em regular product} (or {\em $*$-product}) is
\[
f*g(q)=\sum_{n\ge0}q^n \sum_{k=0}^{n}a_kb_{n-k},\]
regular on $B(0,R)$ as well.
\end{defn}
\noindent The $*$-product is related to the standard pointwise product by the following formula.
\begin{pro}\label{trasf}
Let $f,g$ be regular functions on a symmetric slice domain $\Omega$. Then
\[f*g(q)=\left\{\begin{array}{l r}
0 & \text{if $f(q)=0 $}\\
f(q)g(f(q)^{-1}qf(q)) & \text{if $f(q)\neq 0 $}
                \end{array}\right.
 \] 
\end{pro}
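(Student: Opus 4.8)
The plan is to prove the identity by a short computation with power series, turning the $*$-product into an ordinary product by means of a single conjugation. I write $f(q)=\sum_{k\ge0}q^ka_k$ and $g(q)=\sum_{m\ge0}q^mb_m$, convergent absolutely on the ball at issue, and I use only two elementary facts: powers of a fixed $q$ commute with one another, and conjugation by a fixed nonzero quaternion is multiplicative.

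First I would record an intermediate formula that needs no invertibility hypothesis, namely
\begin{equation*}
f*g(q)=\sum_{m\ge0}q^m\,f(q)\,b_m .
\end{equation*}
This is immediate from the definition: starting from $f*g(q)=\sum_{n\ge0}\sum_{k=0}^n q^n a_k b_{n-k}$ and substituting $m=n-k$, absolute convergence permits rearranging the double series into $\sum_{k,m\ge0}q^{k+m}a_k b_m$. Using $q^{k+m}=q^m q^k$ I factor this as $\sum_{m\ge0}q^m\bigl(\sum_{k\ge0}q^k a_k\bigr)b_m$. Crucially, no coefficient is commuted past a factor $a_k$, so the non-commutativity of $\HH$ plays no role. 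When $f(q)=0$ every term vanishes and we obtain $f*g(q)=0$, which is the first case of the statement.

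For $f(q)\neq0$ set $p=f(q)$ and $w=p^{-1}qp$. The key step is the multiplicativity of conjugation, $(p^{-1}qp)^m=p^{-1}q^m p$, which rearranges to $q^m p=p\,w^m$. Inserting this into the intermediate formula gives
\begin{equation*}
f*g(q)=\sum_{m\ge0}q^m p\, b_m=\sum_{m\ge0}p\,w^m b_m=p\sum_{m\ge0}w^m b_m=p\,g(w)=f(q)\,g\bigl(f(q)^{-1}qf(q)\bigr),
\end{equation*}
which is the second case.

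The only points requiring care are convergence and the meaning of $g(w)$; these, rather than the algebra, are where I expect the genuine work to sit. For the rearrangement of the double series I use absolute convergence on $B(0,R)$. To see that $g(w)$ is defined I note that conjugation preserves the real part and the imaginary norm: writing $q=x+yI$, one has $w=x+y(p^{-1}Ip)$ with $p^{-1}Ip\in\SS$, so $w$ lies on the sphere $x+y\SS$; in particular $|w|=|q|$ and, since the domain is symmetric, $w$ belongs to it and stays in the ball of convergence. For a general symmetric slice domain the same computation applies verbatim to the expansions of $f$ and $g$ about a real point $x_0\in\Omega$, where the shift $q-x_0$ is central and its $*$-powers coincide with its ordinary powers; the conjugation identity then reads $(q-x_0)^m p=p\,(w-x_0)^m$, and the argument goes through unchanged.
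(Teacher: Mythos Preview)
The paper does not actually prove this proposition: it is stated in the preliminaries section as background material, with the reader referred to the monograph \cite{libroGSS} for all details and proofs. There is therefore no paper proof to compare against.

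Your argument is correct and is essentially the standard one given in the cited reference. The intermediate identity $f*g(q)=\sum_{m\ge0}q^m f(q)\,b_m$, obtained by reindexing the convolution and using only that powers of $q$ commute among themselves, is exactly the key step; the conjugation trick $q^m p = p\,w^m$ with $w=p^{-1}qp$ then converts the $*$-product into a pointwise product. Your remarks on convergence and on why $w$ stays in the symmetric domain (because $w\in x+y\SS$ when $q=x+yI$) are the right justifications. The only place one might want a word more is the last paragraph: the extension from a ball to a general symmetric slice domain via expansion at a real center is legitimate, but it tacitly uses that regular functions on symmetric slice domains admit such expansions and that the $*$-product is compatible with them; these facts are in \cite{libroGSS} and you are entitled to quote them, but they are not as immediate as the power-series case.
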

\noindent The reciprocal $f^{-*}$ of a regular function $f$ with respect to the $*$-product can be defined. 
\begin{defn}\label{invstar}
Let  $f(q)=\sum_{n=0}^{\infty}q^na_n$ be a regular function on $B(0,R)$, $f\not \equiv 0$.
Its {\em regular reciprocal} is 
\[f^{-*}(q)=\frac{1}{f*f^c(q)}f^c(q),\]
where $f^c(q)=\sum_{n=0}^{\infty}q^n \overline{a}_n$. The function $f^{-*}$ is regular on $B(0,R) \setminus \{q\in B(0,R) \ | \ f*f^c(q)=0\}$ and $f*f^{-*}=1$ 
there.
\end{defn}
\noindent For example, in the case of the {\em reproducing kernel} for the quaternionic Hardy space $H^2(\B)$, we have
\begin{oss}
The {\em reproducing kernel} for $H^2(\B)$ is
\[k_{w}(q)=\sum_{n=0}^{\infty}q^n\overline{w}^n=(1-q\overline{w})^{-*}.\]
\end{oss}
Then we have a natural definition of \emph{regular quotients} of regular functions, which satisfy
\begin{pro}\label{Caterina} 
Let $f$ and $g$ be regular functions on a symmetric slice domain $\Omega$ and denote by $Z=\{q\in \Omega \ | \ f*f^c(q)=0\}$.  
If \  $T_f : \Omega \setminus Z \rightarrow  \Omega \setminus$ is defined as
\[T_{f}(q)=f^c(q)^{-1}qf^c(q),\] then 
\[f^{-*}*g(q)=f(T_{f}(q))^{-1}g(T_{f}(q)) \quad \text{for every} \quad q \in \Omega \setminus Z_{f^s}.\] 
\end{pro}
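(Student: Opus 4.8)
The plan is to reduce everything to the transformation formula of Proposition \ref{trasf}, applied twice, after isolating the single algebraic fact that makes the quaternionic bookkeeping collapse: the symmetrization $f^s:=f*f^c$ has \emph{real} coefficients. Indeed, writing $f(q)=\sum q^na_n$, the coefficients of $f*f^c$ are $c_n=\sum_{k=0}^n a_k\overline{a_{n-k}}$, and reindexing the sum shows $\overline{c_n}=c_n$. Consequently $f^s(q)=\sum q^nc_n$ lies in the slice $L_{I_q}$ and \emph{commutes with} $q$, and with every element of $L_{I_q}$; this is the fact I will use repeatedly. I also record that $f^s=f*f^c=f^c*f$, so that the same quaternion $f^s(q)$ arises from either order of multiplication. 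Throughout I work on $\Omega\setminus Z$, where $Z=\{q:\ f^s(q)=0\}=Z_{f^s}$: the identity $f^s(q)=f^c(q)\,f(T_f(q))$ established below forces $f^c(q)\ne0$ and $f(T_f(q))\ne0$ there, so all inverses that appear are legitimate, and $T_f(q)$, being a conjugate of $q$, lies on the sphere $\RRe(q)+|\IIm(q)|\,\SS\subset\Omega$, so that $T_f$ is genuinely well defined.

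First I rewrite the reciprocal. By Definition \ref{invstar}, $f^{-*}(q)=f^s(q)^{-1}f^c(q)$, the factor $f^s(q)^{-1}$ being the pointwise quaternionic inverse of the nonzero value $f^s(q)$. I then apply the transformation formula of Proposition \ref{trasf} to the product $f^{-*}*g$, whose first factor is nonzero on $\Omega\setminus Z$, obtaining
\[
f^{-*}*g(q)=f^{-*}(q)\,g\bigl(f^{-*}(q)^{-1}\,q\,f^{-*}(q)\bigr).
\]
The remaining work is to simplify the inner conjugation argument to $T_f(q)$ and the outer factor $f^{-*}(q)$ to $f(T_f(q))^{-1}$.

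For the conjugation argument, I use $f^{-*}(q)^{-1}=f^c(q)^{-1}f^s(q)$ to write
\[
f^{-*}(q)^{-1}\,q\,f^{-*}(q)=f^c(q)^{-1}\,f^s(q)\,q\,f^s(q)^{-1}\,f^c(q);
\]
since $f^s(q)$ commutes with $q$, the middle three factors reduce to $q$, leaving precisely $f^c(q)^{-1}q\,f^c(q)=T_f(q)$. For the outer factor I apply Proposition \ref{trasf} in the other direction, to the product $f^c*f$, whose value is $f^s$: as its first factor $f^c$ is nonzero off $Z$, the formula gives $f^s(q)=f^c(q)\,f\bigl(f^c(q)^{-1}q\,f^c(q)\bigr)=f^c(q)\,f(T_f(q))$, whence $f(T_f(q))^{-1}=f^s(q)^{-1}f^c(q)=f^{-*}(q)$. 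Substituting these two identities into the display above yields $f^{-*}*g(q)=f(T_f(q))^{-1}g(T_f(q))$, as claimed.

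I expect the only real obstacle to be conceptual rather than computational: recognizing that $f*f^c$ has real coefficients, so that $f^s(q)$ commutes with $q$, is exactly what turns the otherwise messy noncommutative conjugation into the clean identity $f^{-*}(q)^{-1}q\,f^{-*}(q)=T_f(q)$. Once that is in place, the double application of Proposition \ref{trasf}—once to $f^{-*}*g$ and once to $f^c*f$—is routine; the only care needed is to keep the quaternionic inverses in their correct non-commuting order and to verify, as above, that $f^c(q)$ and $f(T_f(q))$ are genuinely invertible on $\Omega\setminus Z_{f^s}$.
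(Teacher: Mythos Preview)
Your argument is correct. The paper itself does not supply a proof of this proposition: it is stated in the preliminaries section as background from \cite{libroGSS}, to which the reader is referred for ``all details and proofs.'' What you have written is essentially the standard argument from that reference: use that $f^s=f*f^c=f^c*f$ has real coefficients (hence commutes with $q$), apply Proposition~\ref{trasf} once to $f^{-*}*g$ and once to $f^c*f$, and observe that the conjugation by $f^{-*}(q)$ collapses to conjugation by $f^c(q)$. One small expository point: you say the identity $f^s(q)=f^c(q)f(T_f(q))$ ``forces $f^c(q)\ne0$''; more precisely, the nonvanishing of $f^c$ off $Z$ follows directly from the first branch of Proposition~\ref{trasf} applied to $f^c*f$ (if $f^c(q)=0$ then $f^s(q)=0$), after which the identity indeed gives $f(T_f(q))\ne0$. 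This is a matter of ordering, not a gap.
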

Important examples of regular quotients that will appear in the sequel are the {\em regular M\"obius transformations}, of the form
\[M_a(q)=(1-q\overline{a})^{-*}*(q-a),\]
where $a\in \B$, which are regular self-maps of the quaternionic unit ball $\B$. After multiplication on the right by unit-norm quaternions, they are the only
self-maps of $\BB$ which are regular, with regular inverse. They were introduced by Stoppato in \cite{stoppato}. See also \cite{libroGSS}.

\section{Metrics associated with quaternionic reproducing kernel Hilbert spaces}
Let $\Omega\subseteq \HH$ be a symmetric slice domain and let $\mathcal H$ be a reproducing kernel Hilbert space of regular functions on $\Omega$. 
For the definition and all basic results concerning quaternionic Hilbert spaces  see, e.g., \cite{ghilonimorettiperotti} and references therein. 
For the properties we are interested in, the same results hold in 
quaternion valued Hilbert spaces and complex valued Hilbert spaces, and the proofs are very similar.
It is possible to define a metric $\delta_{\Hi}$ on $\Omega$ in terms of the distance between projections of kernel functions in the unit sphere of the Hilbert space $\mathcal H$.
Namely, if  $k(w,q)=k_w(q)$ denotes the reproducing kernel of $\mathcal H$, then $\delta_{\Hi}:\Omega\times \Omega\to \rr^+$ can be defined as
\begin{equation}\label{deltaH}
\delta_{\Hi}(w,z)=\sqrt{1-\left|
\left\langle 
\frac{k_w}{\|k_w\|_{\mathcal H}},\frac{k_z}{\|k_z\|_{\mathcal H}}
\right\rangle_{\mathcal H}\right|^2}.
\end{equation}

\begin{pro}\label{metricaHfinita}
Let $\Omega$ be a symmetric slice domain and let $\mathcal H$ be a reproducing kernel Hilbert space of regular functions on $\Omega$. Let $w\in \Omega \cap L_{I_w}$ and let $d \in \HH$ be such that $w+d \in \Omega$. Consider the decomposition $d=d_1+d_2$, where $d_1\in L_{I_w}$ and $d_2\in L_{I_w}^{\perp}$.
Then 
\[\delta^2_{\Hi}(w, w+d)=\frac{\|k_w\|_{\Hi}^2\left\|\overline{d_1\de k_q(w)+d_2\p_s k_q(w)}\right\|_{\Hi}^2-\left|\left\langle k_w, \overline{d_1\de k_q(w)+d_2\p_s k_q(w)}\right\rangle_{\mathcal H}\right|^2}{\|k_w\|_{\mathcal H}^4}+O(|d|^2). \]
\end{pro}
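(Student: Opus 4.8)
The plan is to reduce the statement to a first-order expansion of the kernel map $w\mapsto k_w$ into $\Hi$, and then to perform a purely algebraic second-order expansion of the scalar quantity $\delta_{\Hi}^2$ defined in \eqref{deltaH}. Throughout, abbreviate $k=k_w$ and set $V_d:=\overline{d_1\de k_q(w)+d_2\p_s k_q(w)}$, regarded as a function of the variable $q$, i.e. as an element of $\Hi$. The target is to prove that $k_{w+d}=k+V_d+O(|d|^2)$ in the $\Hi$-norm, and that inserting this expansion into $\delta_{\Hi}^2$ makes the displayed fraction appear as the exact quadratic part.

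The key geometric input is a formula for the real differential of a regular function, valid at $w=x+yI_w$ with $y\neq0$: for $d=d_1+d_2$ with $d_1\in L_{I_w}$ and $d_2\in L_{I_w}^\perp$ one has $f_*[w](d)=d_1\,\de f(w)+d_2\,\p_s f(w)$. I would derive this by splitting the motion. Moving in the direction $d_1$ keeps $q(s)=w+sd_1$ inside the slice $L_{I_w}$, where $f$ restricts to a holomorphic (with respect to $I_w$) map whose complex derivative is $\de f$; reading off the first-order term of the slice power series gives $d_1\,\de f(w)$. Moving in the direction $d_2\perp L_{I_w}$ changes only the imaginary unit, $I(t)=I_w+t\,d_2/y+O(t^2)$ with $y$ unchanged to first order; feeding this into the Representation Formula $f(x+yJ)=\alpha(x,y)+J\beta(x,y)$ and using $\p_s f=\beta/y$ yields $d_2\,\p_s f(w)$. (The real case $y=0$ is degenerate, since $L_{I_w}^\perp$ is not defined there, and is handled by continuity.)

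Next I would apply this formula twice, to the kernel. Since each $k_q=k(q,\cdot)$ lies in $\Hi$ and is therefore regular in its argument, the differential formula gives $\tfrac{d}{dt}\big|_0 k_q(w+td)=d_1\,\de k_q(w)+d_2\,\p_s k_q(w)$. The reproducing property yields the Hermitian symmetry $k(w,q)=\langle k_w,k_q\rangle_\Hi=\overline{\langle k_q,k_w\rangle_\Hi}=\overline{k(q,w)}$, so $k_{w+td}(q)=\overline{k_q(w+td)}$; since $t$ is real, conjugation commutes with $\tfrac{d}{dt}$ and we obtain exactly $\tfrac{d}{dt}\big|_0 k_{w+td}=V_d$. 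This identifies $V_d$ as the differential of the $\Hi$-valued map $w\mapsto k_w$, and hence $k_{w+d}=k+V_d+W$ with $W=O(|d|^2)$ in $\Hi$. With this in hand the computation is mechanical: writing $A:=\langle k,V_d\rangle_\Hi$ and using $\langle g,f\rangle_\Hi=\overline{\langle f,g\rangle_\Hi}$ together with $\langle k,k\rangle_\Hi=\|k\|^2\in\RR$, one finds $\langle k,k_{w+d}\rangle_\Hi=\|k\|^2+A+O(|d|^2)$ and $\|k_{w+d}\|^2=\|k\|^2+2\RRe A+\|V_d\|^2+O(|d|^3)$. Forming $\|k\|^2\|k_{w+d}\|^2-|\langle k,k_{w+d}\rangle_\Hi|^2$, the constant and the first-order real parts cancel, the surviving quadratic term is $\|k\|^2\|V_d\|^2-|A|^2$, and dividing by $\|k\|^2\|k_{w+d}\|^2=\|k\|^4+O(|d|)$ produces the asserted fraction with remainder of order higher than $|d|^2$ (in particular $O(|d|^2)$ as stated).

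The main obstacle is the passage from the pointwise-in-$q$ first-order expansion to a genuine expansion in the $\Hi$-norm with a controlled quadratic remainder, i.e. justifying $\|k_{w+d}-k_w-V_d\|_\Hi=O(|d|^2)$; this is where some smoothness of the kernel (differentiability of $w\mapsto k_w$ as an $\Hi$-valued map) must be invoked or verified in the cases of interest. One way to sidestep most of this functional-analytic bookkeeping is to keep everything scalar via the reproducing property: $\langle k_w,k_{w+d}\rangle_\Hi=k_w(w+d)$ and $\|k_{w+d}\|^2=k_{w+d}(w+d)$, and $\langle k_w,V_d\rangle_\Hi=d_1\,\de k_w(w)+d_2\,\p_s k_w(w)$, so that the whole expansion of $\delta_{\Hi}^2$ reduces to Taylor-expanding the two-variable function $k(u,v)$ in $u,v$ near $(w,w)$ using only the regular-function differential formula of the second paragraph. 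The remaining care is purely notational: tracking the non-commutativity and the quaternion-valued inner product so that $A$ and $\bar A$, and the terms $\|V_d\|^2$ and $|A|^2$, are combined on the correct sides.
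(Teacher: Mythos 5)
Your proposal is correct and follows essentially the same route as the paper: you expand $k_{w+d}(q)=k_w(q)+\overline{d_1\de k_q(w)+d_2\p_s k_q(w)}+O(|d|^2)$ via the decomposition of the real differential of a regular function into slice and spherical derivatives (which the paper simply imports from Remark 8.15 of \cite{libroGSS}, while you re-derive it from the Representation Formula), and then perform the same second-order algebraic expansion of the numerator and denominator of $\delta^2_{\Hi}(w,w+d)$. The one point where you go beyond the paper---explicitly flagging that the pointwise-in-$q$ expansion must be upgraded to an expansion in the $\Hi$-norm, and offering the scalar sidestep through $\langle k_w,k_{w+d}\rangle_{\Hi}=k_w(w+d)$ and $\|k_{w+d}\|_{\Hi}^2=k_{w+d}(w+d)$---is a genuine gap in the paper's own write-up, which dispatches it with an appeal to real-analyticity of regular functions, so your version is if anything slightly more careful.
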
 
\begin{proof}

Recalling the definition  \eqref{deltaH} of $\delta_{\Hi}$, we get
\begin{equation}\label{deltaw+d}
\delta^2_{\Hi}(w,w+d)=\frac{\|k_w\|^2_{\mathcal H}\|k_{w+d}\|^2_{\mathcal H}-\left|
\left\langle 
k_w, k_{w+d}\right\rangle_{\mathcal H}\right|^2}{\|k_w\|^2_{\mathcal H}\|k_{w+d}\|^2_{\mathcal H}}.
\end{equation}
We want to have a better description of the numerator of \eqref{deltaw+d}.
Using the properties of the kernel functions and the fact that regular functions are real analytic functions of $4$ real variables, we can write
\[k_{w+d}(q)-k_w(q)=\overline{k_{q}(w+d)-k_q(w)}=\overline{(k_q)_{*}[w](d)}+O(|d|^2)\]
where $(k_q)_{*}[w](d)$ 
denotes the real differential of $k_q$ at the point $w$, applied to the vector $d$. We identify here the tangent space $T_{w}\Omega$ with $\HH$. 
Thanks to the decomposition properties of the real differential of regular functions in terms of slice and spherical derivatives, see Remark 8.15 in \cite{libroGSS}, we have
\[k_{w+d}(q)-k_w(q)=\overline{d_1\de k_q(w)+d_2\p_s k_q(w)}+O(|d|^2),\]
hence,
\[
\|k_{w+d}\|_{\Hi}^2= \|k_w\|_{\Hi}^2+\left\|\overline{d_1\de k_q(w)+d_2\p_s k_q(w)}\right\|_{\Hi}^2+2\RRe \left \langle k_w,\overline{d_1\de k_q(w)+d_2\p_s k_q(w)} \right\rangle_{\Hi}+O(|d|^2)
\]
and 
\begin{equation*}
\begin{aligned}
&\left|\left\langle k_w, k_{w+d}\right\rangle_{\mathcal H}\right|^2= \left|\|k_w\|_{\Hi}^2+\left\langle k_w, \overline{d_1\de k_q(w)+d_2\p_s k_q(w)}\right\rangle_{\mathcal H}\right|^2 +O(|d|^2)\\
&=\|k_w\|_{\Hi}^4+\left|\left\langle k_w, \overline{d_1\de k_q(w)+d_2\p_s k_q(w)}\right\rangle_{\mathcal H}\right|^2\\
& \hskip 6 cm 
+ 2\|k_w\|_{\Hi}^2\RRe\left\langle k_w, \overline{d_1\de k_q(w)+d_2\p_s k_q(w)}\right\rangle_{\mathcal H}+O(|d|^2).
\end{aligned}
\end{equation*}
Therefore
\[\delta^2_{\Hi}(w, w+d)=\frac{\|k_w\|_{\Hi}^2\left\|\overline{d_1\de k_q(w)+d_2\p_s k_q(w)}\right\|_{\Hi}^2-\left|\left\langle k_w, \overline{d_1\de k_q(w)+d_2\p_s k_q(w)}\right\rangle_{\mathcal H}\right|^2}{\|k_w\|_{\mathcal H}^4}+O(|d|^2). \]
\end{proof}
Proposition \ref{metricaHfinita} reflects what happens in the complex case, see \cite{McCarthy}. 
In fact the functions $\overline{\de k_q(w)}$ and $\overline{\p_s k_q(w)}$ are regular with respect to the variable $q$, and they reproduce respectively 
the slice and the spherical derivative of any regular function $f:\Omega \to \HH$. In fact, for any $w\in \Omega_{I_w}$, if  $h \in L_{I_w}$, we can write 
\begin{equation*}
\begin{aligned}
&\de f (w)
=\lim_{h\to 0,\, h\in L_{I_w}} h^{-1} (f(w+h)-f(w))=\lim_{h\to 0,\, h\in L_{I_w}}h^{-1}\left(\left\langle f, k_{w+h} \right\rangle_{\Hi}-\left\langle  f, k_{w} \right\rangle_{\Hi}\right)\\
&=\lim_{h\to 0,\, h\in L_{I_w}} h^{-1}\left\langle  f, \overline{k_q(w+h)-k_q(w)} \right\rangle_{\Hi} =\lim_{h\to 0,\, h\in L_{I_w}} h^{-1}\left\langle  f, \overline{h\de k_q(w)}\right\rangle_{\Hi}
=\left\langle f, \overline{\de k_q(w)} \right\rangle_{\Hi}
\end{aligned}
\end{equation*}
and
\begin{equation*}
\begin{aligned}
\p_s f(w)&=(w-\overline w)^{-1}(f(w)-f(\overline w))=(w-\overline w)^{-1}\left(\langle f,k_w\rangle_{\Hi}-\langle f, k_{\overline w} \rangle_{\Hi}\right)\\
&=(w-\overline w)^{-1} \left\langle f,k_w-k_{\overline w}\right \rangle_{\Hi}
=(w-\overline w)^{-1}\left\langle f, \overline{(w-\overline w)\p_s k_q(w)} \right\rangle_{\Hi}=\left\langle f, \overline{\p_s k_q(w)} \right\rangle_{\Hi}.
\end{aligned}
\end{equation*}

Proposition \ref{metricaHfinita} allows us to define a Riemannian metric $g_{\Hi}$ on the symmetric slice domain $\Omega$. 
For each point $w\in \Omega$, let us identify the tangent space $T_w\Omega$ with $\HH=L_{I_w}+ L_{I_w}^{\perp}$. Then the length of a tangent vector $d=d_1+d_2 \in L_{I_w}+ L_{I_w}^{\perp}$ is 
\begin{equation}\label{metricaH}
|d|^2_{{g_{\Hi}}(w)}=\frac{\|k_w\|_{\Hi}^2\left\|\overline{d_1\de k_q(w)+d_2\p_s k_q(w)}\right\|_{\Hi}^2-\left|\left\langle k_w, \overline{d_1\de k_q(w)+d_2\p_s k_q(w)}\right\rangle_{\mathcal H}\right|^2}{\|k_w\|_{\mathcal H}^4}.
\end{equation}

\begin{teo}\label{metricadiagonale}
Let $\Omega$ be a symmetric slice domain and let $\Hi$ be a reproducing kernel Hilbert space of regular functions on $\Omega$. 
Suppose that $k$ is slice preserving: for any $w\in \Omega$ the kernel function $k_w$ preserves the slice $L_{I_w}$ identified by $w$. 
Then the length of a tangent vector $d=d_1+d_2 \in L_{I_w}+ L_{I_w}^{\perp}\cong T_w\Omega$ with respect to the Riemannian metric $g_{\Hi}$ associated with $\Hi$ is given by
\[|d|^2_{{g_{\Hi}}(w)}=\frac{\left(\|k_w\|_{\Hi}^2\left\|\overline {\de k_q(w)}\right\|_{\Hi}^2-\left|\overline {\de k_w(w)}\right|^2\right)}{\|k_w\|_{\mathcal H}^4}|d_1|^2+\frac{
\left(\|k_w\|_{\Hi}^2\left\|\overline {\p_s k_q(w)}\right\|_{\Hi}^2-\left|\overline {\p_s k_w(w)}\right|^2\right)}{\|k_w\|_{\mathcal H}^4}|d_2|^2.\]
\end{teo}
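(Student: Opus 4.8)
The plan is to expand the numerator of \eqref{metricaH} and to show that the slice-preserving hypothesis forces every inner product occurring in it to lie in the single slice $L_{I_w}$, which makes the mixed $d_1$--$d_2$ terms drop out.

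First I would abbreviate $K:=k_w$, $A:=\overline{\de k_q(w)}$ and $B:=\overline{\p_s k_q(w)}$, regarded as elements of $\Hi$ (regular functions of $q$). Since conjugation reverses products, the vector appearing in \eqref{metricaH} is $\overline{d_1\de k_q(w)+d_2\p_s k_q(w)}=A\overline{d_1}+B\overline{d_2}$. Expanding $\|A\overline{d_1}+B\overline{d_2}\|_{\Hi}^2$ and $|\langle K,A\overline{d_1}+B\overline{d_2}\rangle_{\Hi}|^2$ with the convention $\langle f\lambda,g\mu\rangle_{\Hi}=\overline{\mu}\langle f,g\rangle_{\Hi}\lambda$, and using the reproducing identities $\langle K,A\rangle_{\Hi}=\de k_w(w)$, $\langle K,B\rangle_{\Hi}=\p_s k_w(w)$ obtained by setting $f=k_w$ in the formulas established just after Proposition \ref{metricaHfinita}, the numerator of \eqref{metricaH} becomes
\[
\begin{aligned}
&\bigl(\|K\|_{\Hi}^2\|A\|_{\Hi}^2-|\de k_w(w)|^2\bigr)|d_1|^2+\bigl(\|K\|_{\Hi}^2\|B\|_{\Hi}^2-|\p_s k_w(w)|^2\bigr)|d_2|^2\\
&\quad+2\RRe\bigl(\|K\|_{\Hi}^2\,d_2\langle A,B\rangle_{\Hi}\overline{d_1}\bigr)-2\RRe\bigl(d_1\langle K,A\rangle_{\Hi}\overline{\langle K,B\rangle_{\Hi}}\,\overline{d_2}\bigr).
\end{aligned}
\]
After division by $\|k_w\|_{\Hi}^4$ the first two terms are exactly the claimed diagonal expression, so everything reduces to showing that the last two (mixed) terms vanish.

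For the cancellation I would isolate the elementary fact that $\RRe(\lambda\eta)=\langle\lambda,\overline{\eta}\rangle_{\RR^4}=0$ whenever $\lambda\in L_{I_w}$ and $\eta\in L_{I_w}^{\perp}$, since $\overline{\eta}\in L_{I_w}^{\perp}$ is Euclidean-orthogonal to $\lambda$; together with the fact that $L_{I_w}\cong\CC$ is closed under products and conjugation. Granting that $\langle A,B\rangle_{\Hi}$, $\langle K,A\rangle_{\Hi}$ and $\langle K,B\rangle_{\Hi}$ all lie in $L_{I_w}$, in the first mixed term $\langle A,B\rangle_{\Hi}\overline{d_1}\in L_{I_w}$, and its real part against $d_2\in L_{I_w}^{\perp}$ vanishes (using $\RRe(ab)=\RRe(ba)$); likewise $d_1\langle K,A\rangle_{\Hi}\overline{\langle K,B\rangle_{\Hi}}\in L_{I_w}$ pairs to zero with $\overline{d_2}\in L_{I_w}^{\perp}$. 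This yields the diagonal form.

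The heart of the matter is therefore to prove that these three inner products belong to $L_{I_w}$, and this is where the slice-preserving hypothesis is used. I would first show that $A$ and $B$, as functions of $q$, map $L_{I_w}\cap\Omega$ into $L_{I_w}$: by the Hermitian symmetry $k(q,w')=\overline{k(w',q)}$ and the hypothesis that each $k_{w'}$ preserves $L_{I_{w'}}$, for $q,w'\in L_{I_w}$ the value $k_q(w')$ lies in $L_{I_w}$; since $\de$ and $\p_s$ are taken along $L_{I_w}$ and involve only $\partial_x,\partial_y$, multiplication by $I_w\in L_{I_w}$ and by $(w-\overline{w})^{-1}\in L_{I_w}$, the functions $q\mapsto\de k_q(w)$ and $q\mapsto\p_s k_q(w)$ send $L_{I_w}$ into $L_{I_w}$, and conjugation keeps them there. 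Then I would read the three inner products as derivatives at $w\in L_{I_w}$ via the reproducing formulas: $\langle K,A\rangle_{\Hi}=\de k_w(w)$ and $\langle K,B\rangle_{\Hi}=\p_s k_w(w)$ lie in $L_{I_w}$ because $k_w$ preserves $L_{I_w}$, while $\langle A,B\rangle_{\Hi}=\overline{\langle B,A\rangle_{\Hi}}=\overline{\de B(w)}$ lies in $L_{I_w}$ because $B$ preserves $L_{I_w}$ and $w\in L_{I_w}$. The main obstacle is precisely this last step: transferring the slice-preserving property from the kernel to the derivative-reproducing functions $A$ and $B$, and rewriting the abstract inner products as slice/spherical derivatives at $w$ so that their membership in $L_{I_w}$ becomes visible; once this is in place, the expansion and the cancellation are routine.
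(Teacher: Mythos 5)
Your proof is correct and follows essentially the same route as the paper's: expand the numerator of \eqref{metricaH}, reduce everything to the vanishing of the two mixed $d_1$--$d_2$ terms, and kill those by combining the slice-preserving hypothesis with the Euclidean orthogonality $\RRe(\lambda\eta)=0$ for $\lambda\in L_{I_w}$, $\eta\in L_{I_w}^{\perp}$. The only difference is one of detail: where the paper merely notes that $\de k_w$ and $\p_s k_w$ inherit the slice-preserving property, you spell out --- via Hermitian symmetry of the kernel and the derivative-reproducing formulas --- why the inner product $\left\langle \overline{\de k_q(w)},\overline{\p_s k_q(w)}\right\rangle_{\Hi}$ itself lies in $L_{I_w}$, a clarification the paper leaves implicit but the same argument.
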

\begin{proof}
We begin by working out the numerator in equation \eqref{metricaH}. We have:
\begin{equation*}
\begin{aligned}
&\left\|\overline{d_1\de k_q(w)+d_2\p_s k_q(w)}\right\|_{\Hi}^2=\left\langle \overline{d_1\de k_q(w)+d_2\p_s k_q(w)},\overline{d_1\de k_q(w)+d_2\p_s k_q(w)}\right \rangle_{\Hi}\\
&=\left\|\overline {\de k_q(w)}\right\|_{\Hi}^2|d_1|^2+\left\|\overline {\p_s k_q(w)}\right\|_{\Hi}^2|d_2|^2+2\RRe\left \langle \overline {d_1\de k_q(w)}, \overline {d_2\p_s k_q(w)}\right \rangle_{\Hi}
\end{aligned}
\end{equation*}
and
\begin{equation*}
\begin{aligned}
&\left|\left\langle k_w, \overline{d_1\de k_q(w)+d_2\p_s k_q(w)}\right\rangle_{\mathcal H}\right|^2=\left|\overline{d_1\de k_w(w)+d_2\p_s k_w(w)}\right|^2\\
&=\left|\overline {\de k_w(w)}\right|^2|d_1|^2+\left|\overline {\p_s k_w(w)}\right|^2|d_2|^2+2\RRe\left(\overline{d_1\de k_w(w)}d_2\p_sk_w(w)\right).
\end{aligned}
\end{equation*}
Hence we are left to prove that both 
\[\RRe\left \langle \overline {d_1\de k_q(w)}, \overline {d_2\p_s k_q(w)}\right \rangle_{\Hi}=\RRe \left(d_2\left \langle \overline {\de k_q(w)}, \overline {\p_s k_q(w)}\right \rangle_{\Hi}\overline{d_1}\right)\]
and 
\[\RRe\left(\overline{d_1\de k_w(w)}d_2\p_sk_w(w)\right)=\RRe\left( d_2\p_sk_w(w)\overline{\de k_w(w)}\,\overline{d_1}\right)\]
equal zero. 
Now notice that if $k_w$ maps $\Omega_{I_w}$ to $L_{I_w}$, the same holds true for both $\de k_w$ and $\p_s k_w$. The fact that $d_1\in L_{I_w}$ and $d_2\in L_{I_w}^{\perp}$ leads us to conclude.   

\end{proof}
\noindent The hypothesis about kernel functions required in Theorem \ref{metricadiagonale} is  
satisfied by the quaternionic analogues of Hardy and Bergman spaces; see \cite{milanesi2,bergman}.

\section{Invariant metrics associated with the Hardy space $H^2(\B)$}\label{nonvaria}

In this section, we turn our attention to the special example of  the Hardy space $H^2(\B)$. 
We will study the corresponding Riemannian metric $g:=g_{H^2(\B)}$.  
Recalling that for any $w$ the kernel function $k_w(q)=\sum^{\infty}_{n= 0}q^n\overline w^n$ preserves the slice $L_{I_w}$, 
we can directly apply Theorem \ref{metricadiagonale} to find the expression of $g$, thus proving the first part of Theorem \ref{riemannian}. 
\begin{pro} 
For any $w\in \B$, let us identify the tangent space $T_w\B$ with $\HH$. For any vector $d\in T_w\B$, if $w$ lies in $L_{I_w}$ and we decompose
$d=d_1+d_2$ with $d_1$ in $L_{I_w}$ and $d_2$ in $L_{I_w}^\perp$, then the length of $d$ with respect to $g$ is given by
\begin{equation}\label{metricacartesiana}
 |d|_{g(w)}^2=\frac{1}{(1-|w|^2)^2}|d_1|^2+\frac{1}{|1-w^2|^2}|d_2|^2.
\end{equation}
\end{pro}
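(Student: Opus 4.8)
The plan is to apply the diagonal formula of Theorem \ref{metricadiagonale} directly to the Hardy kernel $k_w(q)=\sum_{n=0}^\infty q^n\overline w^n=(1-q\overline w)^{-*}$. This kernel is slice preserving: for $q\in L_{I_w}$ every monomial $q^n\overline w^n$ lies in $L_{I_w}$, so the hypothesis of Theorem \ref{metricadiagonale} holds and the metric is diagonal in the splitting $d=d_1+d_2$. It then suffices to evaluate the five scalar quantities entering the two coefficients: $\|k_w\|_{\Hi}^2$, $\|\overline{\de k_q(w)}\|_{\Hi}^2$, $\overline{\de k_w(w)}=\langle k_w,\overline{\de k_q(w)}\rangle_{\Hi}$, and the corresponding two spherical quantities $\|\overline{\p_s k_q(w)}\|_{\Hi}^2$, $\overline{\p_s k_w(w)}=\langle k_w,\overline{\p_s k_q(w)}\rangle_{\Hi}$. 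Each reduces to summing an explicit quaternionic power series, using throughout that $w$ and $\overline w$ commute, so that $w^n\overline w^n=|w|^{2n}$.

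For the normalization, the reproducing property gives $\|k_w\|_{\Hi}^2=k_w(w)=\sum_{n=0}^\infty|w|^{2n}=(1-|w|^2)^{-1}$, hence $\|k_w\|_{\Hi}^4=(1-|w|^2)^{-2}$. For the slice term I would first identify the reproducing function for $\de$: differentiating $k_q$ in the argument and conjugating gives $\overline{\de k_q(w)}=\sum_{n=1}^\infty n\,q^n\overline w^{\,n-1}$, whence $\|\overline{\de k_q(w)}\|_{\Hi}^2=\sum_{n\ge1}n^2|w|^{2(n-1)}=\frac{1+|w|^2}{(1-|w|^2)^3}$, while the pairing identity established before the theorem yields $\overline{\de k_w(w)}=\langle k_w,\overline{\de k_q(w)}\rangle_{\Hi}=\frac{\overline w}{(1-|w|^2)^2}$. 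Substituting into the first coefficient of Theorem \ref{metricadiagonale}, the numerator collapses as $\frac{1}{1-|w|^2}\cdot\frac{1+|w|^2}{(1-|w|^2)^3}-\frac{|w|^2}{(1-|w|^2)^4}=\frac{1}{(1-|w|^2)^4}$, and dividing by $\|k_w\|_{\Hi}^4$ produces exactly $\frac{1}{(1-|w|^2)^2}$, the desired coefficient of $|d_1|^2$.

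The spherical term is the part requiring real work, and is the step I expect to be the main obstacle. Starting from $\p_s^{(w)} k_q(w)=(w-\overline w)^{-1}\bigl(k_q(w)-k_q(\overline w)\bigr)$ and conjugating, one finds $\overline{\p_s k_q(w)}=\sum_{m=1}^\infty c_m\,q^m$ with the \emph{real} coefficients $c_m=\frac{w^m-\overline w^{\,m}}{w-\overline w}$. The two quantities needed are then $\|\overline{\p_s k_q(w)}\|_{\Hi}^2=\sum_{m\ge1}c_m^2$ and $\overline{\p_s k_w(w)}=\sum_{m\ge1}c_m\overline w^{\,m}$. Writing $c_m$ through $w^m-\overline w^{\,m}$ expands both sums into geometric series in $w^2$, $\overline w^{\,2}$ and $|w|^2$, so that $\sum_m w^{2m}=\frac{w^2}{1-w^2}$, $\sum_m\overline w^{\,2m}=\frac{\overline w^{\,2}}{1-\overline w^{\,2}}$ and $\sum_m|w|^{2m}=\frac{|w|^2}{1-|w|^2}$. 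Collecting these over the common denominator $(1-w^2)(1-\overline w^{\,2})=|1-w^2|^2$ and simplifying with $(w-\overline w)^2=-4|\IIm(w)|^2=-4(|w|^2-\RRe(w)^2)$, the second coefficient of Theorem \ref{metricadiagonale} should reduce to $\frac{1}{|1-w^2|^2}$. The delicate point is precisely this last algebraic cancellation: one must verify that after clearing $(w-\overline w)^2$ the factor $(1-|w|^2)^{-2}$ coming from $\|k_w\|_{\Hi}^4$ is exactly absorbed, leaving only $|1-w^2|^{-2}$; checking the identity on, e.g., a purely imaginary $w=yI$ (where $|1-w^2|^2=(1+y^2)^2$) is a useful sanity test before pushing the general manipulation through.
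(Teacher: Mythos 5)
Your proposal is correct and takes essentially the same route as the paper: both apply Theorem \ref{metricadiagonale} to the slice-preserving Hardy kernel and evaluate the same five quantities $\|k_w\|_{\Hi}^2$, $\|\overline{\de k_q(w)}\|_{\Hi}^2$, $|\overline{\de k_w(w)}|^2$, $\|\overline{\p_s k_q(w)}\|_{\Hi}^2$, $|\overline{\p_s k_w(w)}|^2$ by reducing them to one-complex-variable power-series sums, and the spherical cancellation you flag does close (one finds $\|\overline{\p_s k_q(w)}\|_{\Hi}^2=\frac{1+|w|^2}{(1-|w|^2)|1-w^2|^2}$, so the numerator collapses exactly as in the slice case). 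The only cosmetic slip is that $\overline{\de k_q(w)}\big|_{q=w}=\frac{w}{(1-|w|^2)^2}$ rather than $\frac{\overline w}{(1-|w|^2)^2}$, which is immaterial since only its modulus enters the formula.
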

\begin{proof}
The following equalities can, by their nature, be reduced to simple calculations in the complex plane:
\[\|k_w\|_{H^2(\B)}^2=\frac{1}{1-|w|^2}, \quad \left|\overline{\de k_w(w)}\right|^2=\frac{|w|^2}{(1-|w|^2)^4}, \quad \left|\overline{\p_s k_w(w)}\right|^2=\frac{|w|^2}{(1-|w|^2)^2|1-w^2|^2},\]
\[\left\|\overline{\de k_q(w)}\right\|_{H^2(\B)}^2=\Big \|\sum_{n\ge 0}n^2q^n\overline{w}^{n-1}\Big\|_{H^2(\B)}^2=\frac{1+|w|^2}{(1-|w|^2)^3},\]
\[\left\|\overline{\p_s k_q(w)}\right\|_{H^2(\B)}^2=\frac{1}{|w-\overline w|^2}\left(\frac{2}{1-|w|^2}-\frac{1}{1-w^2}-\frac{1}{1-\overline w^2}\right).\]
A direct application of Theorem \ref{metricadiagonale}, then, yields that, with respect to coordinates $(d_1,d_2)\in (L_{I_w},L_{I_w}^{\perp})$,   
\begin{equation*} 
|d|^2_{g(w)}=\frac{1}{(1-|w|^2)^2}|d_1|^2+\frac{1}{|1-w^2|^2}|d_2|^2.
\end{equation*}
\end{proof}
\noindent 
The volume form $dVol_g$ associated with the metric $g$ at any point $w=x_0+x_1i+x_2j+x_3k\in \B$ is then 
\[dVol_g(w)=\frac{dVol_{Euc}(w)}{(1-|w|^2)^{2}|1-w^2|^{2}},\]
where $dVol_{Euc}(w)=dx_0dx_1dx_2dx_3$ is the usual Euclidean volume element.

\begin{pro}\label{pseudo}
Let $\delta:=\delta_{H^2(\B)}$ be defined as in \eqref{deltaH}. For any $w,z\in \B$, $\delta(z,w)$ coincides both with the value at $z$ of the regular M\"obius transformation $M_w$ associated with $w$ and with the vaule at $w$ of the regular M\"obius transformation $M_z$ associated with $z$, namely   
\[\delta(w,z)=\left|(1-q\overline{z})^{-*}*(q-z)\right|_{|_{q=w}}=\left|(1-q\overline{w})^{-*}*(q-w)\right|_{|_{q=z}}.\]
\end{pro}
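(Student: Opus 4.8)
The plan is to reduce the whole statement to the scalar computation that, in the commutative setting, yields the classical pseudo-hyperbolic identity. First I would rewrite $\delta$ in terms of kernel evaluations. Since $\|k_w\|_{H^2(\B)}^2=k_w(w)=(1-|w|^2)^{-1}$ and $\langle k_w,k_z\rangle_{H^2(\B)}=k_w(z)$ by the reproducing property, and since the normalizing factors are positive reals, \eqref{deltaH} gives
\[\delta(w,z)^2=1-|k_w(z)|^2(1-|w|^2)(1-|z|^2).\]
Because $\overline{k_w(z)}=k_z(w)$, one has $|k_w(z)|=|k_z(w)|$, so the right-hand side is symmetric in $w,z$; together with the evident symmetry $\delta(w,z)=\delta(z,w)$ this shows that it suffices to prove $\delta(w,z)=|M_z(w)|$, where $M_z(q)=(1-q\overline z)^{-*}*(q-z)$. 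The second equality of the statement follows by exchanging the roles of $w$ and $z$.

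Next I would compute $|M_z(w)|$. Observing that $(1-q\overline z)^{-*}=k_z(q)$, I apply Proposition \ref{trasf} to the $*$-product $M_z=k_z*(q-z)$, obtaining
\[|M_z(w)|^2=|k_z(w)|^2\,|\hat w-z|^2,\qquad \hat w=k_z(w)^{-1}w\,k_z(w),\]
where $\hat w$ is conjugate to $w$. By Definition \ref{invstar}, $k_z(w)=N_z(w)^{-1}(1-wz)$ with $N_z(q)=1-2\RRe(z)q+|z|^2q^2$ slice preserving; since $N_z(w)$ and $w$ lie in the same slice $L_{I_w}$ they commute, whence $\hat w=(1-wz)^{-1}w(1-wz)$. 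A short manipulation gives $\hat w-z=(1-wz)^{-1}[(w-z)-w(w-z)z]$, and combining this with $|k_z(w)|^2=|1-wz|^2/|N_z(w)|^2$ the factors $|1-wz|^2$ cancel, leaving
\[|M_z(w)|^2=\frac{|(w-z)-w(w-z)z|^2}{|N_z(w)|^2}.\]

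Comparing the two displayed expressions, the proposition reduces to the purely algebraic identity
\[|N_z(w)|^2-|1-wz|^2(1-|w|^2)(1-|z|^2)=|(w-z)-w(w-z)z|^2.\]
I would verify this by expanding both sides into real scalars, using $\RRe(ab)=\RRe(ba)$, the centrality of $|w|^2,|z|^2,\RRe(w),\RRe(z)$, and the relation $\RRe(wz)+\RRe(w\overline z)=2\RRe(w)\RRe(z)$; both sides then become the same polynomial in the invariants $|w|^2,|z|^2,\RRe(w),\RRe(z),\RRe(wz)$. When $\IIm(w)$ and $\IIm(z)$ are parallel the quaternions $w,z$ commute and the identity collapses to the classical $|1-w\overline z|^2-(1-|w|^2)(1-|z|^2)=|w-z|^2$, which is a good consistency check. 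The main obstacle is exactly this non-commutative bookkeeping: keeping track of the order of factors in $|(w-z)-w(w-z)z|^2$ and in $|N_z(w)|^2$. This can be tamed by noting that the identity is invariant under the simultaneous rotation $(w,z)\mapsto(uwu^{-1},uzu^{-1})$ with $|u|=1$, so that one may assume $\IIm(z)$ and $\IIm(w)$ span a fixed coordinate $2$-plane and carry out the expansion there.
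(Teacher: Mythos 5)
Your reduction is correct, and the polynomial identity you arrive at is in fact true, but your route differs from the paper's in an instructive way. The paper also starts from $\delta^2(w,z)=1-|k_w(z)|^2(1-|w|^2)(1-|z|^2)$, but instead of unpacking the kernel via Definition \ref{invstar}, it applies Proposition \ref{Caterina} (the quotient analogue of Proposition \ref{trasf}) to both $k_w$ and $M_w$ at the evaluation point, so that $k_w(z)=(1-\hat z\overline w)^{-1}$ and $M_w(z)=(1-\hat z\overline w)^{-1}(\hat z-w)$ involve the \emph{same} conjugated point $\hat z=(1-zw)^{-1}z(1-zw)$. Keeping $\hat z$ abstract and using only $|\hat z|=|z|$, the whole proposition then reduces to $|1-a\overline b|^2-(1-|a|^2)(1-|b|^2)=|a-b|^2$, which holds verbatim for arbitrary, non-commuting quaternions $a,b$: indeed $(1-a\overline b)(1-b\overline a)-(1-|a|^2)(1-|b|^2)=|a|^2+|b|^2-a\overline b-b\overline a=(a-b)(\overline a-\overline b)$, the cross term $a\overline b\, b\overline a=|a|^2|b|^2$ being automatically real. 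So the ``non-commutative bookkeeping'' you identify as the main obstacle evaporates entirely in the paper's formulation. Your version instead eliminates the conjugated point in favor of the explicit polynomials $N_z(w)$ and $(w-z)-w(w-z)z$ (the latter is $(q-z)*(1-qz)$ evaluated at $w$); this buys a closed rational formula for $\delta$ in the original variables, but it forces you to verify the five-invariant identity $|N_z(w)|^2-|1-wz|^2(1-|w|^2)(1-|z|^2)=|(w-z)-w(w-z)z|^2$ by brute expansion, a step you only sketch. Your preparatory steps all check out (the commutation of $N_z(w)$ with $w$ inside $L_{I_w}$, the cancellation of $|1-wz|^2$, the reduction by conjugation invariance to a fixed coordinate $2$-plane), so the argument would go through once the expansion is actually performed; but be aware that the commutative consistency check alone cannot replace it, since commuting pairs realize only the boundary stratum $|\RRe(wz)-\RRe(w)\RRe(z)|=|\IIm w|\,|\IIm z|$ of the invariant variables, which is not dense, so the identity genuinely needs the full expansion (or the paper's conjugation trick, which renders it unnecessary).
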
 
\begin{proof}
Let $w,z$ be two points in $\B$.
By Proposition \ref{Caterina}, 
\[|\langle k_w, k_z \rangle_{H^2(\B)}|=|k_w(z)|=|(1-q\overline{w})^{-*}|_{|_{q=z}}=|1-\hat{z}\overline{w}|^{-1} \]
where
$\hat{z}=(1-zw)^{-1}z(1-zw)$, which implies
\[\left|\left\langle\frac{k_{w}}{\|k_w\|_{H^2(\B)}},\frac{k_{z}}{\|k_z\|_{H^2(\B)}} \right\rangle_{H^2(\B)}\right|^2=|1-\hat{z}\overline{w}|^{-2}\left(1-|w|^2\right)\left(1-|z|^2\right).\]
Thus, since $|\hat{z}|=|z|$, we get 
\begin{equation*}
\begin{aligned}
\delta^2(w,z)&=1-\left|\left\langle\frac{k_{w}}{\|k_w\|_{H^2(\B)}},\frac{k_{z}}{\|k_z\|_{H^2(\B)}} \right\rangle_{H^2(\B)}\right|^2\\
&=\left|1-\hat{z}\overline{w}\right|^{-2}\left(\left|1-\hat{z}\overline{w}\right|^2-\left(1-|w|^2\right)\left(1-|z|^2\right)\right)\\
&=\left|1-\hat{z}\overline{w}\right|^{-2}\left(\left(1-\hat{z}\overline{w}\right)\left(1-w\overline{\hat{z}}\right)-\left(1-w\overline{w}\right)\left(1-\hat{z}\overline{\hat{z}}\right)\right)
=\left|1-\hat{z}\overline{w}\right|^{-2}\left(\hat{z}-w\right)\left(\overline{\hat{z}}-\overline{w}\right)\\
&=\left|1-\hat{z}\overline{w}\right|^{-2}\left|\hat{z}-w\right|^2=\left|\left(1-\hat{z}\overline{w}\right)^{-1}\left(\hat{z}-w\right)\right|^2=\left|\left(1-q\overline{w}\right)^{-*}*(q-w)\right|_{|_{q=z}}^2
\end{aligned}
\end{equation*}
where the last equality follows from Proposition \ref{Caterina}.
\end{proof}

The previous relation between the metric $\delta$ (which is the finite version of the metric $g$)  and regular M\"obius transformations, is not unexpected. In fact, as studied in \cite{bisistop}, the real differential $(M_w)_*$ of the regular M\"obius map $M_{w}$ associated with a point $w\in \B_{I_w}$ acts on $L_{I_w}$ by right multiplication by $(1-|w|^2)^{-1}$ and on $L_{I_w}^{\perp}$ by right multiplication by $(1-\overline w^2)^{-1}$. 
Looking at equation \eqref{metricacartesiana},  we see that the coefficients of the metric $g$ at the point $w$ with respect to coordinates $(L_{I_w},L_{I_w}^{\perp})$ coincide in modulus with the components of $(M_w)_*$. Moreover, the fact that $g(w)$ measures vectors in $L_{I_w}$ by multiplying their Euclidean length by $\frac{1}{1-|w|^2}$ means that the restriction of $g$ to a slice $L_I$ 
is the classical Poincar\'e metric in the unit disc $\B_I$. 
%


Using spherical coordinates, $\B=\{re^{tI} \ | \ r\in[0,1), \ t\in[0,\pi], \ I\in \s \},$
%
if $q=re^{tI}$ and we decompose the lenght element
 $dq=dq_1+dq_2 \in L_{I_w}+L_{I_w}^{\perp}$, then, since $dI$ is orthogonal to $I$ (because $I$ is unitary) we have $|d_1|^2=dr^2+r^2dt^2$ and
$|d_2|^2=r^2\sin^2t|dI|^2$
where $|dI|$ denotes the usual two-dimensional sphere round metric on $\s\cong \s^2$. Therefore we get the expression of the metric tensor $ds^2_g$ associated with $g$ in spherical coordinates:
\begin{equation}\label{metricapolare}
ds_g^2=\frac{dr^2+r^2dt^2}{(1-r^2)^2}+\frac{r^2\sin^2 t|dI|^2}{(1-r^2)^2+4r^2\sin^2t}.
\end{equation}
That is, $g$ is a {\em warped product} of the hyperbolic metric $g_{hyp}$ on the complex unit disc with the 
standard round metric $g_{\s}$ on the two-dimensional sphere \cite{warped}.
 
\subsection{Isometries and geodesics of $(\B,g)$}

%
%
From the expression \eqref{metricacartesiana} of $g$, it is clear that three families of functions act isometrically on $(\B,g)$:
 \begin{itemize}
  \item[(a)] regular M\"obius transformation of the form 
  $$
  q\mapsto M_\lambda(q)=(1-q\lambda )^{-*}*(q-\lambda)=\frac{q-\lambda}{1-q\lambda },
  $$
  with $\lambda$ in $(-1,1)$;
  \item[(b)] isometries of the sphere of imaginary units, which in polar coordinates $r\ge0,t\in[0,\pi],I\in\s$ read as
  $$
  q=re^{tI}\mapsto T_A(q)=re^{tA(I)},
  $$
  where $A:\SS\to\SS$ is an isometry of $\SS$;
  \item[(c)] the reflection in the imaginary hyperplane,
  $$
  q\mapsto R(q)=-\overline{q}.
  $$
 \end{itemize}


\noindent Our goal is to prove the following classification result, thus proving the second part of Theorem \ref{riemannian}.
\begin{teo}\label{teoclassificazione}
The group $\Gamma$ of isometries of $(\B, g)$ is generated by maps of type $(a)$, $(b)$ and $(c)$. 
\end{teo}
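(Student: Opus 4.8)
The plan is to exploit the rigidity principle that an isometry of a connected Riemannian manifold is completely determined by its value and its differential at a single point. It therefore suffices to show two things: that the maps of type $(a)$, $(b)$, $(c)$ already realize every admissible $1$-jet at the origin, and that an arbitrary isometry can be normalized, after composition with such maps, to one fixing the origin with trivial differential. Since $g(0)$ is the Euclidean inner product on $\HH\cong T_0\B$ (set $w=0$ in \eqref{metricacartesiana}), the latter will force the normalized map to be the identity, and hence put $F$ in the group generated by $(a)$, $(b)$, $(c)$.

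The hard part will be the first step: showing that every isometry $F$ of $(\B,g)$ preserves the real diameter $\B\cap\RR$. My approach is to single out $\B\cap\RR$ by an intrinsic metric invariant, namely the injectivity radius, which (as recorded in the geometric study preceding this theorem) is infinite exactly at the points of $\B\cap\RR$ and finite elsewhere. Geometrically, at a non-real point the round two-spheres $\{re^{tI}:I\in\s\}$ carry, by \eqref{metricapolare}, a genuine non-degenerate warped metric and produce short geodesic loops and conjugate points that bound the injectivity radius; along $\B\cap\RR$ these fibres collapse ($\sin t=0$) and the obstruction disappears. As the injectivity radius is an isometry invariant, $F(\B\cap\RR)=\B\cap\RR$. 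Should a more self-contained argument be preferred, one can instead distinguish real from non-real points by the curvature invariants of $g$ computed from \eqref{metricapolare}; either way this is the only genuinely delicate point.

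Granting this, I would normalize $F$ as follows. The induced metric makes $\B\cap\RR$ a complete geodesic isometric to the hyperbolic line; on it the maps $M_\lambda$ of type $(a)$ with $\lambda\in(-1,1)$ act as the full translation group and the reflection $R$ of type $(c)$ acts as $x\mapsto-x$, so together they realize the whole isometry group of the line. Since $F(0)\in\B\cap\RR$, composing $F$ with $M_{F(0)}$ and, if necessary, with $R$, I obtain a map $\tilde F$ in the coset of $F$ modulo the group generated by $(a)$ and $(c)$ that fixes $\B\cap\RR$ pointwise; in particular $\tilde F(0)=0$ and $d\tilde F_0$ restricts to the identity on $\RR=T_0(\B\cap\RR)$. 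Because $g(0)$ is Euclidean, $d\tilde F_0$ is orthogonal and preserves the splitting $\HH=\RR\oplus\II$, whence $d\tilde F_0=\mathrm{id}_{\RR}\oplus A$ with $A\in O(3)$ acting on the imaginary part $\II$.

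Finally I would match this differential with a map of type $(b)$. The map $T_A\colon x+yI\mapsto x+yA(I)$ fixes real parts and acts by $A\in O(3)$ on imaginary parts, so $d(T_A)_0=\mathrm{id}_{\RR}\oplus A$. Hence $T_A^{-1}\circ\tilde F$ fixes $0$ with identity differential, and by the $1$-jet rigidity of isometries on the connected manifold $\B$ it is the identity. Thus $\tilde F=T_A$, and unwinding the composition shows that $F$ is a product of maps of types $(a)$, $(b)$, $(c)$. Combined with the fact, noted before the statement, that these three families are isometries, this proves that $\Gamma$ is exactly the group they generate. Everything after the characterization of $\B\cap\RR$ is bookkeeping with the explicit form \eqref{metricacartesiana} of $g$ and the elementary determinacy of isometries by a $1$-jet.
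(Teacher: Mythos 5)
Your proposal is correct, and its first half is the same as the paper's: you isolate the real diameter by the injectivity-radius dichotomy (this is exactly Lemma \ref{inj} transported along $\B\cap\RR$ by the maps $M_\lambda$, i.e.\ Lemma \ref{natale}), and you normalize with maps of type $(a)$ and $(c)$ precisely as the paper does, so that the normalized isometry $\tilde F$ fixes $\B\cap\RR$ pointwise. Where you genuinely diverge is the endgame. The paper works globally on the totally geodesic hypersurface $\B(\pi/2)$ (Lemma \ref{M}): it shows radii $re^{\frac{\pi}{2}I}$ go to radii $re^{\theta(I)\psi(I)}$, proves $\theta$ is constant by playing the boundedness of $d_g\bigl(\Phi(\gamma_{I_1}(r)),\Phi(\gamma_{I_2}(r))\bigr)$ against the blow-up of the hyperbolic distance as $r\to1$, pins the constant to $\pi/2$ by a tangent-space argument, and identifies the induced sphere map as an isometry of $\SS$ via the asymptotic computation of Lemma \ref{amaranto}; only then does it invoke $1$-jet rigidity at the origin. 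You instead argue purely infinitesimally at $0$: since $g(0)$ is the Euclidean inner product (set $w=0$ in \eqref{metricacartesiana}) and $d\tilde F_0$ fixes the real direction, orthogonality forces $d\tilde F_0=\mathrm{id}_{\RR}\oplus A$ with $A\in O(3)$, and since $T_A(q)=\RRe(q)+A(\IIm(q))$ is a type $(b)$ isometry with the same $1$-jet, rigidity gives $\tilde F=T_A$. This is shorter and entirely bypasses the $\B(\pi/2)$ analysis and Lemma \ref{amaranto}; what the paper's longer route buys is geometric information of independent interest (the behavior of isometries on $\B(\pi/2)$ and the boundary asymptotics of $d_g$, in the spirit of the later estimates of Section \ref{dentro}). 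Two small points you should make explicit to seal the shortcut: first, the identity $T_A(q)=\RRe(q)+A(\IIm(q))$ shows $T_A$ is linear, hence smooth across the real axis, with $d(T_A)_0=\mathrm{id}_{\RR}\oplus A$, and that it is an isometry for \emph{every} $A\in O(3)$ (immediate from \eqref{metricacartesiana}, since $|T_A(w)|=|w|$ and $|1-T_A(w)^2|=|1-w^2|$, and $T_A$ maps the splitting $L_{I_w}+L_{I_w}^\perp$ to $L_{A(I_w)}+L_{A(I_w)}^\perp$ preserving Euclidean lengths); second, the extension of the injectivity-radius statement from $D\bigl(\pi/2,\mathcal{C}(J)\bigr)\setminus\{0\}$, where Lemma \ref{inj} proves it, to all non-real points, which requires conjugating an arbitrary non-real point into $\B(\pi/2)\setminus\{0\}$ by a real $M_\lambda$ — the same transport the paper performs inside the proof of Lemma \ref{natale}.
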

The proof requires a few steps. To begin with, we identify three classes of totally geodesic submanifolds of $\B$, each one related to a class of isometries.

The first family is the one related to isometries of type $(a)$.
\begin{lem}\label{capodanno}
For any $I\in \s$, the two-dimensional submanifold of $\B$
\[ \B_I=\B \cap L_I= \{re^{tI}\in \B \, | \, r\in [0,1), t\in[0, 2\pi ]\}\]
is totally geodesic. In particular, for any $I\in \s$, $\B_I$ is an hyperbolic disc.
\end{lem}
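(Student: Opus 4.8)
The plan is to realize $\B_I$ as the fixed-point set of an isometry of $(\B,g)$ and then invoke the classical fact that the fixed-point set of an isometry is a totally geodesic submanifold. The ``hyperbolic disc'' assertion will then follow simply by reading off the induced metric from \eqref{metricacartesiana}.

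First I would produce the isometry. Let $A:\s\to\s$ be the rotation of the sphere $\s\cong\s^2$ by angle $\pi$ about the axis through $\pm I$; this is an isometry of $\s$, so by item (b) the map $T_A:q=x+yJ\mapsto x+yA(J)$ is an isometry of $(\B,g)$. A point $q=x+yJ$ is fixed by $T_A$ exactly when $yJ=yA(J)$, i.e.\ when $y=0$ or $A(J)=J$. Since the only fixed points of $A$ on $\s$ are $\pm I$, the fixed-point set of $T_A$ is $\{x+yJ\in\B:J=\pm I\}\cup(\rr\cap\B)=L_I\cap\B=\B_I$. Verifying that this set is exactly $\B_I$, and nothing larger, is the one point that needs care; it rests precisely on $A$ having $\pm I$ as its only fixed points on $\s$.

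Next I would run the standard geodesic argument. Since $A(I)=I$ and $A(-I)=-I$, the map $T_A$ restricts to the identity on the whole slice $L_I$, so every tangent vector $v\in T_p\B_I=L_I$ is fixed by the differential $(T_A)_*[p]$. Given $p\in\B_I$ and $v\in T_p\B_I$, let $\gamma$ be the geodesic of $(\B,g)$ with $\gamma(0)=p$ and $\dot\gamma(0)=v$. Because $T_A$ is an isometry, $T_A\circ\gamma$ is again a geodesic; its initial data are $T_A(p)=p$ and $(T_A)_*[p]\,v=v$, so $T_A\circ\gamma=\gamma$ by uniqueness. Hence $\gamma$ lies in $\mathrm{Fix}(T_A)=\B_I$, which is exactly the statement that $\B_I$ is totally geodesic. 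Finally, restricting \eqref{metricacartesiana} to $\B_I$ forces $d_2=0$ and leaves $|d|^2_{g(w)}=\tfrac{1}{(1-|w|^2)^2}|d_1|^2$ (equivalently, setting $dI=0$ in \eqref{metricapolare} gives $\tfrac{dr^2+r^2dt^2}{(1-r^2)^2}$), which is the Poincar\'e metric on the unit disc $\B_I\cong\C$; thus $(\B_I,g|_{\B_I})$ is a hyperbolic disc. The whole argument is essentially bookkeeping: once the fixed-point set is pinned down to be precisely $\B_I$, the total-geodesy conclusion is immediate, so the only real obstacle is confirming $\mathrm{Fix}(T_A)=\B_I$ and that $T_A$ fixes $L_I$ pointwise.
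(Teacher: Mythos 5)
Your proof is correct, but it takes a genuinely different route from the paper's. The paper argues via a length-decreasing projection: in the spherical coordinates of \eqref{metricapolare}, the map $\pi_I$ sending $\alpha(\tau)=r(\tau)\left(\cos t(\tau)+\sin t(\tau)I(\tau)\right)$ to $r(\tau)\left(\cos t(\tau)+\sin t(\tau)I\right)$ kills the $|dI|^2$ term and leaves the slice part untouched, so $\length(\alpha)\ge\length(\pi_I(\alpha))\ge\length(\gamma)$ for the hyperbolic geodesic $\gamma$ with the same endpoints; this is a metric-space argument showing slice geodesics are \emph{globally} length-minimizing, with no appeal to geodesic uniqueness. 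You instead realize $\B_I$ as $\mathrm{Fix}(T_A)$ for the rotation $A$ of $\s$ by $\pi$ about $\pm I$, and your pinning-down of the fixed set is done correctly; note that $T_A$ is just the restriction of a linear map of $\HH$ fixing $\RR$ and acting by $A\in SO(3)$ on $\II$, which makes its smoothness and its isometry property (via \eqref{metricacartesiana}) transparent. Notably, your mechanism is exactly the one the paper itself uses for its other two totally geodesic families --- the reflection $\psi$ of $\s$ in Lemma \ref{C(J)} and the map $R(q)=-\overline q$ in Lemma \ref{M} --- so your argument is consistent with the paper's toolkit, deployed one lemma earlier. The trade-off: your symmetry argument needs uniqueness of geodesics, hence smoothness of $g$ across the real axis where the splitting $d=d_1+d_2$ degenerates (this holds, since $|1-w^2|^2=(1-|w|^2)^2+4|\IIm w|^2$ shows $g$ is smooth in Cartesian coordinates), whereas the paper's projection buys the stronger global minimality of slice geodesics. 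Your reading of the induced Poincar\'e metric from \eqref{metricacartesiana}, giving the hyperbolic-disc conclusion, matches the paper.
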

\begin{proof}
Fix $I\in \s$ and let $g^I_{hyp}$ be the restriction of the metric $g$ to $\B_I$, which is just the classical hyperbolic metric in the unit disc. 
We will show that each geodesic of $(\B_I, g_{hyp}^I)$ is still a geodesic of $(\B,g)$.
Pick two points $w,z$  in $\B_I$ and let $\gamma$ be the (hyperbolic) geodesic in $\B_I$ joining $w$ with $z$, and $\alpha(\tau)=r(\tau)\left(\cos(t(\tau))+\sin(t(\tau))I(\tau) \right)$
be a parametrized curve which joins $w=\alpha(\tau_0)$ with $z=\alpha(\tau_1)$. 
If $\pi_I \left(\alpha\right)$ denotes the piecewise regular curve obtained by projecting $\alpha$ on $\B_I$,
\[\pi_I \left(\alpha\right)(\tau)=r(\tau)\left(\cos(t(\tau))+\sin(t(\tau))I \right),\]
since $|dI|$ is orthogonal to $\B_I$, we conclude   
\[\length(\alpha) \ge \length(\pi_I\left( \alpha\right))\ge \length(\gamma).\]
\end{proof}

The second family of totally geodesic submanifolds is related to isometries of type $(b)$. For any $I\in \s$, we denote by $\mathcal{C}(I)$ the great circle obtained intersecting $\s$ with the plane $L_I^{\perp}$. 
\begin{lem}\label{C(J)}
For any $J\in \s$, the three-dimensional submanifold of $\B$
\[\B(\mathcal C(J))=\{re^{tI}\in \B \, | \, r\in [0,1), t\in[0,\pi ], I\in \mathcal C(J)\}\]
is totally geodesic.
\end{lem}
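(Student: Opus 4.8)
The plan is to exhibit $\B(\mathcal C(J))$ as the fixed-point set of a single isometry of type $(b)$, and then to invoke the classical fact that each connected component of the fixed-point set of an isometry of a Riemannian manifold is a totally geodesic submanifold (see, e.g., Kobayashi--Nomizu).

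First I would select the isometry. Let $P_J\subset\II$ be the two-plane of imaginary quaternions orthogonal to $J$, so that $\mathcal C(J)=\s\cap P_J$, and let $A$ be the reflection of $\II\cong\RR^3$ across $P_J$, which fixes $P_J$ pointwise and sends $J\mapsto-J$. Its restriction to $\s$ is an isometry of $\s$ whose fixed-point set is exactly $\mathcal C(J)$, so by item $(b)$ the map $T_A(re^{tI})=re^{tA(I)}$ is an isometry of $(\B,g)$.

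Next I would identify its fixed points. Writing $q=re^{tI}=r\cos t+r\sin t\,I$, the equation $T_A(q)=q$ becomes $r\sin t\,(I-A(I))=0$, which holds precisely when $r=0$, or $\sin t=0$ (that is, $q\in\RR$), or $A(I)=I$ (that is, $I\in\mathcal C(J)$). Since the real points are recovered by taking $t\in\{0,\pi\}$ together with any $I\in\mathcal C(J)$, these cases combine to give $\mathrm{Fix}(T_A)=\B(\mathcal C(J))$; equivalently, $\B(\mathcal C(J))=\B\cap(\RR\oplus P_J)$ is the slice of $\B$ by a three-dimensional real subspace. This set is connected, because every disc $\B_I$ with $I\in\mathcal C(J)$ meets the real axis and $\mathcal C(J)$ is connected, and it is three-dimensional. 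Applying the fixed-point theorem to $T_A$ then yields that $\B(\mathcal C(J))$ is totally geodesic.

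The delicate step is the fixed-point computation: one must handle the non-uniqueness of the representation $q=re^{tI}$ at $r=0$ and on the real axis in order to be sure that $\mathrm{Fix}(T_A)$ is exactly $\B(\mathcal C(J))$, neither larger nor lower-dimensional. It is worth stressing that, in contrast with Lemma \ref{capodanno}, the naive projection argument is \emph{not} available here: reading off the $\s$-part of the metric from \eqref{metricapolare}, it is a conformal multiple of the round metric on $\s$, and orthogonal projection of $\s$ onto the great circle $\mathcal C(J)$ fails to be length-nonincreasing (a purely tangential motion at nonzero distance from $\mathcal C(J)$ gets lengthened). This is precisely why the fixed-point characterization, rather than a length comparison, is the natural route.
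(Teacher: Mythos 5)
Your proof is correct and is essentially the paper's own argument: the paper likewise takes the reflection $\psi$ of $\s$ across the great circle $\mathcal C$, observes that the corresponding map of type $(b)$ is an isometry of $(\B,g)$, and concludes by uniqueness of geodesics with assigned initial conditions — which is precisely the standard proof of the fixed-point-set theorem you cite, inlined rather than invoked. Your careful identification $\mathrm{Fix}(T_A)=\B\cap(\RR\oplus P_J)=\B(\mathcal C(J))$ (including the degenerate representations at $r=0$ and on the real axis) is sound, and your side remark is accurate: the projection trick of Lemma \ref{capodanno} is unavailable here, since nearest-point projection onto a great circle is not length-nonincreasing for the round metric on $\s$.
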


\begin{proof}
We will prove the statement by showing that the imaginary units identified by all points lying on a same geodesic always belong to the same great circle of $\s$.
More precisely, let $\gamma(\tau) = r(\tau)\left(\cos(t(\tau))+\sin(t(\tau))I(\tau) \right)$ be a parametrized geodesic of $(\B,g)$ such that 
\begin{equation*}
\left\{
\begin{array}{l}
\gamma(\tau_0)=x_0+y_0I_0\\ 
\gamma '(\tau_0)= v_0 + w_0 J_0  
\end{array}
\right.
\end{equation*}
We want to show that, for any $\tau$, the imaginary unit $I(\tau)$ of $\gamma(\tau)$ belongs to the great circle of $\SS$ identified by $I_0$ and $J_0$, namely that, for any $\tau$, $I(\tau)\in  \mathcal{C}:=\mathcal{C} \left(I_0 \times J_0\right). $
Let $\psi:\s \to \s$ be the reflection of $\s$ with respect to $\mathcal C$. 
Then the curve $\tilde \gamma(\tau)= r(\tau)\left(\cos(t(\tau))+\sin(t(\tau))\psi(I(\tau)) \right)$ is a geodesic of $(\B,g)$ such that
\begin{equation*}
\left\{
\begin{array}{l}
\tilde \gamma(\tau_0)
=\gamma(\tau_0) \\
\tilde \gamma '(\tau_0)
= \gamma ' (\tau_0) 
\end{array}
\right.
\end{equation*}
since $\psi$ fixes $I_0$ and $J_0$.
By the uniqueness of geodesics with assigned initial conditions, we get that $\tilde \gamma (\tau)=\gamma(\tau)$ and hence that $\psi$ fixes $I(\tau)$ for any $\tau$. Therefore we conclude that $I(\tau) \in \mathcal C$ for any $\tau \in I$. 
\end{proof}

The third totally geodesic submanifold is the one related to the last class of isometries, type $(c)$. 
\begin{lem}\label{M}
The three-dimensional submanifold of $\B$
\[\B\left(\pi/ 2\right)=\{re^{tI}\in \B \, | \, r\in [0,1), t=\pi/2 , I\in \s \}= \{rI \, | \, r\in [0,1), I\in \s   \}\]
is totally geodesic.
\end{lem}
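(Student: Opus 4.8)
The plan is to recognize $\B(\pi/2)$ as the fixed-point set of the isometry $R$ of type $(c)$, and then to run the same uniqueness-of-geodesics argument already used in Lemma \ref{C(J)}. First I would observe that $re^{(\pi/2)I}=rI$, so that
\[
\B(\pi/2)=\{rI\in\B \, | \, r\in[0,1),\ I\in\s\}=\II\cap\B,
\]
the open ball of purely imaginary quaternions. This set is exactly the fixed locus of the reflection $R(q)=-\overline q$: indeed $R(q)=q$ is equivalent to $q+\overline q=0$, that is to $\RRe(q)=0$.

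Next I would record the action of $R$ on tangent vectors. As a map $\HH\to\HH$, $R$ is $\RR$-linear, negating the real part and fixing the imaginary part; hence its real differential $R_*[p]$ at every point $p$ equals $R$ itself, with $(+1)$-eigenspace $\II$ and $(-1)$-eigenspace $\RR$. Identifying $T_p\B$ with $\HH$, for any $p\in\B(\pi/2)$ the tangent space $T_p\B(\pi/2)$ coincides with $\II$, i.e. with the $(+1)$-eigenspace of $R_*[p]$.

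With these two facts in hand, the conclusion follows as in Lemma \ref{C(J)}. Given a geodesic $\gamma$ of $(\B,g)$ with $\gamma(\tau_0)=p\in\B(\pi/2)$ and $\gamma'(\tau_0)=v\in T_p\B(\pi/2)$, the curve $R\circ\gamma$ is again a geodesic, since $R$ is an isometry of $(\B,g)$. Its initial data are
\[
(R\circ\gamma)(\tau_0)=R(p)=p,\qquad (R\circ\gamma)'(\tau_0)=R_*[p](v)=v,
\]
the last equality holding because $v$ lies in the $(+1)$-eigenspace of $R_*[p]$. By the uniqueness of geodesics with assigned initial conditions, $R\circ\gamma=\gamma$, so $\gamma(\tau)\in\mathrm{Fix}(R)=\B(\pi/2)$ for every $\tau$. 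Hence every geodesic tangent to $\B(\pi/2)$ remains in $\B(\pi/2)$, which is the assertion.

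The one delicate point I would check carefully is the identification $T_p\B(\pi/2)=\ker\!\left(R_*[p]-\mathrm{id}\right)$: the uniqueness argument pins $\gamma$ back onto the fixed set only if the initial velocity tangent to the submanifold is genuinely fixed by $R_*$. Since $\B(\pi/2)=\II\cap\B$ is open in the linear subspace $\II$, which is precisely the $(+1)$-eigenspace of the linear map $R$, this identification is immediate; it is nonetheless the hinge on which the proof turns. Equivalently, one may simply invoke the general principle that each connected component of the fixed-point set of an isometry is a totally geodesic submanifold.
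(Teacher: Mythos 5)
Your proof is correct and follows essentially the same route as the paper, which argues (in sketch form) via exactly the two ingredients you use: that $R(q)=-\overline q$ is an isometry of $(\B,g)$ fixing $\B(\pi/2)$ pointwise, and the uniqueness of geodesics with assigned initial conditions, as in Lemma \ref{C(J)}. Your filling-in of the detail that $T_p\B(\pi/2)=\II$ is the $(+1)$-eigenspace of $R_*[p]=R$ is precisely the step the paper leaves implicit.
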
 
\begin{proof}
The statement can be proven following the line of the proof of Lemma \ref{C(J)}. The ingredients are the fact that the map $R: \B \to \B$, $q\mapsto -\overline{q}$ is an isometry which fixes (punctually) $\B(\pi/2)$, and the uniquness of geodesics with assigned initial conditions.   
\end{proof}

Considering the intersection of totally geodesic submanifolds of type $\B(C(J))$ with $\B(\pi/2)$ allows us to identify another family of totally geodesic submanifolds of $\B$.
\begin{coro}\label{D(C(J))}
Let $\mathcal{C}(J)$ be a great circle in $\s$. Then the two-dimensional submanifold $D\left(\pi/2, \mathcal{C}(J)\right)\subset \B(\pi/2)$, defined as
\[D\left(\pi/2, \mathcal{C}(J)\right)=\{re^{tI}\in \B \, | \, r\in [0,1), t=\pi/2, I \in \mathcal{C}(J) \}=\{rI \in \B \, | \, r\in [0,1), I \in \mathcal{C}(J) \},\] 
is totally geodesic.
\end{coro}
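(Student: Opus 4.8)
The plan is to realize $D(\pi/2,\mathcal{C}(J))$ as the intersection of the two three-dimensional totally geodesic submanifolds already constructed, namely $\B(\mathcal{C}(J))$ from Lemma \ref{C(J)} and $\B(\pi/2)$ from Lemma \ref{M}, and then to invoke the general principle that a (clean) intersection of totally geodesic submanifolds is again totally geodesic.

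First I would check the set-theoretic identity $D(\pi/2,\mathcal{C}(J))=\B(\mathcal{C}(J))\cap\B(\pi/2)$. A point of $\B(\pi/2)$ has $t=\pi/2$, so $e^{tI}=I$ and the point has the form $rI$ with $I\in\s$; imposing membership in $\B(\mathcal{C}(J))$ forces in addition $I\in\mathcal{C}(J)$, while conversely every $rI$ with $I\in\mathcal{C}(J)$ clearly lies in both. It is convenient to record the ``flat'' descriptions $\B(\pi/2)=\B\cap\II$ and $\B(\mathcal{C}(J))=\B\cap(\RR+L_J^{\perp})$, where $L_J^{\perp}\subseteq\II$ is the two-plane of imaginary quaternions orthogonal to $J$. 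From these, $D(\pi/2,\mathcal{C}(J))=\B\cap L_J^{\perp}$ is a two-dimensional disc, hence a genuine embedded submanifold.

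Next I would verify the tangency condition $T_pD=T_p\B(\mathcal{C}(J))\cap T_p\B(\pi/2)$ at each $p\in D$. Since all three sets are intersections of $\B$ with linear subspaces of $\HH$, their tangent spaces are those very subspaces, namely $\II$, $\RR+L_J^{\perp}$ and $L_J^{\perp}$ respectively, and one has $\II\cap(\RR+L_J^{\perp})=L_J^{\perp}$. This is the only point at which the argument could fail, and here it is immediate. With this in hand I would conclude by the geodesic criterion: given $p\in D$ and $v\in T_pD=T_p\B(\mathcal{C}(J))\cap T_p\B(\pi/2)$, the $g$-geodesic $\gamma$ with $\gamma(\tau_0)=p$ and $\gamma'(\tau_0)=v$ remains inside $\B(\mathcal{C}(J))$ because that submanifold is totally geodesic and $v$ is tangent to it, and for the same reason it remains inside $\B(\pi/2)$; therefore $\gamma$ stays in $\B(\mathcal{C}(J))\cap\B(\pi/2)=D$, which is exactly total geodesy of $D$.

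The main (and essentially only) obstacle is guaranteeing that the intersection is a submanifold whose tangent space is the intersection of the two tangent spaces; the explicit description $D=\B\cap L_J^{\perp}$ settles this at once, so no genuine difficulty remains. As an alternative one could observe that $D$ is precisely the common fixed-point set of the isometry $R\colon q\mapsto-\overline{q}$ of type $(c)$ and the sphere-reflection $T_A$ of type $(b)$ fixing the great circle $\mathcal{C}(J)$, and appeal to the fact that the fixed-point set of a family of isometries is totally geodesic, reproducing the reflection argument used in Lemmas \ref{C(J)} and \ref{M}.
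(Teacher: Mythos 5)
Your proof is correct and follows exactly the paper's route: the paper justifies the corollary precisely by observing that $D\left(\pi/2,\mathcal{C}(J)\right)$ is the intersection of the totally geodesic submanifolds $\B(\mathcal{C}(J))$ and $\B(\pi/2)$ from Lemmas \ref{C(J)} and \ref{M}. You merely make explicit the clean-intersection/tangency check ($\II\cap(\RR+L_J^{\perp})=L_J^{\perp}$) that the paper leaves implicit, which is a welcome addition rather than a deviation.
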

\begin{oss}\label{foliation}
Notice that for the two-dimensional submanifold $D\left({\pi}/2, \mathcal{C}(J)\right)$ the following orthogonality relation holds:
\[D\left({\pi}/2, \mathcal{C}(J)\right)\cap \B_J=\{0\} \text{ and } T_0 D\left({\pi}/2, \mathcal{C}(J)\right)=T_0 \B_J^{\perp}.\]
Moreover, applying M\"obius maps of the form $M_{\lambda}$ to $D\left({\pi}/2, \mathcal{C}(J)\right)$, we can extend the orthogonality relation from the origin to all points in $\BB\cap\RR$. In this way we obtain a family of totally geodesic submanifolds  
\[D\left(t, \mathcal{C}(J)\right)=M_{\lambda(t)}\left(D\left({\pi}/2, \mathcal{C}(J)\right)\right)\]
that, for $t\in [0,\pi]$ and $J\in\s/\{\pm 1\}$, defines a foliation of the manifold $\B$.
\end{oss}

\noindent In order to have some understanding of the (global) behavior of the metric $g$, let
us investigate some metric properties of the discs of the type $D\left(\frac{\pi}{2}, \mathcal{C}(J)\right)$. 
Since the imaginary units taken into account belong to $\mathcal C (J)\cong \s^1$, we can change coordinates, 
setting $I=e^{i\theta}$ and $|dI|=d\theta$, so that the metric $g$, on $D\left(\frac{\pi}{2}, \mathcal{C}(J)\right)$, reduces to
\[ds_{D}^2=\frac{dr^2}{(1-r^2)^2}+\frac{r^2d\theta^2}{(1+r^2)^2}.\] 
%
It is actually convenient to parametrize $D\left(\frac{\pi}{2}, \mathcal{C}(J)\right)\subset \II \cong \rr^3$ as a surface of revolution of the form $(\Phi(\rho), \Psi(\rho) \cos \theta, \Psi(\rho)\sin \theta)$, where $\rho$ is the arc length of the generating curve. 
Setting 
\[\rho=\rho(r)=\frac{1}{2}\log \frac{1+r}{1-r},\]
we get
\[\frac{dr^2}{(1-r^2)^2}=d\rho^2 \quad \text{and} \quad \frac{r^2}{(1+r^2)^2}=\frac{1}{4} \tanh^2 (2 \rho)\]
and hence, in coordinates $(\rho, \theta)$, we get that the metric is expressed as
\begin{equation}\label{malanotteno}
ds_{D}^2=d\rho^2+\frac{1}{4} \tanh^2 (2 \rho)d\theta^2=d\rho^2+\Psi^2(\rho) d\theta^2.
\end{equation}
\begin{oss}
The Gaussian curvature $K$ of the two-dimensional submanifold $D\left(\frac{\pi}{2}, \mathcal{C}(J)\right)$ is positive. 
In fact, 
see e.g.  \cite{DoCarmo}, with respect to coordinates $(\rho, \theta)$ it can be computed as
\[K=\frac{-\Psi''(\rho)}{\Psi(\rho)}\]
which is a non-negative quantity since $\Psi(\rho)=\frac{1}{2} \tanh (2 \rho)\ge 0$ and $\Psi''(\rho)\le 0$. 
This in particular implies that the sectional curvature of $(\B,g)$ is positive on all sections $D\left(\frac{\pi}{2}, \mathcal{C}(J)\right)$, while it is negative on all slices $\B_I$.
\end{oss}

It is possible to study geodesics of  $D\left(\frac{\pi}{2}, \mathcal{C}(J)\right)$ by means of the Euler-Lagrange equations 
\begin{equation*}
\left\{\begin{array}{l}
\frac{\partial}{\partial \theta}L=\frac{d}{dt}\frac{\partial}{\partial \dot \theta}L\\ 
\frac{\partial}{\partial \rho}L=\frac{d}{dt}\frac{\partial}{\partial \dot \rho}L
\end{array}
\right.
\end{equation*}
associated with the Lagrangian
\[L(\rho, \theta,\dot \rho, \dot \theta, \tau )=\frac{1}{2}\left(\dot \rho^2+\frac{\tanh^2(2\rho)}{4}\dot \theta^2\right),\]
namely
\begin{equation} \label{geo1}
\left\{\begin{array}{l}
 0=\frac{d}{dt}\left(\frac{\tanh^2(2\rho)}{4} \dot \theta\right)  \\
\tanh (2\rho)\frac{1-\tanh^2(2\rho)}{4}\dot \theta^2=\ddot {\rho}. 
\end{array}
\right.
\end{equation}
The first equation in \eqref{geo1} yields
\[\frac{\tanh^2(2\rho)}{4} \dot \theta= A,\]
for some constant $A$. If $A=0$, we get $\dot \theta=0$ and hence the second equation in \eqref{geo1} implies that $\ddot{\rho}=0$.
If otherwise $A \neq 0$ we get $\dot \theta=\frac{4A}{\tanh^2(2\rho)}$ which implies $|\dot \theta|>4|A|$. 
Therefore all generating curves, with $\dot \theta=0$, are geodesics of $D$. Which is not surprising since they correspond to radii $\gamma(r)=re^{\frac{\pi}{2}I}$ for $I\in \mathcal C(J)$. 
The other important fact that arises is that for any point $q\in D\left(\frac{\pi}{2}, \mathcal{C}(J)\right)\setminus \{0\}$ any geodesic corresponding to $A\neq 0$ intersects in finite time the ``radial'' geodesic through $q$. This leads to the following result.
\begin{lem}\label{inj}
Let $J\in \s$. For any $q\in D\left(\frac{\pi}{2}, \mathcal{C}(J)\right)$ such that $q\neq 0$, the injectivity radius at $q$ is finite. On the other hand, the injectivity radius at $q=0$ is infinite. 
\end{lem}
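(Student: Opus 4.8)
The plan is to treat the two cases separately: the infinite case at the apex by a direct Jacobi-field computation, and the finite case at $q\neq0$ by combining the return property established just above with the reflection symmetry of the surface of revolution $D:=D(\pi/2,\mathcal C(J))$. Throughout I work in the coordinates $(\rho,\theta)$ of \eqref{malanotteno}, in which $ds_D^2=d\rho^2+\Psi(\rho)^2\,d\theta^2$ with $\Psi(\rho)=\tfrac12\tanh(2\rho)$, and I recall that by Corollary \ref{D(C(J))} the geodesics of $D$ are geodesics of $(\B,g)$.

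For $q=0$ I would show $\inj(0)=\infty$. Since $\Psi(0)=0$, every geodesic issuing from the origin has vanishing Clairaut constant $A=\Psi^2\dot\theta=0$, hence is one of the radial meridians $\tau\mapsto(\tau,\theta_0)$; two distinct meridians meet only at the origin, and each is length-minimizing because $ds_D\ge d\rho$. Along a meridian the normal Jacobi field vanishing at the apex has magnitude $u(\rho)$ solving $u''+Ku=0$ with $u(0)=0$ and $u'(0)=1$; since $K=-\Psi''/\Psi$ one checks $u=\Psi$, which is strictly positive for $\rho>0$. Thus there are no conjugate points, so $d\exp_0$ is everywhere nonsingular, while the foliation by meridians shows $\exp_0$ is a bijection of $T_0D\setminus\{0\}$ onto $D\setminus\{0\}$. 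A bijective local diffeomorphism is global, so the cut locus of $0$ is empty and $\inj(0)=\infty$.

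For $q\neq0$, write $q=(\rho_q,\theta_q)$ with $\rho_q>0$, and let $\sigma\colon(\rho,\theta)\mapsto(\rho,2\theta_q-\theta)$ be the reflection across the radial geodesic $m$ through $q$. This $\sigma$ is an isometry of $D$ (the restriction of a type-(b) isometry of $\mathcal C(J)\cong\SS^1$), it fixes $q$ and $m$ pointwise, and it sends a geodesic with Clairaut constant $A$ to one with $-A$. Choose any unit-speed geodesic $\gamma$ from $q$ with $A\neq0$; by the discussion preceding the statement, $\gamma$ returns to $m$ at a first point $p=\gamma(s)$ with $0<s<\infty$. Then $\bar\gamma:=\sigma\circ\gamma$ is a geodesic from $\sigma(q)=q$ to $\sigma(p)=p$ of the same length $s$, and it is distinct from $\gamma$: its initial velocity is the $\sigma$-image of $\gamma'(0)$, whose $\partial_\theta$-component $\dot\theta(0)=A/\Psi(\rho_q)^2$ is nonzero (as $A\neq0$ and $\Psi(\rho_q)>0$) and is reversed by $\sigma$.

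Finally I would conclude as follows. We have $\exp_q(s\,\gamma'(0))=p=\exp_q(s\,\bar\gamma'(0))$ with $s\,\gamma'(0)\neq s\,\bar\gamma'(0)$ in $T_qD$, so $\exp_q$ is not injective on the closed ball of radius $s$; were $\inj(q)>s$, the map $\exp_q$ would be a diffeomorphism on an open ball of radius exceeding $s$, a contradiction. Hence $\inj(q)\le s<\infty$, and since $\gamma,\bar\gamma$ are also geodesics of $(\B,g)$ the same bound holds there. I expect the only delicate point to be the infinite case: one must simultaneously exclude conjugate points along the meridians, handled by the sign of the Jacobi field $\Psi$, and any reconvergence of distinct geodesics from the apex, so as to upgrade the local diffeomorphism $\exp_0$ to a global one. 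The finite case is then immediate from the return property and the reflection $\sigma$.
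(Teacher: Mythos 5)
Your proof is correct and follows essentially the route the paper intends: the paper gives no formal proof of Lemma \ref{inj}, only the preceding Euler--Lagrange discussion (radial curves are geodesics; every geodesic with $A\neq0$ through $q\neq0$ returns in finite time to the radial geodesic through $q$), and your argument is a rigorous implementation of that sketch. Two points of comparison. For $q\neq0$, the paper implicitly pairs the returning geodesic with the radial geodesic through $q$, i.e.\ two distinct geodesics from $q$ to $p$ of possibly different lengths, which already bounds $\inj(q)$ by the larger of the two lengths; your reflection $\sigma$ is a clean refinement that produces two geodesics of \emph{equal} length, and your Jacobi-field computation $u=\Psi$ (with $u(0)=0$, $u'(0)=\Psi'(0)=1$, $\Psi>0$ for $\rho>0$) makes precise the absence of conjugate points that the paper leaves implicit at the apex.

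One caveat is worth recording. As the lemma is used in Lemma \ref{natale}, the injectivity radius in question is that of the ambient manifold $(\B,g)$ (it must be preserved by isometries of $(\B,g)$), not the intrinsic one of the surface $D\left(\pi/2,\mathcal{C}(J)\right)$. Your finite case transfers correctly --- you note that $\gamma$ and $\bar\gamma$ are geodesics of $(\B,g)$ because $D$ is totally geodesic --- but your apex argument is purely intrinsic to $D$, and an infinite intrinsic injectivity radius of a totally geodesic surface does not by itself yield an infinite ambient one, since geodesics of $(\B,g)$ issuing from $0$ need not stay in $D$. The gap is closed in one line: by \eqref{metricapolare} one has $ds_g\ge |dr|/(1-r^2)$, with equality exactly along Euclidean radii, and every tangent vector at $0$ is tangent to some slice $L_I$; hence the geodesics of $(\B,g)$ from $0$ are precisely the radii, each uniquely minimizing for all time, so the ambient cut locus of $0$ is empty and $\inj(0)=\infty$ in $(\B,g)$. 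The paper is equally silent on this intrinsic-versus-ambient point, so with this one supplementary remark your write-up is, if anything, more complete than the original.
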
  

\noindent This important metric property of the point $q=0$ is useful to classify the isometries of $(\B,g)$. 
First of all it tells us that isometries map the real diameter of $\BB$ to itself.
\begin{lem}\label{natale}
Let $\Gamma$ be the group of isometries of $(\B, g)$. Then, for any $\phi \in \Gamma$, $\phi(\B\cap \RR)= \B\cap \RR$.
\end{lem}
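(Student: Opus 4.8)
The plan is to produce an isometry invariant that separates the real diameter from the rest of $\B$, and the natural candidate is the injectivity radius. Since the coefficients of $g$ in \eqref{metricacartesiana} blow up as $|w|\to 1$, the manifold $(\B,g)$ is geodesically complete, so the injectivity radius $\inj(p)\in(0,\infty]$ is defined, is continuous in $p$, and satisfies $\inj(\phi(p))=\inj(p)$ for every $\phi\in\Gamma$. Consequently it suffices to prove the characterization
\[
\{p\in\B:\ \inj(p)=\infty\}=\B\cap\RR,
\]
for then any $\phi\in\Gamma$ must carry this set onto itself, which is the assertion $\phi(\B\cap\RR)=\B\cap\RR$.

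First I would show that every non-real point has finite injectivity radius. Given $w\in\B\setminus\RR$, by Remark \ref{foliation} it lies on a leaf $D(t,\mathcal{C}(J))=M_{\lambda(t)}\left(D\left(\frac{\pi}{2},\mathcal{C}(J)\right)\right)$; since the only real point of such a leaf is its pole $M_{\lambda(t)}(0)$, the non-real $w$ is not the pole, and applying the type-$(a)$ isometry $M_{\lambda(t)}^{-1}$ we may assume $w\in D\left(\frac{\pi}{2},\mathcal{C}(J)\right)\setminus\{0\}$. By the Euler--Lagrange analysis preceding Lemma \ref{inj}, from $w$ there emanate two distinct geodesics of the leaf (the meridian through $w$, corresponding to $A=0$, and a geodesic with $A\neq0$) which meet again at a common point $p$. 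Because the leaf is totally geodesic, these are two \emph{distinct} geodesics of $(\B,g)$ joining $w$ to $p$; if $\inj(w)$ were infinite then $\exp_w$ would be a diffeomorphism of $T_w\B$ onto the complete, simply connected $\B$, forcing a unique geodesic between any two points, a contradiction. Hence $\inj(w)<\infty$.

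Next I would treat the real points, where I reduce to the origin: the maps $M_\lambda$ with $\lambda\in(-1,1)$ are isometries and act transitively on $\B\cap\RR$ (indeed $M_\lambda(0)=-\lambda$), so by invariance it is enough to show $\inj(0)=\infty$. The key observation is that any tangent vector $v\in T_0\B=\HH$ can be written $v=\RRe(v)+|\IIm(v)|\,I$ with $I=\IIm(v)/|\IIm(v)|\in\SS$, so $v\in L_I$; since the slice $\B_I$ is totally geodesic (Lemma \ref{capodanno}), the geodesic $\tau\mapsto\exp_0(\tau v)$ stays inside the hyperbolic disc $\B_I$ and therefore never refocuses within the slice. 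What remains is to exclude cut points produced by Jacobi fields transverse to the slice, and here the warped-product form \eqref{metricapolare} is decisive: along such a radial geodesic the normal Jacobi field is controlled by the warping factor $\Psi(\rho)=\frac12\tanh(2\rho)$ of \eqref{malanotteno}, which vanishes only at $\rho=0$. Thus no conjugate point arises, $\exp_0$ is a global diffeomorphism, and $\inj(0)=\infty$; this is precisely the pole statement of Lemma \ref{inj} read in the ambient manifold.

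Combining the two directions gives $\{\inj=\infty\}=\B\cap\RR$, and the $\Gamma$-invariance of the injectivity radius then yields $\phi(\B\cap\RR)=\B\cap\RR$ for all $\phi\in\Gamma$. The step I expect to be the main obstacle is exactly the passage from the two-dimensional leaves, on which Lemma \ref{inj} is phrased, to the four-dimensional $(\B,g)$. For the finite direction this transfer is harmless, since any two geodesics of a totally geodesic leaf are already geodesics of $\B$; but for the infinite direction at real points one cannot argue purely inside the slices, because a conjugate point could in principle be created by a Jacobi field normal to the slice. Ruling this out requires the explicit warped-product structure of $g$ and the non-vanishing of $\Psi$, which is the technical heart of the argument.
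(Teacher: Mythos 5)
Your proposal is correct and takes essentially the same route as the paper: the injectivity radius is the isometry invariant, Lemma \ref{inj} (finite at nonzero points of $D\left(\pi/2,\mathcal{C}(J)\right)$, infinite at $0$) separates $\B\cap\RR$ from the rest after reduction via the foliation of Remark \ref{foliation}, and transitivity of the type-$(a)$ maps $M_\lambda$ on the real diameter concludes. The only difference is one of detail, not of strategy: you make explicit the transfer of Lemma \ref{inj} from the two-dimensional leaves to the ambient $(\B,g)$ (totally geodesic leaves for the finite direction, the warped-product Jacobi-field control for $\inj(0)=\infty$), which the paper leaves implicit.
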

\begin{proof}
Consider first $q=0$. Since the injectivity radius at $q=0$ is infinite, then, for any $\phi\in \Gamma$, the injectivity radius at $\phi(0)$ is infinite as well. By post-composing $\phi$ with a regular M\"obius transformation of type $(a)$ $M_{\lambda}$ we can map $0$ to $D\left(\frac{\pi}{2}, \mathcal{C}(J)\right)$ (for some $J\in \s$) and hence Lemma \ref{inj} yields that $M_{\lambda}( \phi(0))=0$. 
Since $M_{\lambda}$ preserves the real diameter of $\BB$, we get that $\phi(0)\in \rr$. To conclude, notice that we can map each point of $\B\cap \rr$ to $0$ by means of a regular M\"obius map of type $(a)$.     
\end{proof}

We can finally prove the Classification Theorem for isometries of $(\B, g)$.
\begin{proof}[Proof of Theorem \ref{teoclassificazione}]
Let $\Phi \in \Gamma$ be an isometry of $(\B, g)$. Up to composition with a regular M\"obius transformation of type $(a)$ and with the map $R:q\mapsto -\overline q$, 
we can suppose that $\Phi(0)=0$ and that, by Lemma \ref{natale}, $\Phi(\B\cap \rr^+)=\B\cap \rr^+$. 

\it We now show that $\Phi$ fixes $\B(\pi/2)$. \rm Set $\tilde{B}(\pi/2)=\Phi(\B(\pi/2))$. Since $\Phi$ is an isometry, Lemma \ref{M} implies that $\tilde{B}(\pi/2)$ is a 
totally geodesic submanifold of $\B$. Moreover, since $\Phi(0)=0$, since the geodesics starting at $0$ lie on slices, and since, by Lemma \ref{capodanno}, 
the slices carry the usual hyperbolic-Poincar\'e metric: 
we have that $\Phi$ maps radii $\gamma_I(r)=re^{\frac{\pi}{2}I}$ to radii of the 
form $\Phi(\gamma_I(r))=re^{\theta(I)\psi(I)}$ with $\theta(I)\in [0,\pi]$, and $\psi(I)\in\s$. 
Let us show that $\theta$ is actually constant on $\s$. \\
If $d_g$ denotes the distance function on $\B$ associated with $g$, recalling equation \eqref{metricapolare}, on the one hand we have 
\begin{eqnarray}\label{iso1}
d_{g}\left( \Phi(\gamma_{I_1}(r)),\Phi(\gamma_{I_2}(r))\right)&=&d_{g}\left(\gamma_{I_1}(r),\gamma_{I_2}(r)\right)=
d_{g}\left( re^{\frac{\pi}{2}I_1},re^{\frac{\pi}{2}I_2}\right)\crcr
&\le&\frac{r}{1+r^2}d_{\s}(I_1,I_2)\stackrel{r\to1}{\longrightarrow}\frac{1}{2}d_{\s}(I_1,I_2),
\end{eqnarray}
where $d_{\s}$ denotes the usual spherical distance on the unit sphere $\s$.
In particular we deduce that 
$$
d_{g}\left( \Phi(\gamma_{I_1}(r)),\Phi(\gamma_{I_2}(r))\right)
$$ 
is bounded as a function of $r$.
On the other hand, if $\alpha(\tau)=r(\tau)e^{t(\tau)I(\tau)}$ is a parametrized geodesic joining $\alpha(\tau_1)=\Phi(\gamma_{I_1}(r))$ and 
$\alpha(\tau_2)=\Phi(\gamma_{I_2}(r))$, we have
\begin{equation*} 
\begin{aligned}
&d_{g}\left( \Phi(\gamma_{I_1}(r)),\Phi(\gamma_{I_2}(r))\right)=d_{g}\left( re^{\theta({I_1})\psi({I_1})},re^{\theta({I_2})\psi({I_2})}\right)= 
\length \left(\alpha ([\tau_1,\tau_2])\right)\\
&= \int_{\tau_1}^{\tau_2}\sqrt{\frac{r^\prime(\tau)^2+r^2t^\prime(\tau)^2}{1-r(\tau)^2}+\frac{r(\tau)^2\sin^2(t(\tau))I^\prime(\tau)^2}{(1-r(\tau)^2)^2+4r(\tau)^2\sin^2(t(\tau))}}
d\tau\crcr
&\ge \int_{\tau_1}^{\tau_2}\sqrt{\frac{r^\prime(\tau)^2+r(\tau)^2t^\prime(\tau)^2}{1-r(\tau)^2}}d\tau\ge d_{hyp}(re^{\theta({I_1})I},re^{\theta({I_2})I})
\end{aligned}
\end{equation*}
where $I$ is any fixed imaginary unit and $d_{hyp}$ the hyperbolic distance associated to the restriction $g^I_{hyp}$ of the metric $g$ to $\BB\cap L_I$.
If, by contradiction, $\theta(I_1)\neq \theta(I_2)$, then the distance 
$d_{hyp}(re^{\theta({I_1})I},re^{\theta({I_2})I})$ tends to infinity as $r$ goes to $1$, contradicting (\ref{iso1}).
Then, $\theta(I_1)=t(\tau_1)=t(\tau_2)=\theta(I_2)$: $\theta$ is constant on $\s$.

Therefore we have that $\tilde{B}(\pi/2)$ is ruled by radii of the form $\tilde {\gamma}(r)=re^{t_0 I}$ for some constant $t_0$. 
If $t_0=\pi/2$, then we are done. Suppose then $t_0\ne\pi/2$.
Since $\tilde{B}(\pi/2)$ and $\B(\pi/2)$ intersect at $0$ and they are three-dimensional submanifolds 
in $\HH$, the intersection $V$ of their respective tangent spaces at $0$ must have dimension $2$ or $3$. Let $v$ be a vector in $V$ and let $r\mapsto re^{Jt}$
be  the reparametrized geodesic with initial velocity $v$. The geodesic lies on both $\tilde{B}(\pi/2)$ and $\B(\pi/2)$, hence $t_0=t=\pi/2$. 
(A different proof consists in showing that, if $t_0\ne\pi/2$, then $\tilde{B}(\pi/2)$ is not smooth at the origin).


\it The next step is to show that the restriction of $\Phi$ to $\B(\pi/2)$ is an isometry $T_A$ of type $(b)$ for some isometry $A$ of the sphere $\s$. \rm 
We have that
\begin{equation} \label{iso2}
\begin{aligned}
&d_{g}\left( \Phi(\gamma_{I_1}(r)),\Phi(\gamma_{I_2}(r))\right)= 
d_{g}\left( re^{\frac{\pi}{2}\psi({I_1})},re^{\frac{\pi}{2}\psi({I_2})}\right).
\end{aligned}
\end{equation}
We now prove an improvement of (\ref{iso1}). 
\begin{lem}\label{amaranto}
 $$
 \lim_{r\to1}d_{g}\left( re^{\frac{\pi}{2}I_1},re^{\frac{\pi}{2}I_2}\right)=\frac{1}{2}d_{\s}(I_1,I_2).
 $$
\end{lem}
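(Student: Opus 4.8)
The plan is to sandwich the quantity between two bounds whose common limit is $\frac12 d_\s(I_1,I_2)$. The upper bound is essentially free: the horizontal arc $\tau\mapsto re^{\frac{\pi}{2}I(\tau)}$, with $I$ running along the shorter geodesic of $\s$ from $I_1$ to $I_2$, has $g$-length $\frac{r}{1+r^2}d_\s(I_1,I_2)$ by \eqref{metricapolare}, which is exactly the estimate already recorded in \eqref{iso1}; hence $\limsup_{r\to1}d_g(re^{\frac{\pi}{2}I_1},re^{\frac{\pi}{2}I_2})\le\frac12 d_\s(I_1,I_2)$. All the work is in the matching lower bound for the $\liminf$, and I would obtain it in two steps: reduce to the submanifold $\B(\pi/2)$, then run an elementary estimate on the resulting warped product.

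For the reduction I would use the map $P:\B\to\B(\pi/2)=\II\cap\B$, $P(q)=\IIm(q)$, which fixes $\B(\pi/2)$ (the fixed-point set of the isometry $R(q)=-\overline q$ of Lemma \ref{M}) pointwise, and show it is $1$-Lipschitz for $g$. Writing $q=re^{tI}=r\cos t+r\sin t\,I$, one has $P(q)=sI$ with $s=r\sin t$, and a tangent vector with coordinates $(dr,dt,dI)$, $dI\perp I$, is sent to the vector of $\B(\pi/2)$ with radial part $ds=\sin t\,dr+r\cos t\,dt$ and spherical part $s\,dI$. Comparing \eqref{metricapolare} with the induced metric on $\B(\pi/2)$, the inequality $|P_*v|_g\le|v|_g$ decouples into a spherical estimate $\frac{s^2}{(1+s^2)^2}\le\frac{r^2\sin^2 t}{(1-r^2)^2+4r^2\sin^2 t}$, which with $s^2=r^2\sin^2 t$ is exactly
\[
(1+r^2\sin^2 t)^2-\big[(1-r^2)^2+4r^2\sin^2 t\big]=r^2\cos^2 t\,\big(2-r^2(1+\sin^2 t)\big)\ge 0,
\]
and a radial estimate
\[
\frac{(\sin t\,dr+r\cos t\,dt)^2}{(1-s^2)^2}\le\frac{dr^2+r^2\,dt^2}{(1-s^2)^2}\le\frac{dr^2+r^2\,dt^2}{(1-r^2)^2},
\]
the first inequality by Cauchy--Schwarz and the second since $s=r\sin t\le r$. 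A $1$-Lipschitz retraction forces $d_g=d_{\B(\pi/2)}$ on $\B(\pi/2)$, and the induced metric there is the warped product $d\rho^2+\frac14\tanh^2(2\rho)\,|dI|^2$ via the substitution of \eqref{malanotteno} (now with $|dI|$ the round metric of $\s$); both endpoints sit at level $\rho(r)=\frac12\log\frac{1+r}{1-r}$.

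For the lower bound on this warped product I would set $\Delta=d_\s(I_1,I_2)$, $\Psi(\rho)=\frac12\tanh(2\rho)$ (increasing, with $\Psi\uparrow\frac12$), and $\rho_*=\rho(r)-\frac14\Delta$, then split any joining curve $\gamma(\tau)=(\rho(\tau),I(\tau))$ into two regimes. If $\gamma$ stays in $\{\rho\ge\rho_*\}$, then
\[
\length(\gamma)\ge\int\Psi(\rho)\,|\dot I|\,d\tau\ge\Psi(\rho_*)\int|\dot I|\,d\tau\ge\Psi(\rho_*)\,\Delta,
\]
using monotonicity of $\Psi$ and $\int|\dot I|\,d\tau\ge d_\s(I_1,I_2)$. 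If instead $\gamma$ enters $\{\rho<\rho_*\}$, its $\rho$-coordinate drops from $\rho(r)$ below $\rho_*$ and returns, so $\length(\gamma)\ge\int|\dot\rho|\,d\tau\ge2(\rho(r)-\rho_*)=\frac12\Delta$. Since $\Psi(\rho_*)<\frac12$, taking the infimum gives
\[
d_{\B(\pi/2)}\!\left(re^{\frac{\pi}{2}I_1},re^{\frac{\pi}{2}I_2}\right)\ge\Psi(\rho_*)\,\Delta=\tfrac12\tanh\!\big(2\rho(r)-\tfrac12\Delta\big)\,\Delta\xrightarrow{\,r\to1\,}\tfrac12\Delta,
\]
because $\rho(r)\to\infty$; this yields $\liminf_{r\to1}d_g\ge\frac12 d_\s(I_1,I_2)$, and together with the upper bound the limit exists and equals $\frac12 d_\s(I_1,I_2)$.

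I expect the reduction step to be the main obstacle: totally geodesic submanifolds such as $\B(\pi/2)$ need not be distance-minimizing in the ambient space, so one cannot naively restrict a minimizing ambient geodesic to $\B(\pi/2)$ and the surface-of-revolution picture alone is not enough. The explicit $1$-Lipschitz retraction $\IIm$ is what makes $d_g=d_{\B(\pi/2)}$ rigorous and is the only genuinely quaternionic computation in the argument; once on the warped product, the only delicate point is balancing the two regimes through the choice $\rho_*=\rho(r)-\frac14\Delta$, so that the ``stay high'' bound $\Psi(\rho_*)\Delta$ and the ``dip low'' bound $\frac12\Delta$ both converge to $\frac12\Delta$.
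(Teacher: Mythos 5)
Your proof is correct, but it follows a genuinely different route from the paper's. The paper reduces to the \emph{two}-dimensional totally geodesic surface $D\left(\pi/2,\mathcal{C}(J)\right)$ of Corollary \ref{D(C(J))} containing both radii, isometrically embeds it (using $\Psi'\le 1$ in \eqref{malanotteno}) as a surface of revolution in $\RR^3$ asymptotic to a cylinder of radius $1/2$, and then simply asserts as ``elementary'' that distances between points $\chi(s,\theta_1)$, $\chi(s,\theta_2)$ on that surface tend to $\frac12 d_{\mathbb{S}^1}(\theta_1,\theta_2)$; in particular the passage from the ambient distance $d_g$ to the intrinsic distance of the surface rests implicitly on total geodesicity. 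You instead retract all of $\B$ onto the three-dimensional $\B(\pi/2)$ via $q\mapsto\IIm(q)$ and verify the $1$-Lipschitz property by explicit componentwise inequalities — I checked them: indeed $(1+r^2\sin^2 t)^2-(1-r^2)^2-4r^2\sin^2 t=(1-r^2\sin^2t)^2-(1-r^2)^2=r^2\cos^2 t\,\bigl(2-r^2(1+\sin^2 t)\bigr)\ge 2r^2\cos^2 t\,(1-r^2)\ge0$, and the radial bound follows from Cauchy--Schwarz together with $s=r\sin t\le r$ — after which your two-regime argument on the warped product $d\rho^2+\frac14\tanh^2(2\rho)\,|dI|^2$, with threshold $\rho_*=\rho(r)-\frac14\Delta$ where $\Delta=d_\s(I_1,I_2)$, gives the matching quantitative lower bound $\Psi(\rho_*)\,\Delta\to\frac12\Delta$. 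What your approach buys is exactly the point you flag: a totally geodesic submanifold need not realize ambient distances, so the paper's reduction step is strictly speaking glossed over, whereas your explicit Lipschitz retraction makes it airtight and simultaneously replaces the unproved ``elementary'' limit on the embedded surface by a self-contained estimate; what the paper's approach buys is brevity and a geometrically vivid picture (the asymptotically cylindrical surface of revolution) at the cost of rigor in two spots. Two cosmetic points for a final write-up: the coordinates $(r,t,I)$ degenerate on $\B\cap\RR$, but $P=\IIm$ is linear and the Lipschitz bound extends there by continuity (at real $w$ the metric \eqref{metricacartesianaone} dominates the Euclidean one); and in your first regime one should note $\rho_*>0$ for $r$ near $1$, so curves staying above $\rho_*$ avoid the origin and $I(\tau)$ is well defined. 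Neither affects the argument.
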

\begin{proof}[Proof of the lemma.] Only the case $I_1\ne I_2$ is interesting. Let $D\left(\pi/2, \mathcal{C}(J)\right)$ be the two-dimensional
manifold introduced in 
Lemma \ref{D(C(J))} which contains the reparametrized geodesics $r\mapsto re^{\frac{\pi}{2}I_j}$, $j=1,2$. The metric $g$ restricted to the totally geodesic surface 
$D\left(\pi/2, \mathcal{C}(J)\right)$ was discussed earlier in this subsection, where we gave it the expression (\ref{malanotteno}). 
Since $\Psi^\prime(\rho)=1/\cosh(2\rho)\le1$, the surface can be isometrically imbedded as a surface $S$ in $\RR^3$, with parametric equations $(u(s,\theta),v(s,\theta),z(s,\theta))
=\chi(s,\theta)$,
where:
$$
\begin{cases}
 u=p(s)\cos(\theta);\crcr
 v=p(s)\sin(\theta);\crcr
 z=s.
\end{cases}
$$
Here $s\ge0$, $\theta\in[-\pi,\pi]$ and $p:[0,+\infty)\to[0,1/2)$ is a smooth, increasing function such that $p(0)=0$ and $\lim_{s\to\infty}p(s)=1/2$. The relationship between 
$p$ and $\Psi$ is the following: if $\int_0^s\sqrt{p^\prime(\sigma)^2+1}d\sigma=\rho,$ then $p(s)=\psi(\rho)$. Now, $r=constant\to1$ corresponds to $s=constant\to\infty$,
and the choice of $I_1$ and $I_2$ corresponds to a choice of $\theta_1$ and $\theta_2$. Let $k$ be the metric on the surface. It is elementary that 
$$
\lim_{s\to\infty}d_k(\chi(s,\theta_1),\chi(s,\theta_2))=\frac{1}{2}d_{\mathbb{S}^1}(\theta_1,\theta_2)
$$
is one-half the distance between $\theta_1$ and $\theta_2$ on the unit circle, which is the same as one-half the distance between $I_1$ and $I_2$ in $\SS$.
\end{proof}
Equations \eqref{iso1} and \eqref{iso2} together with Lemma \ref{amaranto} imply that that $\psi:\s\to\s$ 
is an isometry of the sphere $\s$, i.e. $\Phi|_{\B(\pi/2)}=T_{\psi}|_{\B(\pi/2)}$.
In conclusion, $T_{\psi}^{-1}\circ \Phi$ is an isometry that fixes $\rr\cap \B$ and $\B(\pi/2)$ and hence its (real) differential at the origin $(T_{\psi}^{-1}\circ \Phi)_*[0]:T_0\B\to T_0\B$ is the identity map. Therefore $T_{\psi}^{-1}\circ \Phi$ is the identity map as well and the theorem is proved. 
\end{proof}
\subsection{Relation with the space $H^2(\B)$}
If we restrict the metric $g$ to a three-dimensional sphere $r\s^3$ of radius $r$, in spherical coordinates we get
\[ds^2_{r\s^3}=\frac{r^2}{(1-r^2)^2}dt^2+\frac{r^2\sin^2 t}{(1-r^2)^2+4r^2\sin^2(t)}|dI|^2
\] 
whose corresponding volume form is
\[dVol_{r\s^3}(re^{tI})=\frac{r^3\sin^2t}{(1-r^2)((1-r^2)^2+4r^2\sin^2(t))}dtdA_{\s}(I)\]
where $dA_{\s}$ denotes the area element of the two-dimensional sphere $\s$.
This volume form (after a normalization) induces a volume form on the boundary $\s^3$ of the unit ball: if $u=e^{sJ}\in\s^3$, we have
\begin{equation*}
\begin{aligned}
dVol_{\s^3}(u)&:=\lim_{r\to 1^-}(1-r^2)dVol_{r\s^3}(ru)=\lim_{r\to 1^-}\frac{(1-r^2)r^3\sin^2s}{(1-r^2)((1-r^2)^2+4r^2\sin^2(s))}dtdA_{\s}(I)\\
&=\frac{1}{4}dtdA_{\s}(I).
\end{aligned}
\end{equation*}
Notice that $dVol_{\s^3}$ is the product of the usual spherical metric on the two-dimensional sphere $\s$ with the metric $dt$ on circles $\s^3_I$ which appears in the definition of Hardy spaces given in \cite{hardy}.
Moreover in \cite{hardy} it is proven that any $f \in H^2(\B)$ has radial limit along almost any radius and hence, denoting (with a slight abuse of notation) the radial limit by $f$ itself, we have  
\begin{equation*}
\begin{aligned}
\int_{\s^3}|f(u)|^2dVol_{\s^3}(u)&=\frac{1}{4}\int_{\s}\int_{0}^{\pi}|f(e^{tI})|^2dtdA_{\s}(I)=\frac{1}{8}\int_{\s}\int_{0}^{2\pi}|f(e^{tI})|^2dtdA_{\s}(I)\\
&=\frac{1}{8}\int_{\s}||f||^2_{H^2(\B)}dA_{\s}(I)=\frac{\pi}{2}||f||^2_{H^2(\B)}.
\end{aligned}
\end{equation*}

\subsection{Proof of Theorem \ref{maancheno}}\label{incontri}
\it We begin by showing that $(\BB,m)$ has constant negative curvature. \rm Heuristically, a Riemannian metric $m$ satisfying the assumption of the theorem has an isometry group with dimension
$$
dim(\BB)+dim(\SS^3)+dim(O(3))=4+3+3=10,
$$
which is maximal for a four-dimensional Riemannian manifold. Hence, $(\BB,m)$ has constant curvature. 

More precisely, we show that the isometry group $\III$ acts transitively 
on orthonormal frames, a property which is known to imply constant sectional curvature. Given points $a,b$ in $\BB$ and orthonormal frames $\{e_l(a):\ l=0,1,2,3\}$ and $\{e_l(b):\ l=0,1,2,3\}$ in 
$T_a\BB$ and $T_b\BB$, respectively,
we find an isometry $\varphi$ in $\III$ such that its (real) differential $\varphi_*$ satisfies: $\varphi_{*}[a]e_l(a)=e_l(b)$. 
We in fact exhibit $\varphi$ mapping $a$ to $0$ and such that $\varphi_*$ maps the chosen orthonormal frame in $T_a\B$ to  a fixed orthonormal basis $e_0(0),e_1(0),e_2(0),e_3(0)$ of $T_0\BB$, where $e_0(0)$ is the vector tangent to the positive real half-axis. 
The isometry
$M_a(q)=(1-q\overline{a})^{-*}*(q-a)$ maps $a$ to $0$, hence $(M_a)_*$ sends $e_l(a)$ to a orthonormal frame in $T_0\B$ $e^\prime_l$ for $l=0,\dots,3$. 
For a suitable choice of $u$ with $|u|=1$, the isometry $q\mapsto q\cdot u$ has differential mapping $e^\prime_0$ to $e_0(0)$, 
and $e^\prime_l(0)$ to $e^{\prime\prime}_l(0)$ ($j=1,2,3$). The isometries $q=x+yI\mapsto T_A(q)=x+yA(I)$, $A$
being a fixed element of $O(3)$, all have differentials fixing $e_0(0)$. We can find one mapping $e^{\prime\prime}_l(0)$ to $e_l(0)$ for $l=1,2,3$. The composition 
of these three isometries is the desired isometry $\varphi$.

For $I\in\SS$, consider the subgroup $\III_I$ of $\III$ of the isometries fixing the slice $\BB_I$; which consists of the regular M\"obius maps $M_a$, with $a$ in $\BB_I$, 
and of the maps $q\mapsto q\cdot e^{tI}$. If $\chi_I: x+yI\mapsto x+yi$ is the natural bijection from $\BB_I$ to the unit disc in the complex plane, $\chi_I\III_I\chi_I^{-1}$
identifies $\III_I$ with the usual Poincar\'e group in the complex disc. Hence, the restriction of $m$ to $\BB_I$ is (isometric to) a constant multiple of the 
Poincar\'e metric, which has constant negative curvature.

\it The hyperbolic metric $m$ is realized by the standard Poincar\'e model on the ball $\BB$. \rm
The metric $m$ restricted to $\BB_I$ is realized as $|d|^2_{m(w)}=\lambda^2\frac{|d|^2}{(1-|w|^2)^2}$ (for $w$ in $\BB_I$ and $d$ in $T_w(\BB_I)$), with $\lambda$
which is independent of $I$, since different slices intersect along the real diameter of $\BB$. We might set $\lambda=1$. Each slice $\BB_I$ is totally geodesic, since 
it is the set of the points fixed by an isometry of the type $x+yJ\mapsto x+yB(J)$, where $B$ is an element of $O(3)$ fixing $\pm I$ and no other element of $\SS$.

Then, the radii $r\mapsto ru=\gamma_u(r)$ (with $u$ fixed in $\HH$, $|u|=1$) are (reparametrizations of) geodesics of $(\B,m)$, not just of its restriction to a slice, and the distance function on each of them
is obtained by integrating $dr/(1-r^2)$. The three-dimensional spheres $r\s^3=\{q:\ |q|=r\}$ are then metric spheres centered at $0$ for the metric $m$. 
By Gauss Lemma, they are orthogonal to the curves $\gamma_u$. Fix $r$ in $(0,1)$. An argument similar to the one above shows that 
the subgroup $\III_0$ of the isometries fixing $0$ acts transitively on the bundle of the orthogonal frames at points of $r\s^3$, hence $r\s^3$ is isometric to the usual
three-spheres with a multiple of the spherical metric. Since $L_I\cap r\s^3$ is isometric to a metric one sphere in the Poincar\'e model of the hyperbolic metric 
(in dimension two), for each $I$
in $\SS$, $r\s^3$ is similarly isometric to a metric three-sphere in the Poincar\'e model of the hyperbolic metric (in dimension four). But we said that $r\s^3$ and $\gamma_u$
are orthogonal in their point of intersection; they are both isometric to the corresponding objects in the Poincar\'e model; the sum of their tangent spaces is the whole tangent space:
this shows that the metric $m$ coincides in fact with the Poincar\'e metric.

\it Concluding, \rm we have shown that the hyperbolic Poincar\'e metric is invariant under regular M\"obius maps, but this contradicts a result of Bisi and Stoppato \cite{bisistop}, 
Remark 5.

\section{Metric in the right half space $\HH^+$}\label{destra}
Consider the right half space $\HH^+=\{q\in \HH \, | \, \RRe(q)>0\}$. The Cayley map $C:\B\to \HH^+$,
\[C(q)=(1-q)^{-1}(1+q),\]
is a regular bijection from the quaternionic unit ball onto the quaternionic right half space with regular inverse the function $C^{-1}:\HH^+\to \B$,
\[C^{-1}(q)=(1+q)^{-1}(q-1).\]
The aim of this section is to study the image $(\HH^+, h)$ of $(\B,g)$ under the map $C$, where $h$ is the pullback of the metric $g$ by the map $C^{-1}$. 
 In the introduction we labeled $g$ and $h$ by the same letter, since $C$ is by definition an isometry from $(\BB,g)$ to $(\HH^+,h)$. 
Let $u\in \HH^+$ and let $v=v_1+v_2$ be a tangent vector in $T_u\HH^+\cong L_{I_u}+L_{I_u}^{\perp}$. The length of $v$ with respect to $h$ is 
\[|v|_{h(u)}=\left|(C^{-1})_*[u](v)\right|_{g(C^{-1}(u))}\]
where $(C^{-1})_*[u]$ is the real differential of $C^{-1}$ at the point $u \in \HH^+$. Recalling the decomposition of the real differential of a regular function in terms of its slice and spherical derivatives, if $v=v_1+v_2\in L_{I_u}+L_{I_u}^{\perp}$ we can write
\[ (C^{-1})_*[u](v)=v_1\de C^{-1}(u)+v_2\p_s C^{-1}(u)=v_1\frac{2}{(1+u)^{2}}+v_2\frac{2}{|1+u|^2}.\]
Hence $(C^{-1})_*[u]$ preserves the decomposition $T_u\HH^+=L_{I_u}+L_{I_u}^{\perp}$ and we get
\begin{equation*}
\begin{aligned}
|v|^2_{h(u)}&=\frac{1}{(1-|C^{-1}(u)|^2)^2}\frac{4}{|1+u|^{4}}|v_1|^2+\frac{1}{|1-C^{-1}(u)^2|^2}\frac{4}{|1+u|^4}|v_2|^2\\
&=\frac{1}{4\RRe(u)^2}|v_1|^2+\frac{1}{4|u|^2}|v_2|^2.
\end{aligned} 
\end{equation*}
If $v\in L_{I_u}$ then its length is, not surprisingly, the hyperbolic length in the hyperbolic half plane $\HH^+_{I_u}=\{x+yI_u \, | \, x>0, y\in \rr\}$. 
Notice that $C$ maps $\B_I$ to $\HH^+_I$ for any $I\in \s$ and it maps the totally geodesic submanifold $\B(\pi/2)$ to $\HH^+(\pi/2):=C\left(\B(\pi/2)\right)=\{q\in \HH^+ \, | \, |q|=1\}$ i.e. the right half of the three-dimensional unit  sphere $\s^3$. Then it is not difficult to verify that the isometry group of $(\HH^+,h)$ is generated by the images under $C$ of isometry of $(\B,g)$ of type $(a)$, $(b)$ and $(c)$, 
 \begin{itemize}
  \item[(a')] linear maps preserving the positive real half-axis, 
  $$
  q\mapsto q\lambda,
  $$
  with $\lambda>0$;
  \item[(b')] isometries of the sphere of the imaginary units, which in polar coordinates $r\ge0$, $t\in[0,\pi/2)$, $I\in\s$ read as
  $$
  q=re^{tI}\mapsto T_A(q)=re^{tA(I)},
  $$
  where $A:\SS\to\SS$ is an isometry of $\SS$;
  \item[(c')] the inversion in the three-dimensional unit (half) sphere 
  $$
  q\mapsto \frac{1}{\overline{q}}.
  $$
 \end{itemize}
Acting on $\HH^+(\pi/2)$ by means of isometries of type $(a')$ we obtain totally geodesic regions of the form $\{q\in \HH^+ \, | \, |q|=R\}$ for $R>0$, that can be sliced in totally geodesic two-dimensional submanifolds, corresponding to submanifolds of type $D(t, C(J))$ in the ball case.

In this setting it is possible to introduce {\em horocycles}, i.e. hyperplanes of points with constant real part, $H_c=\{q\in \HH^+ \, | \, \RRe(q)=c\}$ for some constant $c>0$. They deserve the name of horocycles because their intersection with each slice $L_I$ is a proper horocycle in the hyperbolic half plane $\HH^+_I$. Isometries of type $(a')$ map horocycles one into another.

If we restrict the metric $h$ to horocycles we obtain that the length of a vector $v=v_1+v_2 \in T_uH_c\cong \rr I_u + L_{I_u}^{\perp}$ tangent to the horocycle $H_c$ at the point $u\in H_c$, can be written as
\[|v|^2_{H_c}=\frac{1}{4c^2}|v_1|^2+\frac{1}{4(c^2+|\IIm(u)|^2)}|v_2|^2\]
and the corresponding volume form at $u=c+x_1i+x_2j+x_3k$ is
\[dVol_{H_c}(u)=\frac{dVol_{Euc}(u)}{8c(c^2+|\IIm(u)|^2)},
\] 
(since the component in $L_{I_u}$ is  one-dimensional) where $dVol_{Euc}(u)=dx_1dx_2dx_3$ is the standard Euclidean volume element. 
If we want to define a (non-degenerate) volume form $dVol_{\p\HH^+}$ on the boundary $\p \HH^+$ of the quaternionic right half space we can not directly take the limit of $dVol_{H_c}$ as $c$ approaches $0$, we need indeed first to normalize it. For any $u\in \p \HH^+$, we define
\begin{equation}\label{hpiu}
dVol_{\p\HH^+}(u)\!:=\!\!\lim_{c\to 0^+}c \left(dVol_{H_c}(u+c)\right)=\!\lim_{c\to 0^+}\frac{dVol_{Euc}(u+c)}{8(c^2+|\IIm(u+c)|^2)}=\frac{dVol_{Euc}(u)}{8|\IIm(u)|^2}=\frac{dVol_{Euc}(u)}{8|u|^2}.
\end{equation}

\subsection{Hardy space on $\HH^+$}\label{hardypiatto}
We will show that, as in the case of the metric $g$ on $\B$,  the invariant metric $h$ on $\HH^+$ introduced in the previous section 
and in particular the corresponding volume form \eqref{hpiu}, is related with the quaternionic Hardy space on the right half space $\HH^+$. 
It is possible to define the Hardy space $H^2(\HH^+)$ on $\HH^+$ as the space of regular functions $f:\HH^+\to \HH$ of the form, 
\[f(q)=\int_{0}^{+\infty}e^{-\zeta q}F(\zeta)d\zeta,\]
with $F:\rr^+\to \HH$, such that
\[||f||^2_{H^2(\HH^+)}:=\int_{0}^{+\infty}|F(\zeta)|^2d\zeta<+\infty.\]
With this definition, the reproducing kernel of $H^2(\HH^+)$ is a function
\[k(q,w)=k_w(q)=\int_{0}^{+\infty}e^{-\zeta q}G(\zeta)d\zeta\]
where $G:\rr^+\to \HH$ is such that
\[f(w)=\langle f, k_w\rangle_{H^2(\HH^+)}=\int_{0}^{+\infty}\overline{G(\zeta)}F(\zeta)d\zeta=\int_{0}^{+\infty}e^{-\zeta w}F(\zeta)d\zeta.\]
Hence $G$ has to satisfy $\overline{G(\zeta)}=e^{-\zeta w}$ which implies ${G(\zeta)}=e^{-\zeta \bar w}$, i.e. that the kernel function is
\[k_w(q)=\int_{0}^{+\infty}e^{-\zeta q}e^{-\zeta \bar w}d\zeta.\]
To obtain a closed expression of $k_w(q)$, let first $q$ be a (positive) real number. In this case $q$ commutes with all points in $\HH^+$ and we can write
\[k_w(q)=\int_{0}^{+\infty}e^{-\zeta(q+\bar{w})}d\zeta=
\frac{1}{q+\overline w}.
\]
Consider now the function $q\mapsto (q+\overline w)^{-*}$ (here the regular reciprocal is defined with a slight generalization of Definition \ref{invstar}, see \cite{libroGSS}). 
This function is regular and it coincides with $q\mapsto (q+\overline w)^{-1}$ on real numbers. 
Thanks to the Identity Principle for regular functions, Theorem 1.12 in \cite{libroGSS}, we obtain that the reproducing kernel is
$k_w(q)= (w+\overline q)^{-*}=\int_{0}^{+\infty}e^{-\zeta w}e^{-\zeta \bar{q}}d\zeta$.

Another way to obtain the reproducing kernel on $H^2(\HH^+)$ is the following. 
\begin{pro}\label{donatini}
Denote by $k_{H^2(\B)}$ and by $k_{H^2(\HH^+)}$ the reproducing kernels of the Hardy space on the unit ball $H^2(\B)$ and on the right half-space $H^2(\HH^+)$ respectively. Let $C:\BB\to\HH^+$ 
be the Cayley map, $C(q)=(1-q)^{-1}(1+q)$.
For any $z,w\in \HH^+$, the function 
$k_{H^2(\B)}(C^{-1}(w),C^{-1}(z))$ is a rescaling of the reproducing kernel of $H^2(\HH^+)$:
\[ 
k_{H^2(\B)}(C^{-1}(w),C^{-1}(z))=\frac{1}{2}(1+z)k_{H^2(\HH^+)}(w,z)(1+\overline w).\]
\end{pro}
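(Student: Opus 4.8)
The plan is to prove the identity first on the totally real configuration $w,z\in\HH^+\cap\RR$, where every quantity commutes and the $*$-products reduce to ordinary products, and then to propagate it to all of $\HH^+\times\HH^+$ by two successive applications of the Identity Principle (Theorem 1.12 in \cite{libroGSS}). On the reals $C^{-1}$ is the ordinary Cayley transform $q\mapsto(q-1)/(q+1)$, so writing $a=C^{-1}(w)$, $b=C^{-1}(z)$ one computes
\[
1-ab=\frac{(1+w)(1+z)-(w-1)(z-1)}{(1+w)(1+z)}=\frac{2(w+z)}{(1+w)(1+z)},
\]
whence $k_{H^2(\B)}(a,b)=(1-ab)^{-1}=\tfrac12(1+z)\,(w+z)^{-1}\,(1+w)$. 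Since on the reals $k_{H^2(\HH^+)}(w,z)=(w+z)^{-1}$ and $\overline w=w$, this is precisely the asserted identity.

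Next I fix $w$ and regard both sides as functions of $z\in\HH^+$. The left-hand side is $k_{C^{-1}(w)}\circ C^{-1}$ evaluated at $z$; as $C^{-1}$ is slice preserving and regular and $k_{C^{-1}(w)}$ is regular, this composition is regular in $z$. For the right-hand side, $k_{H^2(\HH^+)}(w,z)=(z+\overline w)^{-*}$ is regular in $z$, the factor $1+z$ is slice preserving (so its pointwise product with a regular function coincides with the $*$-product and remains regular), and right multiplication by the constant $1+\overline w$ preserves regularity; hence the right-hand side is regular in $z$ as well. The two regular functions of $z$ agree on $\HH^+\cap\RR=(0,\infty)$, a set with accumulation points, so by the Identity Principle they agree for every $z\in\HH^+$ and every real $w$.

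Finally I fix $z\in\HH^+$ and let $w$ vary. The dependence on the parameter $w$ is anti-regular, so I conjugate: using the Hermitian symmetries $\overline{k_{H^2(\B)}(a,b)}=k_{H^2(\B)}(b,a)$ and $\overline{k_{H^2(\HH^+)}(w,z)}=k_{H^2(\HH^+)}(z,w)$, the conjugate of the left-hand side becomes $k_{C^{-1}(z)}\bigl(C^{-1}(w)\bigr)$ and the conjugate of the right-hand side becomes $\tfrac12(1+w)\,k_{H^2(\HH^+)}(z,w)\,(1+\overline z)$. By the same reasoning as in the previous step both are regular functions of $w$, and by that step they coincide for all real $w$; a second application of the Identity Principle gives equality for all $w\in\HH^+$, and conjugating back yields the proposition. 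I expect the main obstacle to be exactly this regularity bookkeeping: one must keep track of which variable carries genuine regularity and which carries anti-regularity, invoke the composition theorem for slice preserving inner functions, and use that multiplication by the slice preserving factors $1\pm q$ does not destroy regularity. Once these points are secured, the Identity Principle does the rest.
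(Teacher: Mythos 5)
Your proof is correct, but it follows a genuinely different route from the paper's. The paper argues by a single direct computation: it expands the kernel using the closed form of the regular reciprocal, $(1-q\overline a)^{-*}=\left(1-2q\RRe(a)+q^2|a|^2\right)^{-1}(1-qa)$, substitutes the Cayley expressions (legitimate because $C^{-1}$, having real coefficients, is slice preserving), and grinds through several lines of noncommutative algebra until $\frac{1}{2}(1+z)(z+\overline w)^{-*}(1+\overline w)$ emerges. You instead verify the identity on the real half-line, where all factors commute and the check is one line, and then extend by two applications of the Identity Principle, treating the anti-regular variable $w$ via conjugation and the Hermitian symmetry $\overline{k(a,b)}=k(b,a)$ --- a symmetry that does hold for both kernels, abstractly from $\overline{\langle k_a,k_b\rangle}_{\Hi}=\langle k_b,k_a\rangle_{\Hi}$ or directly from the series and integral representations. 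Your method is in fact the one the paper itself employs a few lines before the Proposition to derive the closed form $k_{H^2(\HH^+)}(w,q)=(q+\overline w)^{-*}$ from the real case via Theorem 1.12 of \cite{libroGSS}; the authors simply did not reuse it for the Proposition. The trade-off: the paper's computation is self-contained, needing no structural lemmas, while your argument is shorter and keeps cleaner track of which kernel argument carries regularity and which anti-regularity (the paper's own first displayed line even has a harmless slip in the order of the arguments), at the price of invoking standard facts from \cite{libroGSS} --- composition with a slice preserving regular map preserves regularity, pointwise multiplication by the slice preserving factor $1+z$ agrees with the $*$-product, and right multiplication by the constant $1+\overline w$ preserves regularity, all of which you correctly flag. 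The one step worth making explicit in a final write-up is that $(z+\overline w)^{-*}$ is regular on all of $\HH^+$, since the zero sphere of $(q+\overline w)*(q+w)$ has real part $-\RRe(w)<0$; this guarantees both sides are regular on a common slice domain containing $(0,\infty)$, which is exactly what the Identity Principle requires.
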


\begin{proof}
The map $C^{-1}$, having real coefficients, is slice preserving. Hence, we can compose $k_{H^2(\B)}$ with $C^{-1}$ 
preserving (left) regularity in the first variable and (right) ``anti-regularity'' in the second one. 
We have, then,
\begin{equation*}
\begin{aligned}
& k_{H^2(\B)}(C^{-1}(z),C^{-1}(w))=(1-q \overline{C^{-1}(w)})_{|_{q=C^{-1}(z)}}^{-*}\\
&=\left(1-2C^{-1}(z)\RRe(C^{-1}(w))+C^{-1}(z)^2|C^{-1}(w)|^2\right)^{-1}\left(1-C^{-1}(z)C^{-1}(w)\right)\\
&=(1+z)^2|1+w|^2\left(|1+w|^2(1+z)^2-2(1-z^2)(1-|w|^2)+(1-z)^2|1-w|^2\right)^{-1}\\
&\hskip 4 cm \cdot(1+z)^{-1}\left((1+z)(1+w)-(1-z)(1-w)\right)(1+w)^{-1}\\
&=(1+z)\frac{1}{4}\left(z^2+2z\RRe(w)+|w|^2\right)^{-1}2\left(z+w\right)(1+\overline w)\\
&=\frac{1}{2}(1+z)\left(z+\overline w\right)^{-*}(1+\overline w)=\frac{1}{2}(1+z)k_{H^2(\HH^+)}(w,z)(1+\overline w).\\
\end{aligned}
\end{equation*}
\end{proof}

Now we want to show that the volume form on $\p\HH^+$ obtained in \eqref{hpiu} is the natural volume form for the Hardy space $H^2(\HH^+)$.
In fact, let $f(q)=\int_{0}^{+\infty}e^{-\zeta q }F(\zeta)d\zeta\in H^2(\HH^+)$. For any $I\in \s$, we can decompose the function $F$ as $F(\zeta)=F_1(\zeta)+F_2(\zeta)J$ where $J$ is an imaginary unit orthogonal to $I$ and $F_1,F_2:\rr\to L_I$. It is possible to prove (see \cite{semispazio}) that functions in $H^2(\HH^+)$ have limit at the boundary for almost any point $yI\in \p\HH^+ =\{vJ \ | \ v>0,  J\in \s \}$. If we denote by $dA_{\s}$ the usual surface element of the unit two-dimensional sphere $\s$, thanks to equation \eqref{hpiu} and to the orthogonality of $I$ and $J$, we can write  
\begin{equation*}
\begin{aligned}
&\int_{\p \HH^+}|f(yI)|^2 dVol_{\p \HH^+}(yI)=\int_{\p \HH^+}|f(yI)|^2 \frac{dVol_{Euc}(yI)}{8y^2}\\
&=\int_{0}^{+\infty}\left(\int_{\s}\left|\int_{0}^{+\infty}e^{-\zeta y I}F(\zeta)d\zeta\right|^2\frac{y^2dA_{\s}(I)}{8y^2}\right)dy\\
&=\frac{1}{8}\int_{\s}\left(\int_{0}^{+\infty}\left|\int_{0}^{+\infty}e^{-\zeta y I}\left(F_1(\zeta)+F_2(\zeta)J\right)d\zeta\right|^2dy\right)dA_{\s}(I)\\
&=\frac{1}{8}\int_{\s}\left(\int_{0}^{+\infty}\left|\int_{0}^{+\infty}e^{-\zeta y I}F_1(\zeta)d\zeta+\int_{0}^{+\infty}e^{-\zeta y I}F_2(\zeta)Jd\zeta\right|^2dy\right)dA_{\s}(I)\\
&=\frac{1}{8}\int_{\s}\left(\int_{0}^{+\infty}\left|\int_{0}^{+\infty}e^{-\zeta y I}F_1(\zeta)d\zeta\right|^2dy+\int_{0}^{+\infty}\left|\int_{0}^{+\infty}e^{-\zeta y I}F_2(\zeta)d\zeta\right|^2dy\right)dA_{\s}(I)\\
&=\frac{2\pi}{8}\int_{\s}\left(\int_{0}^{+\infty}\left|F_1(\zeta)\right|^2d\zeta+\int_{0}^{+\infty}\left|F_2(\zeta)\right)|^2d\zeta\right)dA_{\s}(I)\\
\end{aligned}
\end{equation*}
where the last equality is due to the classical Plancherel Theorem.
Therefore, thanks again to the orthogonality of $I$ and $J$, 
\[\int_{\p \HH^+}|f(yI)|^2 dVol_{\p \HH^+}(yI)=\frac{\pi}{4}\int_{\s}\left(\int_{0}^{+\infty}\left|F(\zeta)\right|^2d\zeta\right)dA_{\s}(I)=
\pi^2||f||^2_{H^2(\HH^+)}.\]
\subsection{A bilateral estimate for the distance and an application to inner functions}\label{dentro}
In the right half space model it is easier to prove a bilateral estimate for the distance associated with the invariant metric $h$.
Fix a imaginary unit $I_0$ and define the projection
\begin{equation}\label{dozza}
 \pi:x+yI\mapsto  x+yI_0,
\end{equation}
with $x$ real and $y\ge0$. Let $d_{hyp}$ be hyperbolic distance in $\HH_{I_0}^+=\{x+yI_0:\ x>0,\ y\in\RR\}$: $d_{hyp}$ is the distance associated with the Riemannian metric tensor
$ds_{hyp}^2=(dx^2+dy^2)/(4x^2)$. Let now $d_\SS$ be the usual spherical distance on the unit two-dimensional sphere $\SS$, associated with the metric tensor $ds^2_{\s}$. Then, the metric tensor associated with $h$ can be decomposed as
\[ds_h^2
=ds^2_{hyp}+\frac{y^2}{4(x^2+y^2)}ds^2_{\s}.\]
\begin{teo}\label{bilaterale}
 Let $q_j=x_j+y_jI_j$, $j=1,2$, be points in $\HH^+$: $x_j>0$, $y_j\ge0$. The following estimate for the distance function $d_h$ associated with the metric $h$ holds:
 \begin{equation*} 
 d_h(q_1,q_2)\approx d_{hyp}(\pi(q_1),\pi(q_2))+\min\left\{\frac{y_j}{|q_j|}:\ j=1,2\right\}d_\SS(I_1,I_2),
 \end{equation*}
 where $\approx$ means that we have a lower and an upper estimate for the right hand side in terms of the left hand side,
  with multiplicative constants $C_1,C_2$ independent of $q_1,q_2$.
\end{teo}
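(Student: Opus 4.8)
The plan is to establish the two-sided estimate by producing the upper bound through an explicit two-leg path and the lower bound through a case analysis that exploits the decomposition $ds_h^2=ds_{hyp}^2+\frac{y^2}{4(x^2+y^2)}ds_\SS^2$ recorded just above the statement. Throughout I write $q_j=|q_j|e^{t_jI_j}$ with $t_j\in[0,\pi/2)$, so that $x_j=|q_j|\cos t_j$, $y_j=|q_j|\sin t_j$ and hence $y_j/|q_j|=\sin t_j=:s_j$; I set $s_*=\min\{s_1,s_2\}$ and $\phi=d_\SS(I_1,I_2)\in[0,\pi]$, and I record that in the coordinates $q=re^{tI}$ one has $ds_{hyp}^2=\frac{dr^2+r^2dt^2}{4r^2\cos^2 t}$, while the spherical part of $ds_h^2$ has warping factor $\frac12\sin t$. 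The boundedness $\phi\le\pi$ will be used in an essential way.

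For the upper bound, assume $s_1\le s_2$ (so $s_*=s_1$; otherwise swap the roles of $q_1,q_2$) and join $q_1$ to $q_2$ in two legs. The first leg freezes $(r,t)=(|q_1|,t_1)$ and moves $I$ from $I_1$ to $I_2$ along a spherical geodesic; since the spherical coefficient is locked at $\frac12\sin t_1$, this leg has $h$-length $\frac12 s_1\phi=\frac12 s_*\phi$. The second leg lives in the slice $L_{I_2}$, where $h$ restricts to the hyperbolic metric, and joins $|q_1|e^{t_1I_2}$ to $q_2$; its length is exactly $d_{hyp}(\pi(q_1),\pi(q_2))$, because $\pi(q_1)$ and $\pi(q_2)$ carry the same pair $(r,t)$ as $q_1$ and $q_2$. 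This gives $d_h(q_1,q_2)\le d_{hyp}(\pi(q_1),\pi(q_2))+\frac12 s_*\phi$, which is the upper bound with an absolute constant.

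For the lower bound, fix any path $\gamma(\tau)=r(\tau)e^{t(\tau)I(\tau)}$ from $q_1$ to $q_2$. Projecting by $\pi$ onto the fixed slice drops the nonnegative spherical term, hence does not increase the $h$-length and yields a slice curve of hyperbolic length at least $d_{hyp}(\pi(q_1),\pi(q_2))$; thus $d_h(q_1,q_2)\ge d_{hyp}(\pi(q_1),\pi(q_2))$. It remains to show $d_h(q_1,q_2)\ge c\,s_*\phi$. Write $L_h(\gamma)\ge\max\{H,S\}$, where $H=\int\frac{\sqrt{\dot r^2+r^2\dot t^2}}{2r\cos t}\,d\tau\ge\frac12\,\mathrm{Var}_\tau\big(g(t(\tau))\big)$ with $g(t)=\log(\sec t+\tan t)$, $g'(t)=\sec t$, and $S=\int\frac12\sin t\,|\dot I|_\SS\,d\tau$. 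Introduce the threshold $\sigma=s_*/2$ and let $V_{\mathrm{hi}},V_{\mathrm{lo}}$ be the spherical variations of $I$ accumulated where $\sin t\ge\sigma$ and where $\sin t<\sigma$; since variation dominates distance, $V_{\mathrm{hi}}+V_{\mathrm{lo}}\ge\phi$. If $V_{\mathrm{hi}}\ge\phi/2$, then $S\ge\frac{\sigma}{2}V_{\mathrm{hi}}=\frac{s_*}{4}V_{\mathrm{hi}}\ge\frac{s_*\phi}{8}$. If instead $V_{\mathrm{lo}}\ge\phi/2>0$, the path must reach $\sin t<s_*/2$ while both endpoints satisfy $\sin t\ge s_*$; using $g(\arcsin s)=\operatorname{arctanh}(s)=:G(s)$ with $G'(s)=(1-s^2)^{-1}\ge 1$, this excursion below level $\sigma$ and back forces $\mathrm{Var}_\tau(g(t))\ge 2\big(G(s_*)-G(s_*/2)\big)\ge 2\int_{s_*/2}^{s_*}ds=s_*$, so $H\ge s_*/2\ge\frac{s_*\phi}{2\pi}$. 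In both cases $L_h(\gamma)\ge\frac{s_*\phi}{8}$; taking the infimum over $\gamma$ and combining with the projection bound gives $d_h(q_1,q_2)\ge\frac{1}{16}\big(d_{hyp}(\pi(q_1),\pi(q_2))+s_*\phi\big)$. (If some $y_j=0$ then $s_*=0$ and both bounds collapse to $d_h\approx d_{hyp}$, consistently.)

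The crux, and the only genuinely non-routine step, is the second case of the lower bound: a path can make the rotation of $I$ arbitrarily cheap by descending toward the real axis $\{y=0\}$, where the warping factor $\frac12\sin t$ vanishes, so one cannot control $S$ from below on its own. The real content is that such a descent is paid for in the hyperbolic factor: the identity $g(\arcsin s)=\operatorname{arctanh}(s)$ converts ``reaching small $\sin t$'' into a definite hyperbolic cost $\gtrsim s_*$, independent of how much rotation is performed down there, and because $\phi\le\pi$ this fixed cost already dominates $s_*\phi$. Making this trade-off quantitative and uniform, both as $s_*\to 0$ and as $s_*\to 1$ (where $G'(s)=(1-s^2)^{-1}$ runs from $1$ to $\infty$), is where the care lies; the elementary bound $G(s_*)-G(s_*/2)\ge s_*/2$ settles it uniformly. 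All constants produced ($\frac12,\frac18,\frac1{16},\pi$) are absolute, giving $C_1,C_2$ independent of $q_1,q_2$ as required.
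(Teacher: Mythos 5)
Your upper bound and the projection half of your lower bound coincide with the paper's argument: the same two-leg competitor (rotate $I$ along a spherical geodesic at frozen $(r,t)$, costing $\frac12 s_*\,d_\SS(I_1,I_2)$, then a hyperbolic geodesic in the target slice), and the same observation that discarding the nonnegative spherical term of $ds_h^2$ shows $d_h\ge d_{hyp}(\pi(q_1),\pi(q_2))$. Where you genuinely diverge is the core of the lower bound, the estimate $d_h\gtrsim s_*\,d_\SS(I_1,I_2)$. The paper first uses boundedness of the right-hand side to reduce to $d_{hyp}(\pi(q_1),\pi(q_2))\le1$, then uses dilation invariance and elementary hyperbolic geometry (the ``sixth model'' of \cite{cannon}) to normalize $\pi(\gamma)$ into a unit square $\tilde Q_n$, introduces the point $q_3$ of minimal angle $y_3/|q_3|$ along $\gamma$, and splits according to whether $y_3/|q_3|\ge\frac12\,y_1/|q_1|$; in the bad case it concludes $n=0$ and compares hyperbolic with Euclidean length inside the square. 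You instead argue globally: you split the spherical rotation at the threshold $\sin t=s_*/2$, and in the bad case you convert the excursion below the threshold into hyperbolic cost via the exact antiderivative $\int\sec t\,dt=\operatorname{arctanh}(\sin t)$, with $G'(s)=(1-s^2)^{-1}\ge1$ giving a uniform cost $\ge s_*$, which dominates $s_*\phi$ precisely because $\phi\le\pi$ (the same boundedness the paper exploits, used at a different spot). Your route buys explicit absolute constants and dispenses entirely with the normalization, the square decomposition, and the external hyperbolic-geometry input; the paper's route is coarser in constants but its localization step is a reusable geometric device.

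One step needs a repair, though a short one. Your dichotomy rests on $V_{\mathrm{hi}}+V_{\mathrm{lo}}\ge\phi$, which presumes that $\tau\mapsto I(\tau)$ is a continuous rectifiable curve in $\SS$; but a competitor $\gamma$ may meet the positive real half-axis, where $y=0$, $I$ is undefined, and the imaginary unit can jump at zero spherical cost --- exactly the cheap-rotation scenario you yourself identify as the crux. For such a curve one can have $V_{\mathrm{hi}}=V_{\mathrm{lo}}=0<\phi$, and neither of your two cases triggers. The fix is already contained in your argument: dichotomize instead on whether $\gamma$ stays in $\{\sin t\ge s_*/2\}$ --- in which case $I(\tau)$ is continuous along all of $\gamma$, its full variation is $\ge\phi$, and your Case 1 bound $S\ge\frac{s_*}{4}\phi$ applies --- or $\gamma$ reaches $\{\sin t<s_*/2\}$, in which case your $\operatorname{arctanh}$ estimate yields $H\ge s_*/2$ with no reference to $V_{\mathrm{lo}}$ at all. (The paper's version of the bad case is immune to this issue, since its Euclidean comparison in $\tilde Q_0$ never uses continuity of $I$.) With that one-line adjustment your proof is complete and correct, with the stated absolute constants.
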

\begin{proof} We may suppose that $y_1/|q_1|\le y_2/|q_2|$. 

The upper estimate is elementary. 
Let $\gamma$ be a curve going from $q_1$ to $x_1+y_1I_2 \in \HH_{I_2}^+$ leaving $x=x_1$ and $y=y_1$ fixed, and varying the imaginary unit $I$ only. Suppose, more, 
that $I$ varies along a geodesic on $\SS$, which joins $I_1$ and $I_2$. Then,
$$
\length(\gamma)=\int_{\gamma}\frac{y_1}{2|q_1|}ds_{\s}=\frac{y_1}{2|q_1|}d_\SS(I_1,I_2).
$$
Let now $\delta$ be a hyperbolic geodesic in $L_{I_2}$, joining $x_1+y_1I_2$ and $q_2$: $\length(\delta)=d_{hyp}(\pi(q_1),\pi(q_2))$, which proves the estimate. 

The lower estimate is more delicate.
Let $\gamma$ be a curve in $\HH^+$ joining $q_1$ and $q_2$.
Then,
\begin{equation}\label{bubano}
\length(\gamma)=\int_\gamma ds_h \ge\int_{\pi(\gamma)}ds_{hyp} \ge d_{hyp}(\pi(q_1),\pi(q_2)).
\end{equation}
We have then to show that
\begin{equation}\label{mordano}
\int_\gamma ds_h \gtrsim \frac{y_1}{|q_1|}d_\SS(I_1,I_2).
\end{equation}
Since the right hand side of (\ref{mordano}) is bounded, and we have already proved (\ref{bubano}), it suffices to show that (\ref{mordano}) holds when 
$d_{hyp}(\pi(q_1),\pi(q_2))\le 1$. By elementary hyperbolic geometry, see the ``sixth model'' in \cite{cannon}, 
and using the fact that dilations $p\mapsto\lambda p$ are isometric for 
$\lambda>0$, we can assume that $\pi(q_1)$ and $\pi(q_2)$ both lie in the square $Q_n=\{x+yI_0:\ 1\le x\le2,\ n\le y\le n+1\}\subset L_{I_0}$, for some integer $n\ge0$. 
Consider now $q_3=x_3+y_3I_3$, $y_3\ge0$ the point along $\gamma$ which minimizes $y_3/|q_3|$.
We can assume that $\pi(\gamma)$ (hence, $\pi(q_3)$) is contained in  $\tilde{Q}_n=\{z=x+yI_0:\ x>0,\ y\ge0,\ 1/2\le x\le2,\ n-1/2\le y\le n+3/2\}$, 
otherwise $\length(\gamma)\ge 1$ (which would imply the estimate (\ref{mordano}) we are proving).
Let $t\ge0$ be the angle between the positive real half axis $\RR^+$ and the half line originating at $0$ and passing through $\pi(q_3)$. For $j=1,2,3$:
$$
t_j\approx\sin(t_j)=y_j/|q_j|.
$$
We have two cases. Either $y_3/|q_3|\ge 1/2\cdot y_1/|q_1|$, but then we are done because
$$
\int_\gamma \frac{y}{2|q|}ds^2_{\s} \ge \frac{y_1}{2|q_1|}d_\SS(I_1,I_2).
$$
Or $y_3/|q_3|\le 1/2\cdot y_1/|q_1|$. Then $n=0$, and 
\begin{eqnarray*}
\int_\gamma ds_h&\ge&\length(\pi(\gamma))\gtrsim \max(|\pi(z_1)-\pi(z_3)|,|\pi(z_2)-\pi(z_3)|)\ge |\pi(z_1)-\pi(z_3)|\crcr
&\gtrsim& y_1 \gtrsim\frac{y_1}{|z_1|}d_\SS(I_1,I_2).
\end{eqnarray*}
Overall, $\int_\gamma ds_h\gtrsim \frac{y_1}{|z_1|}d_\SS(I_1,I_2)$, as wished.
\end{proof}

Changing coordinates from the right half plane to the ball, we have the same bilateral estimate in the ball model.
\begin{coro}\label{bilbal}
Let $d_g$ be the invariant distance associated with the metric $g$ in the ball model and let $q_1,q_2$ be points of $\BB$. If $\pi$ is defined as in (\ref{dozza}), then:
  \begin{equation*} 
 d_g(q_1,q_2)\approx d_{hyp}(\pi(q_1),\pi(q_2))+\min\left\{\frac{y_j}{|1-q_j^2|}:\ j=1,2\right\}d_\SS(I_1,I_2).
 \end{equation*}
\end{coro}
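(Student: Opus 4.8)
The plan is to transport the half-space estimate of Theorem~\ref{bilaterale} through the Cayley map. Since $C:(\BB,g)\to(\HH^+,h)$ is by construction an isometry, we have $d_g(q_1,q_2)=d_h(C(q_1),C(q_2))$, so it will suffice to apply Theorem~\ref{bilaterale} to the points $C(q_1),C(q_2)\in\HH^+$ and to rewrite each of the three quantities occurring there in terms of $q_1,q_2\in\BB$.

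The decisive structural fact is that $C$, having real coefficients, is slice preserving, so on each slice it acts by $C(x+yI)=a(x,y)+b(x,y)I$ with $a,b$ real functions independent of $I$. Two consequences follow immediately. First, $C(q_j)=C(x_j+y_jI_j)$ lies again in the slice $L_{I_j}$, so the imaginary unit is unchanged and the factor $d_\SS(I_1,I_2)$ is literally the same in both models. Second, $C$ commutes with the projection $\pi$ of \eqref{dozza}, since $\pi(C(x+yI))=a(x,y)+b(x,y)I_0=C(x+yI_0)=C(\pi(x+yI))$; because the restriction of $C$ to each slice is a hyperbolic isometry (being a global $g$--$h$ isometry that preserves slices, and recalling that $g$ restricts to the Poincar\'e metric on each $\BB_I$), this yields $d_{hyp}(\pi(C(q_1)),\pi(C(q_2)))=d_{hyp}(C(\pi(q_1)),C(\pi(q_2)))=d_{hyp}(\pi(q_1),\pi(q_2))$.

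It will then remain to identify the coefficient $\min\{Y_j/|C(q_j)|\}$ produced by Theorem~\ref{bilaterale}, where $C(q_j)=X_j+Y_jI_j$; this is the only genuine computation. Working in the complex coordinate $z=x+yi$ of a fixed slice, with $C(z)=(1+z)/(1-z)$, one finds $\RRe C(z)=(1-|z|^2)/|1-z|^2$ and $\IIm C(z)=2y/|1-z|^2$, whence
\[
\frac{Y}{|C(q)|}=\frac{2y}{\sqrt{(1-|z|^2)^2+4y^2}}.
\]
The point is the algebraic identity $(1-|z|^2)^2+4y^2=|1-z^2|^2$, checked by expanding both sides (each equals $1-2x^2+2y^2+x^4+2x^2y^2+y^4$). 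Translating back to the quaternion $q=x+yI$, for which $|1-q^2|^2=(1-x^2+y^2)^2+4x^2y^2$, this gives $Y_j/|C(q_j)|=2y_j/|1-q_j^2|$.

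Substituting these three identifications into Theorem~\ref{bilaterale}, the factor $2$ is absorbed into the multiplicative constants hidden in $\approx$, and we obtain
\[
d_g(q_1,q_2)\approx d_{hyp}(\pi(q_1),\pi(q_2))+\min\left\{\frac{y_j}{|1-q_j^2|}:\ j=1,2\right\}d_\SS(I_1,I_2),
\]
as asserted. The main (and essentially only) obstacle is the slice computation of the coefficient, i.e.\ the identity relating $Y/|C(q)|$ to $y/|1-q^2|$; everything else is a formal transport along the isometry $C$ that relies on its slice-preserving character.
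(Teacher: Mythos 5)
Your proposal is correct and follows exactly the route the paper intends: the paper's ``proof'' of Corollary \ref{bilbal} is the single remark that one changes coordinates from the half space to the ball via the Cayley isometry, and your argument supplies precisely the details implicit in that remark (invariance of $d_\SS$ and of the hyperbolic term under the slice-preserving map $C$, plus the slice computation $Y_j/|C(q_j)|=2y_j/|1-q_j^2|$, with the factor $2$ absorbed into $\approx$). All your identities check out, including $(1-|z|^2)^2+4y^2=|1-z^2|^2$ and its quaternionic counterpart $|1-q^2|^2=(1-x^2+y^2)^2+4x^2y^2$.
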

A regular function $f:\BB\to\HH$ is \it inner \rm if (i) it maps $\BB$ into $\BB$; (ii) the limit as $r\to1$ of $f$ along the radius $r\mapsto ru$  exists for 
$a.e.$ $u$ in $\partial\BB$ and it has unitary norm.  
\begin{teo}\label{castelguelfo} Let $f:\BB\to\BB$ be an inner function. Then, $f$ is Lipschitz with respect to the metric $g$
 if and only if it is slice preserving. In this case, it is a contraction.
\end{teo}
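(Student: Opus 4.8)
The plan is to reduce the statement to a pointwise estimate on the real differential of $f$ and then treat the two implications separately. Since $f$ is regular, hence real--analytic, on $\BB$, and $d_g$ is the length metric of the Riemannian metric $g$, the map $f$ is $L$--Lipschitz for $d_g$ if and only if the operator norm of $f_*[w]\colon(T_w\BB,g(w))\to(T_{f(w)}\BB,g(f(w)))$ is bounded by $L$ for every $w$. Writing $w=x+yI$ and decomposing a tangent vector as $d=d_1+d_2\in L_I\oplus L_I^{\perp}$, the decomposition of the real differential in terms of slice and spherical derivatives gives $f_*[w](d)=d_1\de f(w)+d_2\p_s f(w)$, while by \eqref{metricacartesianaone} the $g$--length at $f(w)$ splits according to $L_{I_{f(w)}}\oplus L_{I_{f(w)}}^{\perp}$ with weights $(1-|f(w)|^2)^{-2}$ and $|1-f(w)^2|^{-2}$. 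Everything therefore hinges on how right multiplication by $\de f(w)$ and by $\p_s f(w)$ moves the planes $L_I,L_I^{\perp}$ relative to $L_{I_{f(w)}},L_{I_{f(w)}}^{\perp}$.

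Assume first that $f$ is slice preserving. Then $\de f(w)$ and $\p_s f(w)$ lie in $L_I$, so that $d_1\de f(w)\in L_I=L_{I_{f(w)}}$ and $d_2\p_s f(w)\in L_I^{\perp}=L_{I_{f(w)}}^{\perp}$; the differential preserves the splitting and $|f_*[w](d)|^2_{g(f(w))}$ is diagonal in $(d_1,d_2)$. Contraction then reduces to the two scalar inequalities $\frac{|\de f(w)|}{1-|f(w)|^2}\le\frac{1}{1-|w|^2}$ and $\frac{|\p_s f(w)|}{|1-f(w)^2|}\le\frac{1}{|1-w^2|}$. The first is exactly the Schwarz--Pick inequality for the holomorphic self--map $f_I$ of the disc $\BB_I$ (recall that $g$ restricts to the Poincar\'e metric on each slice). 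For the second I would use that, for slice preserving $f$, the spherical derivative $\p_s f(w)$ is real, so, identifying $L_I\cong\CC$ via $I\leftrightarrow i$, the inequality is equivalent to $H(f_I(z))\le H(z)$ for $H(z)=|\IIm z|/|1-z^2|$. The key computation is that, under the Cayley map $\Theta(z)=(1+z)/(1-z)$ onto the right half--plane, one has $H(z)=\tfrac12|\sin\arg\Theta(z)|$; since $\tilde f=\Theta\circ f_I\circ\Theta^{-1}$ is a holomorphic self--map of the right half--plane fixing the positive real axis and commuting with conjugation, the bounded harmonic function $|\arg\tilde f|$ is dominated on the boundary of the first quadrant by the harmonic function $\arg$, so the maximum principle gives $|\arg\tilde f(\zeta)|\le|\arg\zeta|$, i.e. the argument is compressed. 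Hence both weights contract and $f$ is a contraction.

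For the converse I would argue by contraposition, showing that a non--slice--preserving inner $f$ cannot be Lipschitz because the operator norm of $f_*$ blows up at the boundary. Since $f$ is inner, for a.e.\ $u=e^{sI}\in\p\BB$ with $s\in(0,\pi)$ the radial limits at $u$ and at $\overline u$ exist and $|f^*(u)|=1$; discarding the null set where $f^*(u)=\pm1$, one has along $w_r=ru$ that $\p_s f(w_r)\to P:=(u-\overline u)^{-1}(f^*(u)-f^*(\overline u))$ and $I_{f(w_r)}\to I^*:=I_{f^*(u)}$, with $f^*(u)\ne\pm1$. At such a point the orthogonal weight $|1-w_r^2|^{-2}$ at the source and $|1-f(w_r)^2|^{-2}$ at the target both stay bounded, whereas the slice weight $(1-|f(w_r)|^2)^{-2}$ at the target diverges. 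Feeding in a cheap orthogonal vector $d_2\in L_I^{\perp}$, its image $d_2\p_s f(w_r)$ tends to $d_2P$; if the slice component $\Pi_{L_{I^*}}(d_2P)$ is nonzero for some such $d_2$, the image $g$--length diverges while the source $g$--length stays bounded, contradicting the Lipschitz bound. Thus being Lipschitz forces $L_I^{\perp}P\subseteq L_{I^*}^{\perp}$ at a.e.\ boundary point, and this ``no leakage'' condition is exactly what one must convert back into real Taylor coefficients, i.e. into slice preservation; combined with the backward direction this proves the theorem.

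The routine inputs above are the slice computations of $\de f$, $\p_s f$ and of $H\circ\Theta^{-1}$, which I would relegate to short lemmas. The main obstacle is the structural step in the converse: showing rigorously that failure of slice preservation produces genuine leakage $L_I^{\perp}P\not\subseteq L_{I^*}^{\perp}$ on a positive--measure set of good boundary points. This requires combining the Representation Formula for the boundary values $f^*(e^{sJ})=\beta(s)+J\gamma(s)$ with the Identity Principle to propagate a boundary condition back to the coefficients, while carefully handling that inner functions possess radial limits of modulus one only almost everywhere; the bilateral distance estimate of Corollary \ref{bilbal} is the natural tool for packaging these a.e.\ boundary statements into the distance $d_g$.
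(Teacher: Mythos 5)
Your reduction of Lipschitz continuity to a pointwise bound on $f_*[w]$, and the entire backward implication, are correct. For slice preserving $f$ the differential is diagonal with respect to $L_{I}\oplus L_I^{\perp}$, the slice weight contracts by Schwarz--Pick, and your half-plane computation proving $H(f_I(z))\le H(z)$ for $H(z)=|\IIm z|/|1-z^2|$ is precisely the verification that the paper compresses into ``from the expression \eqref{metricapolare} \dots one easily deduces'': the coefficient of $|dI|^2$ in \eqref{metricapolare} is exactly $H(z)^2$, and your argument-compression lemma ($|\arg\tilde f(\zeta)|\le|\arg\zeta|$ for a conjugation-symmetric self-map of the half plane preserving the positive reals, via the maximum principle on the quadrant) is a clean way to establish it. In this half you are, if anything, more detailed than the paper.

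The forward implication, however, has a genuine gap, and it sits exactly where you flag it. Your blow-up argument yields only the pointwise ``no leakage'' condition $L_I^{\perp}P\subseteq L_{I^*}^{\perp}$ at a.e.\ good boundary point, and this condition cannot by itself be converted into slice preservation. Writing the boundary values via the Representation Formula as $f^*(e^{tJ})=\beta(t)+J\gamma(t)$, one computes $P=\gamma(t)/\sin t$, so wherever $\gamma(t)=0$ the condition is vacuous. In particular the degenerate alternative $\gamma\equiv0$ a.e.\ (boundary values independent of the imaginary unit) satisfies no-leakage everywhere while not being slice preserving on its face; the paper excludes precisely this case by a separate global argument --- such an $f$ has image of real dimension at most $2$, hence is constant by the open mapping theorem, hence not inner --- and nothing in your sketch plays that role. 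Two further facts are asserted but not proved: that the set where $f^*(u)=\pm1$ is null (this needs a Privalov-type positive-measure uniqueness statement, obtainable on slices through the Splitting Lemma \ref{split} and a Fubini argument, but it is not automatic), and, most importantly, the propagation from the a.e.\ algebraic condition on $(\beta,\gamma,I^*)$ back to real Taylor coefficients, which you yourself name as the main obstacle and leave open; it is the substantive half of the theorem, not a routine lemma.

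For comparison, the paper's mechanism is structurally different and bypasses the differential entirely: Lemma \ref{movesil}, proved from the bilateral distance estimate of Corollary \ref{bilbal}, shows that a Lipschitz map sends radii with the same polar angle $t$ to boundary limits $e^{sJ}$ with the \emph{same} $s$; feeding this into the Representation Formula forces $c(t)\in\RR$ and $|b_1(t)+c(t)|=|b_1(t)-c(t)|$, the case $c\equiv0$ is killed by openness, and on the positive-measure set where $b_1=0$ the Splitting Lemma gives a holomorphic component $G$ vanishing on a boundary set of positive measure, hence $G\equiv0$ and $f$ is slice preserving. Until you supply (i) the treatment of $\gamma\equiv0$, (ii) the nullity of $\{f^*=\pm1\}$, and (iii) the conversion of no-leakage into slice preservation, your forward direction is a program rather than a proof.
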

 It is well known (see \cite{hardy}) that regular, bounded functions have radial limits along almost all radii,
 $$
f(e^{tI}):= \lim_{r\to1}f(re^{tI})
 $$
 exists for $a.e.$ $(t,I)\in[0,\pi]\times\SS$. 
 We start with a Lemma which might have independent interest; for instance, it provides a different route to  prove the classification of the isometries for the metric $g$.
\begin{lem}\label{movesil}
If $\varphi:\BB\to\BB$ is Lipschitz with respect to the metric $g$ and  
\begin{equation}\label{caterina}
\lim_{r\to1}\varphi(re^{tI_1})=e^{sJ_1 }\in\partial\BB
\end{equation}
exists, with $s\in[0,\pi]$ and $J_1\in\SS$; then for each $I_2$ in $\SS$, if the limit $\lim_{r\to1}\varphi(re^{I_2 t})$ exists, then 
\begin{equation}\label{sforza}
\lim_{r\to1}\varphi(re^{tI_2 })=e^{sJ_2 }
\end{equation}
for some  $J_2$ in $\SS$. The values of $t$ and $s$ in (\ref{sforza}) are the same as in (\ref{caterina}).
\end{lem}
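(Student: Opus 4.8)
The plan is to exploit the bilateral estimate of Corollary~\ref{bilbal}, whose key feature is that two points of $\B$ sharing the same modulus and the same spherical angle $t$ stay at \emph{bounded} $g$-distance as they approach $\partial\B$, since they project to the \emph{same} point of a fixed slice. Only $t\in(0,\pi)$ requires work: for $t\in\{0,\pi\}$ the radius $r\mapsto re^{tI}$ does not depend on $I$ and the claim is vacuous. So I fix $t\in(0,\pi)$ and set $\gamma_j(r)=re^{tI_j}$, $j=1,2$. Both points have real part $r\cos t$ and imaginary modulus $r\sin t$, hence $\pi(\gamma_1(r))=\pi(\gamma_2(r))=re^{tI_0}$ and the hyperbolic term in Corollary~\ref{bilbal} vanishes. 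Since $|1-\gamma_j(r)^2|=|1-r^2e^{2tI_j}|=\sqrt{(1-r^2\cos2t)^2+r^4\sin^22t}$ depends only on $r,t$ and tends to $2\sin t$ as $r\to1$, the spherical coefficient satisfies
\[
\frac{r\sin t}{|1-r^2e^{2tI}|}\longrightarrow\frac12\qquad(r\to1),
\]
so $d_g(\gamma_1(r),\gamma_2(r))$ is bounded as $r\to1$, in agreement with Lemma~\ref{amaranto}.

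Next I transfer this boundedness to the images through the Lipschitz hypothesis: if $L$ is a Lipschitz constant for $\varphi$, then $d_g(\varphi(\gamma_1(r)),\varphi(\gamma_2(r)))\le L\,d_g(\gamma_1(r),\gamma_2(r))\le C$ uniformly in $r$. Now I apply the \emph{lower} bound of Corollary~\ref{bilbal}, namely $d_g\gtrsim d_{hyp}(\pi(\cdot),\pi(\cdot))$, to deduce that the slice-projections of the two image curves remain at bounded hyperbolic distance,
\[
d_{hyp}\bigl(\pi(\varphi(\gamma_1(r))),\,\pi(\varphi(\gamma_2(r)))\bigr)\le C'.
\]
Because $\pi(q)=\RRe(q)+|\IIm(q)|I_0$ is continuous and $\varphi(\gamma_1(r))\to e^{sJ_1}$ by \eqref{caterina}, we get $\pi(\varphi(\gamma_1(r)))\to e^{sI_0}$, a point of $\partial\B_{I_0}$. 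Writing $\beta:=\lim_{r\to1}\varphi(\gamma_2(r))$, which exists by assumption, continuity gives $\pi(\varphi(\gamma_2(r)))\to\pi(\beta)$.

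The decisive ingredient is then an elementary fact about the Poincar\'e disc $\B_{I_0}$: if $u_r\to\zeta\in\partial\B_{I_0}$ while $\sup_r d_{hyp}(u_r,v_r)<\infty$, then $v_r\to\zeta$ as well. I would prove it by passing to the pseudo-hyperbolic distance $\rho=\tanh d_{hyp}\le\kappa<1$ and estimating $|u_r-v_r|\le\frac{\kappa}{1-\kappa}(1-|u_r|^2)\to0$. Applied with $u_r=\pi(\varphi(\gamma_1(r)))$, $v_r=\pi(\varphi(\gamma_2(r)))$ and $\zeta=e^{sI_0}$, this forces $\pi(\beta)=e^{sI_0}=\cos s+\sin s\,I_0$. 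Since $\pi$ sends interior points to interior points, $\beta$ must lie on $\partial\B$; and $\RRe(\beta)=\cos s$, $|\IIm(\beta)|=\sin s$ say exactly that $\beta=\cos s+\sin s\,J_2=e^{sJ_2}$ for some $J_2\in\SS$. This is \eqref{sforza}, with the same $t$ (trivially) and the same $s$ as in \eqref{caterina}.

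I expect the only real obstacle to be this last hyperbolic-geometry step, which must do double duty: it both pins $\pi(\beta)$ to the single boundary point $e^{sI_0}$ and rules out that $\beta$ is interior. The pseudo-hyperbolic estimate above handles both at once, since it simultaneously yields $|\pi(\varphi(\gamma_1(r)))-\pi(\varphi(\gamma_2(r)))|\to0$ and confines the limit to $\partial\B_{I_0}$. Everything else is a direct reading of the bilateral estimate together with the continuity of the slice projection $\pi$.
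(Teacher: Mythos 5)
Your proof is correct and takes essentially the same route as the paper's: bound $d_g(re^{tI_1},re^{tI_2})$ uniformly in $r$ via the bilateral estimate of Corollary \ref{bilbal}, transfer this through the Lipschitz hypothesis, project to the slice $L_{I_0}$ using the lower bound $d_g\gtrsim d_{hyp}(\pi(\cdot),\pi(\cdot))$, and conclude from the fact that a sequence at bounded hyperbolic distance from a sequence converging to a boundary point must converge to the same boundary point. Your pseudo-hyperbolic estimate $|u_r-v_r|\le\frac{\kappa}{1-\kappa}(1-|u_r|^2)$ simply makes explicit the step the paper dispatches as ``elementary hyperbolic geometry.''
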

\begin{proof}
Let $u_j=e^{tI_j}$, with the same $t\in[0,\pi]$. By Lipschitz continuity,
\begin{eqnarray}\label{emanuella}
 d(\varphi(ru_1),\varphi(ru_2))&\lesssim& d(ru_1,ru_2)\crcr
 &\approx&\frac{rt|I_1-I_2|}{(1-r)+rt}\lesssim|I_1-I_2|\crcr
 &\le&1.
\end{eqnarray}
By the lower estimate in Corollary \ref{bilbal} and (\ref{emanuella}), 
$$
d_{hyp}\left(\pi\left(\varphi(ru_1)\right),\pi\left(\varphi(ru_2)\right)\right)\lesssim1.
$$
But this and elementary hyperbolic geometry imply that, if $\lim_{r\to1}\varphi(ru_1)=e^{sJ_1}$, then the limit $\lim_{r\to1}\pi\left(\varphi(ru_2)\right)=L$ exists and $L=e^{sI_0}$
(recall that $\pi:\BB\to L_{I_0}$). Since $\lim_{r\to1}\varphi(re^{t I_2 })=L$ exists by hypothesis and $\pi$ is continuous, it must be $\pi(L)=e^{sI_0}$, then
$L=e^{sJ_2}$ for some $J_2$ in $\SS$.

\end{proof}


The statement of Lemma \ref{movesil}  can be sharpened in several ways. For instance, the Lipschitz assumption might be weakened to a sub-exponential growth assumption.

 We proceed with the proof of Theorem \ref{castelguelfo}.
\begin{proof}[Proof of Theorem \ref{castelguelfo}] Being inner, $f$ has boundary limits along radii $r\mapsto re^{tI}$ for $a.e.$ $I$ in $\SS$ and $t$ in $[0,\pi]$. We write
for such couples of $(t,I)$: $f(e^{tI}):=\lim_{r\to1}f(re^{tI})$. 
We can assume without loss of generality that the limit exists for two antipodal imaginary units $L$ and $-L$, and hence, in view of the Representation Formula \ref{RF}, for any $L\in\s$.
If $f$ is regular and Lipschitz with respect to the distance $d_g$, thanks on the one hand to the Representation Formula \ref{RF}, on the other hand to Lemma \ref{movesil}, we have that for any $I\in\s$
\begin{equation*}
 b(t)+Ic(t)=f(e^{tI})=e^{s(t)J(s,I)}
\end{equation*}
where 
$b(t),c(t)\in \HH$, and $s(t)\in [0,2\pi]$ and $J(t,I)\in\s$.
Then $\RRe(f(e^{tI}))=\RRe(f(e^{tL}))$ for any $L\in\s$ and in particular for $L=-I$, which gives
\[\RRe(b(t))-\langle I, c(t)\rangle= \RRe(b(t)+Ic(t))=\RRe(b(t)-Ic(t))=\RRe(b(t))+\langle I, c(t)\rangle\]
(where $\langle \cdot, \cdot \rangle$ denotes the standard scalar product in $\rr^4$).
Since $I$ is any imaginary unit, we necessarily have that $c(t)\in \rr$.

Also, comparing imaginary parts, for any $L_1,L_2\in\s$ we have $|\IIm(f(e^{tL_1}))|=|\IIm(f(e^{tL_2}))|$. Then, if $b=b_0+b_1 K$ with $b_0,b_1 \in \rr$, $K\in\s$  (omitting the dependence on $t$),  when $L_1=K$ and $L_2=-K$ we get
\[|b_1+c|=|\IIm(b_0+b_1K+cK)|=|\IIm(f(e^{tK}))|=|\IIm(f(e^{-tK}))|=|\IIm(b_0+b_1K-cK)|=|b_1-c|.\]
Therefore almost every $t\in [0,\pi]$ belongs to $D\cup E$:
$$
D=\{t:\ c(t)=0\},\ E=\{t:\ b_1(t)=0\}.
$$
Consider first the case when $t\in D$ holds $a.e.$. Then $f(e^{tI})=b(t)$ for almost every $t$. Since boundary values uniquely identify $f$ (see \cite{hardy}) 
and by invariance under rotations of $\SS$,
we deduce that 
$$
f(re^{tI})=\Phi(r,t),
$$
for some function $\Phi$. In particular, $f$ can not be open ($dim(f(\BB))\le2$), hence (see Theorem 7.4 in \cite{libroGSS}) it must be constant; thus it is not inner.
Then $E$ has positive measure. For $t$ in $E$, 
$$
 b(t)+Ic(t)=f(e^{tI})=e^{s(t)J(t,I)}
$$
with $b$ and $c$ real valued, hence $J=I$:
\begin{equation}\label{angelica}
f(e^{tI})=e^{s(t)I}
\end{equation}
for $t$ in $F$. 
By the Splitting Lemma \ref{split}, if $J\perp I$ is fixed in $\SS$, then there are holomorphic functions $F,G$ on $\B_I$ such that
$$
f(re^{\tau I})=F(re^{\tau I})+G(re^{\tau I})J.
$$
By (\ref{angelica}), $G(e^{tI})=0$ for $t$ in $E$. Since $E$ has positive measure, this implies that $G$ vanishes identically and hence $f(re^{\tau I})=F(re^{\tau I })$
for all $0\le r<1$ and $0\le \tau \le \pi$. That is, $f$ is slice preserving.

We have to verify that $f$ is a contraction with respect to the metric $g$, and this can be verified at the infinitesimal level. Let $q$ be a point in a fixed slice $\BB\cap L_I$. 
(i) Since $f$ is slice preserving, its restriction to $\BB\cap L_I$ is an inner function in the one dimensional sense, hence it is a contraction of the 
Poincar\'e-hyperbolic metric on $\BB\cap L_I$. (ii) On the other hand, preserving the slices, $f$ acts isometrically in the $\SS$ variables, with respect to the spherical metric on $\SS$. 
(iii) Now, the space tangent to $\BB\cap L_I$ at $q$ and the space tangent to $\RRe q+\SS$ at $q$ form an orthogonal decomposition, with respect to the metric $g$, 
of the space tangent to $\BB$. From the expression for $g$ given in (\ref{metricapolare}) and facts (i)-(iii) one easily deduces that $g$ is a contraction.
\end{proof}

{Nicola Arcozzi\\ 
\normalsize Dipartimento di Matematica, Universit\`a di Bologna \\ 
\normalsize Piazza di Porta San Donato 5, 40126 Bologna, Italy,  nicola.arcozzi@unibo.it \\
\and Giulia Sarfatti \\ 
\normalsize Dipartimento di Matematica, Universit\`a di Bologna \\ 
\normalsize Piazza di Porta San Donato 5, 40126 Bologna, Italy,  giulia.sarfatti@unibo.it}

\end{document}